\newcolumntype{C}[1]{>{\centering\let\newline\\\arraybackslash\hspace{0pt}}m{#1}}
\newtheorem{theorem}[equation]{Theorem}
\newtheorem{lemma}[equation]{Lemma}
\newtheorem{claim}[equation]{Claim}
\newtheorem{corollary}[equation]{Corollary}
\newtheorem{remark}[equation]{Remark}
\newtheorem{example}[equation]{Example}
\newtheorem{setup}[equation]{Setup}
\renewcommand{\emph}{\textbf}
\def\df={\stackrel{\rm def}=}
\let\onto\twoheadrightarrow
\let\into\hookrightarrow
\let\sheaf\mathcal
\let\ideal\mathfrak
\let\tensor\otimes
\newcommand{\im}{\operatorname{im}}
\newcommand{\id}{\mathord{\text{id}}}
\newcommand{\unit}{{\scriptstyle\times}}
\newcommand{\fchar}{\operatorname{char}}
\newcommand{\divisor}{\operatorname{div}}
\newcommand{\cor}{\operatorname{cor}}
\newcommand{\res}{\operatorname{res}}
\newcommand{\ind}{\operatorname{ind}}
\newcommand{\inv}{\operatorname{inv}}
\newcommand{\isoto}{\stackrel{\approx}{\to}}
\newcommand{\Br}{\operatorname{Br}}
\newcommand{\Div}{\operatorname{Div}}
\newcommand{\Gal}{\operatorname{Gal}}
\newcommand{\Hom}{\operatorname{Hom}}
\newcommand{\Pic}{\operatorname{Pic}}
\newcommand{\Proj}{\operatorname{Proj}}
\newcommand{\Spec}{\operatorname{Spec}}
\newcommand{\Frac}{\operatorname{Frac}}
\newcommand{\FF}{\mathbb F}
\newcommand{\GG}{\mathbb G}
\newcommand{\NN}{\mathbb N}
\newcommand{\PP}{\mathbb P}
\newcommand{\QQ}{\mathbb Q}
\newcommand{\ZZ}{\mathbb Z}
\newcommand{\XX}{\mathscr{X}}
\begin{document}



\title{Cyclicity and indecomposability in the Brauer group of a $p$-adic curve}
\author{Eduardo Tengan}
\address{Universidade Federal de Santa Catarina\\
  Florianópolis, SC, Brazil}
\email{etengan@planetmail.com}
\keywords{Brauer group, $p$-adic curves, cyclic algebras}
\subjclass[2010]{16K (primary), 14G (secondary)}

\begin{abstract}
  For a $p$-adic curve $X$, we study conditions under which all
  classes in the $n$-torsion of $\Br(X)$ are $\ZZ/n$-cyclic.  We show
  that in general not all classes are $\ZZ/n$-cyclic classes.  On the
  other hand, if $X$ has good reduction and $n$ is prime to $p$, of if
  $X$ is an elliptic curve over $\QQ_p$ with split multiplicative
  reduction and $n$ is a power of $p$, then we prove that all order
  $n$ elements of $\Br(X)$ are $\ZZ/n$-cyclic.  Finally, if $X$ has
  good reduction and its function field $K(X)$ contains all $p^2$-th
  roots of $1$, we show the existence of indecomposable division
  algebras over $K(X)$ with period $p^2$ and index $p^3$.
\end{abstract}

\maketitle


\section{Introduction}

Let $p$ be a prime.  By a \emph{$p$-adic curve} $X$ we mean a smooth
geometrically connected projective curve over some finite extension
$K$ of $\QQ_p$.  Let $K(X)$ be the function field of $X$.  Division
algebras over $K(X)$ have been extensively studied by many different
authors (see for instance \cite{Saltman-p-adic},
\cite{Saltman-p-adic-c}, \cite{HHK}, \cite{BMT},
\cite{ParimalaSuresh}, \cite{BMT2}, among others).  In this paper, we
focus on two different aspects about $K(X)$-division algebras:
cyclicity and indecomposability.  We will be particularly interested
in classes of Azumaya algebras over $X$, classified by the Brauer
group of $X$.  As much as possible, we will try to tackle the
``thorny'' part of $\Br(K(X))$, namely its $p$-primary component, for
which fewer techniques are available, and much less is known about.

Let $F$ be any field, let $n \in \NN$ be prime to $\fchar F$, and
write $\Br(F)[n]$ for the $n$-torsion of the Brauer group $\Br(F)$ of
$F$.  A central simple $F$-algebra is called \emph{$\ZZ/n$-cyclic} if
it contains a maximal $\ZZ/n$-cyclic Galois étale subalgebra.  Then
its class in $\Br(F)[n]$ can be written as the cup product of an
element in $H^1(F, \mu_n) = F^\unit/n$ and a character in
$H^1(F, \ZZ/n)$; we call such classes $\ZZ/n$-cyclic as well.  Cyclic
algebras are the simplest amongst all central simple algebras, and it
is known that for a local or global field $F$ all classes in
$\Br(F)[n]$ are $\ZZ/n$-cyclic.

For the function field $K(X)$ of a $p$-adic curve $X$, it is known
that not all elements in $\Br(K(X))[n]$ are $\ZZ/n$-cyclic (see the
appendix of \cite{Saltman-p-adic}, by W.~Jacob and J.-P.~Tignol, for a
counter-example when $n=2$ and $p\ne 2$).  In \cite{BMT2} it was shown
that, for $n$ prime to $p$, any class in $\Br(K(X))[n]$ is the sum of
at most two $\ZZ/n$-cyclic classes.  That result led the author to
consider the question of whether all classes in the subgroup
$\Br(X)[n]$ of $\Br(K(X))[n]$ are $\ZZ/n$-cyclic or not.  It is not
difficult to show that if $X$ has good reduction (see
subsection~\ref{sec:integral-models} for the definition) and $n$ is
prime to $p$ then all classes in $\Br(X)[n]$ are $\ZZ/n$-cyclic; this
is done in theorem~\ref{thm:main}.  However, the answer to the above
question is negative in general: in
section~\ref{sec:non-cyclic-division}, for almost all
$p\equiv 1 \pmod 3$ we show the existence of a $p$-adic curve $X$ of
genus~10 having bad reduction and a class in $\Br(X)[3]$ that is not
$\ZZ/3$-cyclic.  Since we believe this construction to be of interest
on its own, we made an extra effort to identify and isolate the main
points that make it work, proving results in a bit more generality
than actually needed.

In the last two sections, we obtain results that also work for the
$p$-primary part of $\Br(K(X))$.  In section~\ref{sec:tate-curves}, we
study cyclicity for the Brauer group of an elliptic curve $E$ with
split multiplicative reduction.  Using Tate's $p$-adic uniformization
and Lichtembaum's duality, we show in theorem~\ref{thm:tatecurves}
that all classes in $\Br(E)[n]$ are $\ZZ/n$-cyclic if $n$ is either
prime or satisfies $\gcd(n, |\mu(K)|)=1$, where $\mu(K)$ is the group
of roots of unity in $K^\unit$.  As an interesting corollary, we get
that if $E$ is defined over $\QQ_p$ then all classes in $\Br(E)[p^r]$
are $\ZZ/p^r$-cyclic $(r\ge 1)$.

For the last result, recall that an $F$-division algebra is
\emph{indecomposable} if it cannot be expressed as the tensor product
of two nontrivial $F$-division algebras.  All division algebras of
equal period and index are indecomposable, while division algebras of
composite period are decomposable, so the problem of producing an
indecomposable division algebra is only interesting when the period
and index are unequal prime powers.  Albert constructed decomposable
division algebras of unequal (2-power) period and index in the 1930's,
but indecomposable division algebras of unequal period and index
appeared for the first time only in the late 1970's, in the papers
\cite{Saltman-Indecomposable} and \cite{ART}.  Since then there have
been several constructions, including \cite{Tignol}, \cite{JW},
\cite{J_2}, \cite{SvdB}, \cite{Karpenko}, \cite{Brussel6}, and
\cite{McKinnie}.  For a $p$-adic curve $X$ with good reduction,
indecomposable algebras over $K(X)$ were constructed in \cite{BMT} for
several combinations of (prime power) period/indices that are not
multiples of $p$.  In the last section, assuming $K$ contains all
$p^2$-th roots of unity, we construct indecomposable algebras over
$K(X)$ of period $p^2$ and index $p^3$ for any $p$-adic curve $X$ with
good reduction (theorem~\ref{thm:indec-algebr}).  Here, in contrast
with the tame case treated in \cite{BMT}, we do not have general
lifting theorems at our disposal, which unfortunately constrains our
methods to the above period $p^2$ and index $p^3$.  However, given the
paucity of results addressing the $p$-primary part of $\Br(K(X))$, we
still believe this particular construction to be of interest, as the
methods here employed may help future investigations of related
questions.


\section{Notation and setup}\label{sec:setup}

Throughout this paper, $p$ will be a fixed prime, and
\begin{itemize}
\item $K$ will be a $p$-adic field, i.e., a finite extension of the
  field of $p$-adic numbers $\QQ_p$;
  
\item $R$ will be the ring of integers of $K$, $\pi\in R$ will be a
  uniformizer, and $k = R/(\pi)$ will be its residue field (a finite
  field of characteritic $p>0$);

\item $\overline K$ will be an algebraic closure of $K$, and
  $G_K \df= \Gal(\overline K/K)$ will be its absolute Galois group;
  
\item $s = \Spec k$ and $\eta = \Spec K$ will be the closed and
  generic points of $\Spec R$;
  
\item $X$ will be a smooth geometrically connected projective curve
  over $\eta=\Spec K$.  We write $K(X)$ for its function field, and
  $X_0$ for the set of its closed points.
\end{itemize}
For each closed point $x\in X_0$, $\sheaf{O}_{X, x}$ is a discrete
valuation ring, and we denote by
\begin{itemize}
\item $v_x\colon K(X) \to \ZZ \cup \{\infty\}$ the associated discrete
  valuation;

\item $\kappa(x)$ its residue field (a $p$-adic field);

\item $\widehat{K(X)}_x$ the completion of $K(X)$ with respect to
  $v_x$.
\end{itemize}
We will still write
$v_x\colon \widehat{K(X)}_x \to \ZZ \cup \{\infty\}$ for the discrete
valuation induced by the original valuation $v_x$ on $K(X)$.

If $Z$ is a scheme over $\Spec A$ and $B$ is an $A$-algebra, we write
$Z(B)$ for the set of $(\Spec B)$-valued points of $Z$, and
$Z\tensor_A B$ for the fibered product $Z \times_{\Spec A} \Spec B$.
If $Z$ is defined over $\Spec R$, we denote by
$$Z_s\df= Z \times_{\Spec R} s = Z\tensor_R k
\qquad\text{and}\qquad Z_\eta\df= Z \times_{\Spec R} \eta = Z\tensor_R
K$$ its special and generic fibers, respectively.

For any abelian group $G$, we respectively write $G[n]$ and $G/n$ for
the kernel and cokernel of the multiplication-by-$n$ map
$G \stackrel{n}{\to} G$.  Moreover, if $G$ is a topological abelian
group, we denote by $G^\vee \df= \Hom_c(G, \QQ/\ZZ)$ its Pontryagin
dual, i.e., the group of all continuous morphisms
$\chi \colon G \to \QQ/\ZZ$ (here $\QQ/\ZZ$ is endowed with the
discrete topology).

Unless otherwise stated, all cohomology groups will either be Galois
or étale cohomology groups.  For any field $F$ and $n\in \NN$ not
divisible by $\fchar F$, we write $\mu_n$ for the group (or Galois
module or étale sheaf) of all $n$-th roots in the separable closure
$F^{\rm sep}$ of $F$.  We also write
$\delta_n \colon F^\unit \to H^1(F, \mu_n)$ for the connecting map
relative to the Kummer sequence
$1\to \mu_n \to F_{\rm sep}^\unit \stackrel{n}{\to} F_{\rm sep}^\unit
\to 1$.  If $n$ is clear from the context, we drop it and simply write
$\delta$ instead.


\section{Some auxiliary facts}

In this section, we recall some facts that will be used in the proofs
of the main results.  We advise the reader to skip this section,
coming back as needed.

\subsection{Ramification}

Let $\hat K$ be a discretely valued complete field.  Let
$v\colon \hat K \to \ZZ \cup \{\infty\}$ be the associated
(normalized) valuation, and $F$ be its residue field.  There is a
split exact sequence (\cite{SerreLF}, XII.\S3, corollary to
proposition~4, p.186)
\begin{center}
  \begin{tikzcd}
    0\arrow{r} & \Br(F) \arrow{r}{\inf} & \Br(\hat K)' \arrow{r}{\partial_v} & H^1(F, \QQ/\ZZ) \arrow{r}& 0
  \end{tikzcd}
\end{center}
where
$\Br(\hat K)' = \ker \bigl( \Br(\hat K)\to \Br(\hat K_{\rm nr})
\bigr)$ (here $\hat K_{\rm nr}$ denotes the maximal unramified
extension of $\hat K$).  In case $F$ is perfect
$\Br(\hat K)' = \Br(\hat K)$, and if $\fchar F\nmid n$ then
$\Br(\hat K)'[n] = \Br(\hat K)[n]$.  The map $\partial_v$ is called
\emph{ramification} or \emph{residue} map with respect to $v$.
Intuitively, $\partial_v$ can be thought of as ``valuation for
cohomology'', so that the inflation map identifies $\Br(F)$ with the
subgroup of ``unramified classes'' in $\Br(\hat K)'$.  By choosing a
uniformizer $\pi_v\in \hat K$, the map
$\chi\mapsto \delta_n(\pi_v) \cup \inf(\chi)$ yields a section of
$\partial_v$ for the $n$-torsion of the above sequence, and hence a
(non-canonical) isomorphism
\begin{align}
  \Br(F)[n] \oplus H^{1}(F, \ZZ/n\ZZ)&\isoto  \Br(\hat K)'[n]\nonumber\\
  (\alpha, \chi)&\mapsto \inf(\alpha) + \delta_n(\pi_v) \cup \inf(\chi)
\end{align}
which will be refered to as \emph{Witt decomposition} of
$\Br(\hat K)'[n]$.  By abuse of notation, we will usually omit the
inflation maps, writing $\alpha + \delta \pi_v \cup \chi$ instead.

It is fairly easy to compute the index of elements in
$\Br(\hat K)'[n]$, thanks to (see \cite{JW}, theorem~5.15, p.161 for a
proof)

\begin{lemma}[Nakayma's index formula]\label{lemma:NAK}
  In the above notation, let $n\in \NN$ and $\beta\in \Br(\hat K)'[n]$
  with Witt decomposition $\beta = \alpha + \delta\pi_v \cup \chi$.
  Write
  \begin{itemize}
  \item $|\chi|$ for the order of the character $\chi\in H^{1}(F, \ZZ/n\ZZ)$;
  \item $F(\chi)\supset F$ for the degree $|\chi|$ field extension defined by $\chi$;
  \item $\alpha|_{F(\chi)} \in \Br(F(\chi))$ for the restriction of
    $\alpha$ to $F(\chi)$.
  \end{itemize}
  Then the index of $\beta$ is given by
  $\ind \beta = |\chi| \cdot \ind \alpha|_{F(\chi)}$.
\end{lemma}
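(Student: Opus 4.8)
The plan is to establish Nakayama's index formula $\ind \beta = |\chi| \cdot \ind \alpha|_{F(\chi)}$ by reducing the computation of the index over $\hat K$ to a computation in the residue field $F$. First I would set $m = |\chi|$ and consider the (unramified) cyclic extension $\hat L/\hat K$ of degree $m$ whose residue extension is $F(\chi)/F$; concretely, $\hat L$ corresponds to the character $\chi$ under the identification of $H^1(F, \QQ/\ZZ)$ with characters of unramified extensions. The key structural fact is that after restriction to $\hat L$, the ramified part of $\beta$ dies: since $\chi|_{F(\chi)}$ is trivial and restriction commutes with $\partial_v$, we have $\partial_v(\beta|_{\hat L}) = 0$, so $\beta|_{\hat L} = \alpha|_{\hat L}$ is unramified. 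Via the inflation isomorphism $\Br(\hat L)' \cong \Br(F(\chi))$ (compatible with the Witt decomposition), the index of this unramified class equals the index of $\alpha|_{F(\chi)}$ over the residue field.

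The heart of the argument is then the index-multiplicativity under the cyclic extension $\hat L/\hat K$. The plan is to show two inequalities. For the upper bound, since $[\hat L : \hat K] = m$ divides the index and since $\ind \beta$ divides $[\hat L:\hat K]\cdot \ind(\beta|_{\hat L}) = m \cdot \ind \alpha|_{F(\chi)}$ by the standard bound on index under a field extension combined with the corestriction/transfer argument, we obtain $\ind \beta \mid m \cdot \ind \alpha|_{F(\chi)}$. For the lower bound, one observes that $m = |\chi|$ must divide $\ind \beta$: indeed $\partial_v$ applied to $\beta$ gives $\chi$, and the order of $\chi$ forces the ramification to contribute a factor of $m$ to the index, which can be seen by restricting to the maximal unramified extension $\hat K_{\rm nr}$, where the residue field becomes separably closed and $\beta$ becomes the pure ramified symbol of order $m$.

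The main obstacle will be making precise and rigorous the interaction between the two contributions — the ramified factor $|\chi|$ and the unramified factor $\ind \alpha|_{F(\chi)}$ — so that they multiply rather than merely bound the index separately. The delicate point is showing that $\beta|_{\hat L}$ really is $\alpha|_{F(\chi)}$ under inflation (not merely unramified), and that the two coprimality-free factors combine exactly. This is typically handled by a careful analysis using that $\hat L/\hat K$ is the unique unramified extension splitting the character part, so that the restriction map $\Br(\hat K)'[n] \to \Br(\hat L)'[n]$ kills exactly the $\delta\pi_v \cup \chi$ summand and sends $\alpha$ to $\alpha|_{F(\chi)}$ compatibly with the Witt decompositions on both sides. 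Since this is a known result, I would cite the explicit computation in \cite{JW}, theorem~5.15, but the self-contained route above via unramified base change is the natural strategy.
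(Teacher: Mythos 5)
The paper gives no proof of this lemma at all: it simply points to \cite{JW}, Theorem~5.15, so your closing citation coincides with the paper's treatment. The problem is with the self-contained route you sketch, which has a genuine gap that you name but do not close. Your two bounds are (a) $\ind \beta \mid |\chi|\cdot \ind \alpha|_{F(\chi)}$, via the unramified extension $\hat L$ (this half is fine, and your identification of $\beta|_{\hat L}$ with the inflation of $\alpha|_{F(\chi)}$ is correct), and (b) $|\chi| \mid \ind\beta$. Even granting (b), the two together do not yield equality: from $\ind(\beta|_{\hat L})\mid \ind\beta$ one also gets $\ind \alpha|_{F(\chi)} \mid \ind\beta$, but since $|\chi|$ and $\ind\alpha|_{F(\chi)}$ need not be coprime one cannot conclude that their product divides $\ind\beta$ --- and in every application of the lemma in this paper they are \emph{not} coprime (both are powers of the same prime, e.g.\ $|\chi|=p^2$ and $\ind\alpha|_{F(\chi)}=p$ in the proof of Theorem~\ref{thm:indec-algebr}), so the coprime case is exactly the one that is useless here. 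Moreover your justification of (b) is incorrect as stated: over $\hat K_{\rm nr}$ the class $\beta$ does not become ``the pure ramified symbol of order $m$'' --- it becomes zero, since $\Br(\hat K)'=\ker\bigl(\Br(\hat K)\to \Br(\hat K_{\rm nr})\bigr)$ by definition and the inflated character $\chi$ restricts trivially to $G_{\hat K_{\rm nr}}$.

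The missing ingredient is precisely the structure theory of valued division algebras that \cite{JW} develops and that no amount of restriction--corestriction bookkeeping replaces: if $D$ is the division algebra underlying $\beta$, the valuation $v$ extends uniquely to $D$, the residue algebra $\overline D$ is a division algebra with center $F(\chi)$ representing $\alpha|_{F(\chi)}$, the value group satisfies $|\Gamma_D:\Gamma_{\hat K}|=|\chi|$, and the defectless equality $[D:\hat K]=[\overline D:F]\cdot |\Gamma_D:\Gamma_{\hat K}|$ gives $(\ind\beta)^2=(\ind\alpha|_{F(\chi)})^2\,[F(\chi):F]\cdot|\chi|=(|\chi|\cdot\ind\alpha|_{F(\chi)})^2$. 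If you want a proof rather than a citation, that is the argument to reproduce; as written, your sketch establishes only the upper bound.
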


Next we define ramification/residue maps in more general contexts.
For $r\in \ZZ$ and $n\in \NN$ prime to $\fchar F$, denote the $r$-th
Tate twist of the Galois module (or étale sheaf) $\ZZ/n\ZZ$ by
$$\ZZ/n\ZZ(r) \df= \begin{cases}
  \mu_n^{\tensor r}& \text{if }r\ge 1\\
  \ZZ/n\ZZ& \text{if }r = 0\\
  \Hom(\mu_n^{\tensor (-r)}, \ZZ/n\ZZ)& \text{if }r\le -1
\end{cases}
$$
For $n\in \NN$ prime to $\fchar F$ and $i\ge 1$, there are
(noncanonical) split exact sequences (see \cite{GMS} II.7.9, p.18)
\begin{equation*}
  0 \to H^i(F, \ZZ/n\ZZ(r)) \stackrel{\inf}{\to} H^i(\hat K, \ZZ/n\ZZ(r)) \stackrel{\partial_v}{\to} H^{i-1}(F, \ZZ/n\ZZ(r-1)) \to 0
\end{equation*}
The map $\partial_v$ is the \emph{ramification} or \emph{residue} map
with finite coefficients.  When $i=2$ and $r=1$ it is compatible with
the previously defined ramification map via the isomorphism
$\Br(\hat K)[n] = H^2(\hat K, \mu_n)$.

Let $Z$ be an integral regular scheme with function field $K(Z)$.  Let
$D\subset Z$ be an irreducible Weil divisor with generic point
$\nu\in Z$.  Then $\sheaf{O}_{Z, \nu}$ is a discrete valuation ring;
write $v_D\colon K(Z) \to \ZZ \cup \{\infty\}$ and $\kappa(D)$ for the
corresponding valuation and residue field, respectively, and let
$\widehat{K(Z)}_D$ be the completion of $K(Z)$ with respect to $v_D$.
If $n\in \NN$ is prime to $\fchar \kappa(D)$, we define the
\emph{ramification} or \emph{residue} map
$\partial_D\colon H^i(K(Z), \ZZ/n(r))\to H^{i-1}(\kappa(D),
\ZZ/n(r-1))$ with respect to $D$ as the composition
\begin{center}
  \begin{tikzcd}
    H^i(K(Z), \ZZ/n(r)) \arrow{r}{\res}& H^i(\widehat{K(Z)}_D,
    \ZZ/n(r)) \arrow{r}{\partial_{v_D}}& H^{i-1}(\kappa(D),
    \ZZ/n(r-1))
  \end{tikzcd}
\end{center}

\begin{lemma}\label{lemma:Brauer}
  Let $Z$ be an integral regular scheme of dimension at most~$2$, and
  let $K(Z)$ be its function field.
  \begin{enumerate}[(i)]
  \item For any $n\in \NN$ invertible in $Z$, there is an exact
    sequence
    $$0 \longrightarrow \Br(Z)[n] \longrightarrow \Br(K(Z))[n]
    \stackrel{\bigoplus\partial_D}\longrightarrow \bigoplus_D H^1(\kappa(D), \QQ/\ZZ)[n]
    $$
    where $D$ runs over all prime divisors of $Z$.

  \item Suppose that $Z = \Spec A$ for some excellent $2$-dimensional
    local ring $A$.  Let $P\in Z$ be its closed point, and $\kappa(P)$
    be the residue field of $P$.  Suppose that $n\in \NN$ is prime to
    $\fchar \kappa(P)$.  Then the following sequence is a complex:
    $$\Br(K(Z))[n] \stackrel{\bigoplus\partial_D}\longrightarrow \bigoplus_D H^1(\kappa(D), \ZZ/n)
    \stackrel{\partial_P}\longrightarrow H^0(\kappa(P), \mu_n^{-1})
    $$
    Here $D$ runs over all prime divisors of $Z$, and
    $\partial_P = \sum_{Q} \cor_{\kappa(Q) / \kappa(P)} \circ
    \partial_{Q}$ where $Q$ runs over all closed points in the
    normalization of $D$ in $\kappa(D)$, and $\partial_{Q}$ is the
    ramification map with respect to the discrete valuation of
    $\kappa(D)$ defined by $Q$.
  \end{enumerate}
\end{lemma}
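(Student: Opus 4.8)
The plan is to prove Lemma~\ref{lemma:Brauer} in two parts, each resting on purity/residue theory for étale cohomology over regular schemes, together with the Bloch--Ogus--Gabber style Gersten resolution in low dimension.

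For part~(i), the key input is the purity theorem for the Brauer group of a regular scheme: for $Z$ regular of dimension $\le 2$ with $n$ invertible on $Z$, the restriction map $\Br(Z)[n]\to\Br(K(Z))[n]$ is injective, and a class $\beta\in\Br(K(Z))[n]$ extends to an Azumaya class on $Z$ if and only if it is unramified at every codimension-$1$ point. First I would establish injectivity of $\Br(Z)[n]\to\Br(K(Z))[n]$, which for a regular integral scheme follows from the fact that $\Br(Z)\to\Br(K(Z))$ is injective (Auslander--Goldman / Grothendieck, since $Z$ is regular and integral so its Brauer group injects into that of its generic point). Then I would identify the kernel of $\bigoplus_D\partial_D$ with the image of $\Br(Z)[n]$: a class is unramified at each prime divisor $D$ precisely when its restriction to each $\widehat{K(Z)}_D$ lies in the ``unramified part'' $\Br(\kappa(D))$, by the local computation of $\partial_{v_D}$ recalled above. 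Exactness in the middle is then the statement that a class unramified in every codimension one is the restriction of a unique Brauer class on $Z$; in dimension $\le 2$ this is the cohomological purity theorem of Gabber (or, classically, the theorem on the Brauer group of a regular surface). I would cite the relevant purity/local-global result rather than reprove it, noting that the Gersten resolution for $\mu_n^{\otimes r}$ on a regular scheme of dimension $\le 2$ gives exactness of the complex $0\to H^2(Z,\mu_n)\to H^2(K(Z),\mu_n)\to\bigoplus_D H^1(\kappa(D),\ZZ/n)$.

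For part~(ii), where $Z=\Spec A$ for an excellent two-dimensional local ring with closed point $P$, the assertion is that the two-term sequence of residue maps is a complex, i.e.\ $\partial_P\circ(\bigoplus_D\partial_D)=0$. The structural reason is that both $\partial_D$ and $\partial_P$ are instances of the Gersten/Bloch--Ogus differential, and the composite of two consecutive differentials in the Gersten complex vanishes. Concretely, I would invoke the coniveau (Gersten) complex
\begin{center}
  \begin{tikzcd}
    H^2(K(Z), \ZZ/n(1)) \arrow{r}& \bigoplus_{D} H^1(\kappa(D), \ZZ/n(0)) \arrow{r}& \bigoplus_{P} H^0(\kappa(P), \ZZ/n(-1))
  \end{tikzcd}
\end{center}
whose differentials are exactly the residue maps $\partial_D$ and, at the second stage, the sum over closed points $P'$ of the compositions $\cor_{\kappa(Q)/\kappa(P')}\circ\partial_Q$. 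The fact that this is a complex (consecutive differentials compose to zero) is the functoriality/transfer compatibility of residues, proved in the Bloch--Ogus and Gabber framework and reproduced in \cite{GMS}; the presence of the corestriction $\cor_{\kappa(Q)/\kappa(P)}$ is forced precisely so that residues taken along the normalization of $D$ and then pushed forward to $\kappa(P)$ assemble into the correct Gersten differential. I would reduce to the case of a single closed point $P$ by localizing, and then appeal to the complex property of the Gersten resolution for the twisted coefficients $\ZZ/n(r)$, which holds because $n$ is prime to $\fchar\kappa(P)$.

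The main obstacle, and the step requiring the most care, is part~(ii): verifying that the transfer-twisted residue $\partial_P=\sum_Q\cor_{\kappa(Q)/\kappa(P)}\circ\partial_Q$ is genuinely the correct Gersten differential, since a given prime divisor $D\subset Z$ need not be regular and its closure may pass through $P$ with several analytic branches, so one must normalize $D$ and keep track of all points $Q$ lying over $P$. The compatibility of residues along this normalization with the corestriction is exactly the subtle point; I would handle it by working on the normalization $\widetilde D\to D$, using that residues commute with the finite pushforward $\cor$ and that the composite $\partial_P\circ\partial_D$ factors through the length-$2$ piece of the Gersten complex, which is zero. The hypotheses that $A$ is excellent and $n$ is prime to the residue characteristic are used precisely to guarantee that normalization is finite and that the twisted-coefficient Gersten complex is available.
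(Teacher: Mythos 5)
Your overall route is the same as the paper's, which simply cites the standard references: injectivity of $\Br(Z)\to\Br(K(Z))$ for a regular integral scheme (Auslander--Goldman/Grothendieck, \cite{Milne} IV.2.6), purity of the Brauer group for exactness in the middle in (i), and Kato's construction of the Gersten-type complex (\cite{KatoHP}, \S1) for the complex property in (ii), including the appearance of the corestriction through the normalization of $D$. Your identification of the delicate point in (ii) --- that $\partial_P$ must be assembled from residues at the points $Q$ of the normalization of $D$ lying over $P$, pushed forward by $\cor_{\kappa(Q)/\kappa(P)}$ --- is exactly where the content lies, and citing the Bloch--Ogus/Kato complex property is the intended argument.

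One genuine slip: your closing remark for (i), that the Gersten resolution gives exactness of
$0\to H^2(Z,\mu_n)\to H^2(K(Z),\mu_n)\to\bigoplus_D H^1(\kappa(D),\ZZ/n)$,
is false as stated. By the Kummer sequence the kernel of $H^2(Z,\mu_n)\to H^2(Z,\GG_m)$ is $\Pic(Z)/n$, which is generally nonzero, so $H^2(Z,\mu_n)$ does \emph{not} inject into $H^2(K(Z),\mu_n)$ (already $Z=\PP^1_{\CC}$ gives a counterexample: $H^2(Z,\mu_n)\cong\ZZ/n$ while $H^2(\CC(t),\mu_n)=0$). The coniveau/Gersten complex is exact at the $H^2(K(Z),\mu_n)$ spot with first term the unramified cohomology $H^0(Z,\mathcal{H}^2)$, not $H^2(Z,\mu_n)$; the injectivity you need really does live at the level of Brauer groups, where $\Pic$ of the generic point is trivial. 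Since your main argument for (i) goes through $\Br(Z)[n]$ directly via purity, this aside does not break the proof, but you should delete or correct it.
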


\begin{proof}
  The injectivity of $\Br(Z) \to \Br(K(Z))$ in (i) is proven in
  \cite{Milne} IV.2.6, p.145, while the exactness in the middle term
  follows from the purity of the Brauer group (see \cite{AG} 7.4 or
  \cite{Milne} IV.2.18~(b), p.153, and also \cite{Sa08}, Lemma 6.6).
  Item (ii) is proven in \cite{KatoHP}, \S1, p.148.
\end{proof}

In particular, for a $p$-adic curve $X$ (as in
section~\ref{sec:setup}), the residue field $\kappa(x)$ of any closed
point $x\in X_0$ is a $p$-adic field (of characteristic~$0$), thus we
get an exact sequence
\begin{equation}\label{eq:9}
  0 \to \Br(X) \to \Br(K(X)) \stackrel{\bigoplus \partial_x}{\to} \bigoplus_ {x \in X_0} H^1\bigl (\kappa(x), \QQ / \ZZ \bigr)
\end{equation}
which allows us to interpret $\Br(X)$ as the subgroup of $\Br(K(X))$
consisting of unramified classes with respect to all $x\in X_0$.


\subsection{Lichtembaum's duality}

Here we collect some facts about Lichtenbaum's duality (see
\cite{Lichtenbaum} and also \cite{Saito2}, appendix).

\begin{theorem}[Lichtenbaum]\label{thm:lichtembaums-duality}
  Let $X$ be a $p$-adic curve (as in section~\ref{sec:setup}).  Then
  there is a non-degenerate pairing
  $$\langle-, - \rangle \colon  \Pic(X) \times \Br(X) \to \QQ/\ZZ$$
  inducing an isomorphism between $\Br(X)$ and $\Pic(X)^\vee$.
\end{theorem}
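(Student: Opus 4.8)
The plan is to construct the pairing explicitly by evaluating Brauer classes at closed points, and then to prove non-degeneracy by translating both $\Pic(X)$ and $\Br(X)$ into the Galois cohomology of the Jacobian and invoking Tate's local duality together with local class field theory. To define $\langle-,-\rangle$, note that for a closed point $x\in X_0$ the residue field $\kappa(x)$ is a finite extension of $K$, hence a $p$-adic field, so local class field theory supplies an invariant isomorphism $\inv_{\kappa(x)}\colon \Br(\kappa(x))\isoto\QQ/\ZZ$. Given $\alpha\in\Br(X)$ and a divisor $\sum_x n_x\,x$, I would set $\langle\sum_x n_x\,x,\alpha\rangle \df= \sum_x n_x\,\inv_{\kappa(x)}(\alpha|_x)$, where $\alpha|_x\in\Br(\kappa(x))$ is the pullback of $\alpha$ along $\Spec\kappa(x)\to X$. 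The content of the construction is that this descends to $\Pic(X)$, i.e.\ vanishes on principal divisors; this is a reciprocity law for the sum of local invariants of a class in $\Br(X)\subset\Br(K(X))$, in the spirit of the residue complexes of Lemma~\ref{lemma:Brauer}.

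Next I would pass to Galois cohomology. Identifying $\Pic(X)=H^1(X,\GG_m)$ and $\Br(X)=H^2(X,\GG_m)$ and running the Hochschild--Serre spectral sequence $H^p(G_K,H^q(X_{\overline K},\GG_m))\Rightarrow H^{p+q}(X,\GG_m)$, the decisive geometric input is Tsen's theorem, which gives $\Br(X_{\overline K})=H^2(X_{\overline K},\GG_m)=0$. Together with Hilbert 90 ($H^1(G_K,\overline K^\unit)=0$) and the vanishing $H^3(K,\GG_m)=0$ (the strict cohomological dimension $\scd(G_K)=2$), this realizes $\Pic(X)$ as the kernel of a transgression $d_2\colon\Pic(X_{\overline K})^{G_K}\to\Br(K)$ and fits $\Br(X)$ into a short exact sequence $0\to\Br(K)/\im d_2\to\Br(X)\to H^1(K,\Pic(X_{\overline K}))\to 0$. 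Feeding in the degree sequence $0\to J(\overline K)\to\Pic(X_{\overline K})\to\ZZ\to 0$ of $G_K$-modules for the Jacobian $J$, and using $H^1(G_K,\ZZ)=0$, I would describe $\Pic(X)$ as an extension of a subgroup of $\ZZ$ (controlled by the index of $X$) by the degree-zero part $\Pic^0(X)\subseteq J(K)$, and $H^1(K,\Pic(X_{\overline K}))$ as a quotient of $H^1(K,J)$.

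Finally I would feed in the two duality inputs. Local class field theory gives $\Br(K)\cong\QQ/\ZZ$, hence the tautological duality between the discrete group $\ZZ$ and $\QQ/\ZZ$; and Tate's local duality for the principally polarized, hence self-dual, Jacobian $J\cong J^\vee$ gives a perfect pairing $J(K)\times H^1(K,J)\to\Br(K)=\QQ/\ZZ$ realizing $H^1(K,J)$ as the full Pontryagin dual of the compact group $J(K)$. Matching these against the extensions above and applying the five lemma would show that $\langle-,-\rangle$ induces the isomorphism $\Br(X)\isoto\Pic(X)^\vee$.

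The hard part will be the bookkeeping at the interface of the two extensions when $X$ has no rational point: one must check that the evaluation pairing is compatible with the spectral-sequence filtrations, so that it restricts to Tate's pairing on the Jacobian part and to the reciprocity pairing on the degree part, and one must reconcile the index of $X$, the image of $d_2$, and the period class in $H^1(K,J)$ so that the sequence for $\Br(X)$ is genuinely Pontryagin-dual to the one defining $\Pic(X)$ (with a rational point everything splits and $\delta=1$, but the general case is exactly the crux of Lichtenbaum's theorem). A secondary technical point is to equip $\Pic(X)$ with the natural topology coming from $J(K)$ being a compact $p$-adic Lie group, so that Tate's finiteness and duality theorems apply and $\Pic(X)^\vee$ really is the continuous dual.
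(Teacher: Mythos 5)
The paper does not prove this statement: it is imported wholesale from Lichtenbaum's paper (and Saito's appendix), and the surrounding text only records the explicit formula for the pairing and the topology on $\Pic(X)$ needed to make sense of $\Pic(X)^\vee$. So there is no internal proof to compare against; what you have written is, in outline, Lichtenbaum's own argument. Your architecture is correct: the evaluation pairing $\sum_x n_x \inv_{\kappa(x)}(\alpha|_x)$ is exactly the pairing the paper uses; the Hochschild--Serre spectral sequence with Tsen's theorem, Hilbert~90, and $\scd(G_K)=2$ does realize $\Pic(X)$ as $\ker\bigl(d_2\colon \Pic(X_{\overline K})^{G_K}\to \Br(K)\bigr)$ and $\Br(X)$ as an extension of $H^1(K,\Pic(X_{\overline K}))$ by $\Br(K)/\im d_2$; and Tate's duality $J(K)\times H^1(K,J)\to\QQ/\ZZ$ for the self-dual Jacobian, together with $\inv_K\colon\Br(K)\isoto\QQ/\ZZ$, supplies the duality on the two graded pieces. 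Your remark on topologizing $\Pic(X)$ via the compact group $J(K)$ is also the right one and matches the paper's setup.

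That said, as a proof rather than a roadmap, the proposal stops exactly where the content is. Two steps are asserted but not carried out. First, well-definedness: the vanishing of $\sum_x v_x(f)\inv_{\kappa(x)}(\alpha|_x)$ for $f\in K(X)^\unit$ and $\alpha\in\Br(X)$ is a genuine reciprocity theorem, not a formality; the standard argument pushes forward along $f\colon X\to\PP^1_K$ (or invokes the exactness of the Kato complex) and should be supplied. Second, and more seriously, the ``hard part'' you name --- compatibility of the evaluation pairing with the spectral-sequence filtrations, so that it restricts to Tate's pairing on the Jacobian part and to the invariant pairing on the degree part, together with the reconciliation of $\im d_2$, the period class in $H^1(K,J)$, and the index of $X$ --- is precisely the crux of Lichtenbaum's theorem; his proof of it is intertwined with the period--index relations and is not a routine five-lemma check. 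Identifying the difficulty is not the same as resolving it, so as it stands the proposal is an accurate summary of the cited proof's strategy rather than a self-contained proof.
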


This paring can explicitly be described as follows.  For each
$x\in X_0$ write
\begin{itemize}
\item $\inv_{\kappa(x)}\colon \Br(\kappa(x)) \isoto \QQ/\ZZ$ for the
  invariant map with respect to the $p$-adic field $\kappa(x)$;
  
\item $\beta(x)\in \Br(\kappa(x))$ for the image of $\beta\in \Br(X)$
  under the natural map $\Br(X) \to \Br(\kappa(x))$ (induced by the
  inclusion $\Spec \kappa(x) \into X$).
\end{itemize}
Let $\alpha \in \Pic(X)$, $\beta \in \Br(X)$, and let
$\sum_{x\in X_0} n_x x \in \Div(X)$ be a divisor representing
$\alpha$.  Lichtenbaum's pairing is given by
$$\langle \alpha, \beta\rangle
= \sum_{x\in X_0} n_x \inv_{\kappa(x)} \beta(x)
$$
The topology on $\Pic(X)$ is defined as follows.  Let $\Pic^0(X)$ be
the kernel of the degree map $\deg\colon \Pic(X) \to \ZZ$, and $J_X$
be the Jacobian variety of $X$.  There is an exact sequence
(\cite{CornellSilverman}, remark~1.6, p.~169)
$$0 \to \Pic^0(X) \to J_X(K) \stackrel\delta\to \Br(K)\stackrel{\inv_K} = \QQ/\ZZ
$$
where $\delta$ has finite image
$$\im \delta = \frac{\frac1P \ZZ}{\ZZ}
\quad\text{with}\quad P = \text{period of $X$} \df= \gcd \{ \deg
\alpha \mid \alpha \in H^0(K, \Pic(X \tensor_K \overline K)) \}
$$
(in particular, observe that $\Pic^0(X) = J_X(K)$ whenever
$X(K)\ne \emptyset$).  We view $J_X(K)$ as a topological group with
the usual $p$-adic topology, and $\Pic^0(X)$ as a topological subgroup
with the induced topology.  From \cite{Mattuck}, we have

\begin{lemma}[Mattuck]
  Let $A$ be an abelian variety of dimension $g$ over a $p$-adic field
  $K$.  Then $A(K)$ contains an open subgroup of finite index
  isomorphic to $R^g$.
\end{lemma}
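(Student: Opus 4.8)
The plan is to study $A(K)$ near the identity through its formal group, and then to upgrade a statement about a neighborhood of the identity to a finite-index statement by invoking compactness. Since $A$ is a smooth group variety of dimension $g$, its group of $K$-points $A(K)$ is a commutative $p$-adic Lie group of dimension $g$; and because an abelian variety is projective, $A(K)$ is moreover compact for the $p$-adic topology. Choosing local coordinates $T_1,\dots,T_g$ at the identity $O\in A(K)$ expresses the group law, on a neighborhood of $O$, as a $g$-dimensional formal group law $\hat A$ with coefficients in $K$.

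First I would produce a fundamental system of open subgroups. For each integer $m\ge 1$, let $U_m\subset A(K)$ be the set of points all of whose coordinates lie in $\pi^m R$. For $m$ large enough the formal group law $\hat A$ (and its inverse series) converges on $(\pi^m R)^g$ and maps it into itself, so each such $U_m$ is a subgroup of $A(K)$; and since the $U_m$ form a neighborhood basis of $O$ for the $p$-adic topology, each $U_m$ is open. Next I would invoke the formal logarithm $\log_{\hat A}$: for all sufficiently large $m$ (large relative both to the ramification index $e=v(p)$ and to the coefficients of $\hat A$), $\log_{\hat A}$ converges on $(\pi^m R)^g$ and defines an isomorphism of topological groups
\begin{equation*}
  \log_{\hat A}\colon U_m \isoto (\pi^m R)^g,
\end{equation*}
where the target carries coordinatewise addition and the inverse map is the formal exponential. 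As $\pi^m R\cong R$ as a topological group, the right-hand side is isomorphic to $R^g$, and hence $U_m\cong R^g$ for every such $m$.

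Finally I would conclude by compactness: $U_m$ is an open subgroup of the compact group $A(K)$, hence of finite index, which exhibits the desired open subgroup of finite index isomorphic to $R^g$. The step I expect to be most delicate is the convergence and inversion of the logarithm: one must push $m$ past the bound governed by $e/(p-1)$ so that $\log_{\hat A}$ and the exponential are mutually inverse and carry the formal group law to coordinatewise addition, and one must also take $m$ large enough to absorb the (a priori non-integral) coefficients of $\hat A$, since $A$ is not assumed to have good reduction. These are standard facts about formal groups — equivalently, about $p$-adic Lie groups — over a complete discretely valued field of mixed characteristic, and once they are in place the remainder of the argument is purely formal.
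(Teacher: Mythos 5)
The paper does not prove this lemma at all: it is quoted directly from Mattuck's paper \cite{Mattuck}, so there is no internal proof to compare against. Your argument is correct and is essentially the classical one (and Mattuck's own): pass to the formal group at the identity, use the formal logarithm to identify a small enough congruence subgroup $U_m$ with $(\pi^m R)^g\cong R^g$ once $m$ exceeds the bound $e/(p-1)$ governing convergence of $\log$ and $\exp$, and then use compactness of $A(K)$ (projectivity of $A$) to see that the open subgroup $U_m$ has finite index. The only point worth tightening is the "non-integral coefficients" issue you flag: rather than absorbing them by enlarging $m$ ad hoc, it is cleaner either to rescale coordinates $T\mapsto \pi^N T$ to make the formal group law integral, or to take the formal completion of a smooth model (e.g.\ the N\'eron model) along the identity section, after which the standard integral theory applies verbatim.
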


Thus the group $J_X(K)$ is compact, and all its finite index subgroups
are open (thus also closed and compact) since any such subgroup
contains a finite index subgroup of $R^g$, which is open.  In
particular, $\Pic^0(X)$ is an open compact subgroup of $J_X(K)$.  We
endow $\Pic(X)$ with the unique topology compatible with its group
structure, and for which $\Pic^0(X)$ is open in $\Pic(X)$.

With this topology, all subgroups $n\Pic(X)$ of $\Pic(X)$
($n =1 ,2, 3, \ldots$) are of finite index and open.  Therefore the
$n$-torsion of $\Br(X)$ is given by
$$\Br(X)[n] \stackrel{\text{Lich}}{=} \Pic(X)^\vee [n] =
\Hom_{c}(\Pic(X), \QQ/\ZZ)[n] = \Hom(\Pic(X)/n, \tfrac{1}{n}\ZZ/\ZZ)
$$

Finally, we mention an important consequence of Lichtembaum's duality,
which is implicit in the proof of theorem~5, p.132 of
\cite{Lichtenbaum}.  See also \cite{Saito2}, \S9 appendix, p.408 and
\cite{Pop}, theorem~4.5, p.172.

\begin{theorem}[Ha\ss e principle]\label{thm:hasse}
  Let $X$ be a $p$-adic curve.  Then the following diagonal map is
  injective:
  $$\Br(K(X)) \into \prod_{x\in X_0} \Br(\widehat{K(X)}_x)$$
  (here $\widehat{K(X)}_x$ is the $v_x$-completion of $K(X)$, see
  notation in section~\ref{sec:setup}).
\end{theorem}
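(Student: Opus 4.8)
The plan is to combine the residue sequence \eqref{eq:9} with the non-degeneracy of Lichtenbaum's pairing from Theorem~\ref{thm:lichtembaums-duality}. Let $\beta\in\Br(K(X))$ be a class whose image in $\Br(\widehat{K(X)}_x)$ vanishes for every $x\in X_0$; the goal is to conclude $\beta=0$. First I would note that the residue map $\partial_x$ factors as $\partial_{v_x}\circ\res$ through the completion, so the vanishing of $\res\beta$ in $\Br(\widehat{K(X)}_x)$ forces $\partial_x\beta=0$ for every $x$. By the exactness of \eqref{eq:9}, this already places $\beta$ in the subgroup $\Br(X)$ of classes unramified at all closed points.

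Next I would identify, for such a $\beta\in\Br(X)$, the image of $\beta$ in each completion with the scheme-theoretic specialization $\beta(x)\in\Br(\kappa(x))$. Since $\partial_x\beta=0$, the Witt decomposition of $\res\beta\in\Br(\widehat{K(X)}_x)'$ has trivial ramified part, so $\res\beta=\inf(\alpha_x)$ for a unique unramified class $\alpha_x\in\Br(\kappa(x))$. The compatibility to verify is that $\alpha_x=\beta(x)$: because $\beta$ is unramified at $x$, purity lets us view it as a class in $\Br(\sheaf{O}_{X,x})$ over the discrete valuation ring $\sheaf{O}_{X,x}$, and both $\alpha_x$ and $\beta(x)$ are obtained by reducing this integral class modulo the maximal ideal — one route passing through $\widehat{\sheaf{O}_{X,x}}$ and the split sequence of Serre recalled at the start of this section, the other directly through $\Spec\kappa(x)\into X$. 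This follows from the functoriality of the residue sequence with respect to $\Spec\kappa(x)\into\Spec\widehat{\sheaf{O}_{X,x}}$ and $\Spec\sheaf{O}_{X,x}\into X$, together with the fact that reduction inverts inflation on the unramified part. Granting this, the hypothesis $\res\beta=0$ and the injectivity of $\inf$ yield $\beta(x)=0$ for every $x\in X_0$.

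It then remains to deduce $\beta=0$ from the vanishing of all specializations. Since each $\kappa(x)$ is a $p$-adic field, $\inv_{\kappa(x)}\colon\Br(\kappa(x))\isoto\QQ/\ZZ$ is an isomorphism, so $\beta(x)=0$ gives $\inv_{\kappa(x)}\beta(x)=0$. Hence for any divisor $\sum_{x}n_x x$ representing a class $\alpha\in\Pic(X)$, the explicit description of Lichtenbaum's pairing gives $\langle\alpha,\beta\rangle=\sum_{x}n_x\inv_{\kappa(x)}\beta(x)=0$. As $\alpha$ was arbitrary and the pairing of Theorem~\ref{thm:lichtembaums-duality} is non-degenerate, we conclude $\beta=0$, which is exactly the injectivity of the diagonal map.

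The main obstacle is the compatibility in the middle step, namely that the unramified part $\alpha_x$ of the completion image equals the specialization $\beta(x)$. The surrounding steps are formal: the first is pure exactness of \eqref{eq:9}, and the last is a direct computation with the pairing and the fact that $\inv_{\kappa(x)}$ is an isomorphism. I expect the compatibility to reduce to the standard naturality of the residue/inflation sequence for complete discretely valued fields under the change of base ring $\sheaf{O}_{X,x}\to\widehat{\sheaf{O}_{X,x}}$, once one records that an unramified class extends to an Azumaya class over the discrete valuation ring $\sheaf{O}_{X,x}$ whose reduction realizes $\beta(x)$.
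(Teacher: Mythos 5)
Your proof is correct and follows exactly the route the paper indicates: the paper offers no proof of its own, merely citing Lichtenbaum, Saito and Pop and describing the statement as a consequence of Lichtenbaum's duality, and your argument (unramifiedness at every $x$ via the residue sequence \eqref{eq:9}, identification of the unramified part of the local class with the specialization $\beta(x)$, then non-degeneracy of the explicit pairing) is the standard derivation found in those references. The one point requiring care is the compatibility $\alpha_x=\beta(x)$, which you correctly isolate and which does reduce to functoriality of the Brauer group along $\Spec\kappa(x)\to\Spec\widehat{\sheaf{O}_{X,x}}\to\Spec\sheaf{O}_{X,x}\to X$ together with the identification of $\Br(\widehat{\sheaf{O}_{X,x}})$ with $\Br(\kappa(x))$.
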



\subsection{Integral models}\label{sec:integral-models}

From now on, we choose once and for all an integral model $\XX$ of
$X$, i.e., a $2$-dimensional regular $R$-scheme such that
\begin{itemize}
\item $f\colon \XX \to \Spec R$ is flat and proper;
\item there is an isomorphism of $\eta$-schemes $X \cong \XX_\eta$;
\item the reduced scheme $(\XX_s)_{\rm red}$ is a $1$-dimensional
  (proper) scheme over $s$ whose irreducible components are all
  regular and have normal crossings (i.e., $\XX_s$ only has ordinary
  double points as singularities).
\end{itemize}
The existence of such an integral model follows from the resolution of
singularities of excellent $2$-dimensional schemes (\cite{Lipman}),
coupled with the embedded resolution of the special fiber (see
\cite{Liu}, corollary~IX.2.30 p.404).  If $X$ admits a \textit{smooth}
integral model $\XX$ over $R$, then we say $X$ has \emph{good
  reduction} over $R$.  If that is the case, then the special fiber
$\XX_s$ will have a single irreducible component, a proper smooth
curve over $s = \Spec k$, and its function field $k(\XX_s)$ will be a
global field of characteritic $p$.

Observe that since $X$ is geometrically integral by assumption,
$H^0(X, \sheaf{O}_X) = K$ by
\cite[\href{https://stacks.math.columbia.edu/tag/0BUG}{Lemma 0BUG}]{stacks-project}.
Then by Stein's factorization
\cite[\href{https://stacks.math.columbia.edu/tag/0AY8}{Lemma 0AY8}]{stacks-project}
the map $f\colon \XX \to \Spec R$ has geometrically connected fibers
and satisfies $f_*\sheaf{O}_{\XX} = \sheaf{O}_R$.

Recall that a \emph{horizontal prime divisor} $D \subset \XX$ is a
prime divisor which is not contained in the special fiber $\XX_s$.
For the next lemma, see \cite{Liu} proposition~VIII.3.4, p.349, and
lemma VIII.3.35, p.360.

\begin{lemma}\label{lemma:integral-models}
  Let $X$ be a $p$-adic curve (as in section~\ref{sec:setup}), and
  $\XX\to \Spec R$ be an integral model.  Then
  \begin{enumerate}[(i)]
  \item A horizontal divisor $D\subset \XX$ is finite and flat over
    $\Spec R$.  In particular, it is affine: $D = \Spec S$ for some
    finite extension of domains $S \supset R$, and the generic point
    $D_\eta$ of $D$ is the spectrum of a $p$-adic field $\Frac S$.
    
  \item There is a bijection between the set of horizontal divisors of
    $\XX$ and $X_0$.  Namely, this bijection maps a horizontal divisor
    $D \subset \XX$ to its generic point $x = D_\eta \in X_0$, and
    conversely the closure $D = \overline{\{x\}}$ of $x\in X_0$ inside
    $\XX$ defines the corresponding horizontal divisor.

  \item For any closed point $P \in \XX_s$, there is a horizontal
    divisor $D$ on $\XX$ intersecting the special fiber $\XX_s$
    transversally at $P$.
  \end{enumerate}
\end{lemma}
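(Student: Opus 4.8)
The plan is to dispatch (i) and (ii) by elementary dimension theory together with the generic-point/closure correspondence, and to reserve the real work for (iii). For (i), since $\XX$ is regular and connected it is integral of dimension $2$, so every prime divisor $D$ has dimension $1$. Being horizontal, $D\not\subset\XX_s = V(\pi)$, so $g\colon D\to\Spec R$ is dominant, and it is proper since $D$ is closed in the proper $R$-scheme $\XX$. I would then check that $g$ is quasi-finite by bounding its fibres: the special fibre $D\cap\XX_s$ has dimension $\le 0$ because $\XX_s$ is an effective Cartier divisor not containing the codimension-$1$ subscheme $D$, while the generic fibre $D_\eta$ is a closed subscheme of the irreducible curve $X = \XX_\eta$ which is proper (otherwise $D\supseteq\overline X = \XX$), hence $0$-dimensional. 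A proper quasi-finite morphism is finite, so $D = \Spec S$ with $S$ a finite $R$-algebra; since $D$ is integral and dominates $\Spec R$, the ring $S$ is a domain containing $R$, hence $R$-torsion-free and therefore free over the discrete valuation ring $R$, which gives flatness. Finally $S\tensor_R K$ is a finite $K$-algebra that is a domain, hence a field $\Frac S$ finite over $K$, and $D_\eta = \Spec\Frac S$.

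For (ii), the generic point $\xi$ of a horizontal divisor $D$ maps to $\eta$, hence lies in the open subscheme $X = \XX_\eta$, where by (i) it is a closed point $x$; and $D = \overline{\{\xi\}} = \overline{\{x\}}$ (closure in $\XX$). Conversely, for $x\in X_0$ the closure $\overline{\{x\}}$ is irreducible with generic point $x$, dominates $\Spec R$, is proper over it, and is not contained in $\XX_s$ (as $x\in\XX_\eta$); its generic fibre being the single point $x$, it has dimension $1$, i.e.\ it is a horizontal prime divisor. The assignments $D\mapsto D_\eta$ and $x\mapsto\overline{\{x\}}$ are then visibly inverse to one another.

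Part (iii) is where I expect the main difficulty, so I would build it from a local model and then globalize. Work in the $2$-dimensional regular local ring $A = \sheaf{O}_{\XX,P}$, with maximal ideal $\mathfrak m$ and finite residue field $\kappa(P)$, and let $\tau\in A$ be a local equation of the component of $(\XX_s)_{\rm red}$ through $P$, which is a regular parameter because that component is regular. Here ``transversal at $P$'' means that a local equation $t$ of $D$ together with $\tau$ forms a regular system of parameters, i.e.\ $(t,\tau) = \mathfrak m$. Since $\dim_{\kappa(P)}\mathfrak m/\mathfrak m^2 = 2$ and $\bar\tau\ne 0$, I can pick $t\in\mathfrak m$ whose class avoids the line $\kappa(P)\cdot\bar\tau$; then $t,\tau$ is a regular system of parameters, $V(t)$ is regular at $P$, and $t\notin(\tau)$ forces the branch through $P$ to be horizontal.

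The obstacle is that this $t$ is only a local datum, and over the finite field $\kappa(P)$ one cannot invoke Bertini naively. I would use that $\XX$ is projective over $R$ (a regular arithmetic surface proper over a Dedekind ring being projective), fix a very ample $\sheaf{L}$, and appeal to Serre vanishing so that for $N\gg 0$ the jet evaluation $H^0(\XX,\sheaf{L}^{\tensor N})\to\sheaf{L}^{\tensor N}\tensor A/\mathfrak m^2$ is surjective. Choosing a global section $s$ vanishing at $P$ and realizing the $1$-jet $t$ there, the divisor $V(s)$ is regular and transversal to $\XX_s$ at $P$ because $s\equiv t\pmod{\mathfrak m^2}$; the sought $D$ is then the unique component of $V(s)$ through $P$, which by (ii) corresponds to a closed point of $X$. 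Checking that this component is horizontal, regular at $P$, and genuinely transversal — together with the bookkeeping required when $P$ is a node of $(\XX_s)_{\rm red}$ (transversality to each branch) or when $\XX_s$ carries multiplicities — is precisely the delicate point, for which I would follow \cite{Liu} (proposition~VIII.3.4 and lemma~VIII.3.35).
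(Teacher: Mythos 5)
The paper offers no proof of this lemma at all: it is quoted verbatim from \cite{Liu} (proposition~VIII.3.4 and lemma~VIII.3.35), which are exactly the references you fall back on at the end. Your arguments for (i) and (ii) are complete and correct — dominance plus properness plus quasi-finiteness (finite special fibre because $D\not\subset\XX_s$, finite generic fibre because $D_\eta$ is a proper closed subset of the irreducible curve $X$) gives finiteness by Zariski's main theorem, torsion-freeness over the discrete valuation ring $R$ gives flatness, and the generic-point/closure correspondence gives the bijection. Your sketch of (iii) is the standard argument and essentially the one in \cite{Liu}: prescribe a $1$-jet $t$ at $P$ with $(t,\tau)=\ideal{m}_P$, then realize it by a global section of a large power of an ample sheaf via Serre vanishing (using that a regular arithmetic surface proper over $R$ is projective). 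The bookkeeping you defer is genuinely routine — over the finite field $\kappa(P)$ the plane $\ideal{m}_P/\ideal{m}_P^2$ is never a union of finitely many lines, so a suitable $t$ always exists even at a node — and in the paper's only application of (iii) the model is smooth with reduced irreducible special fibre, so the nodal and multiplicity caveats you raise do not arise. In short, your proposal is correct and supplies the details the paper delegates to its citation.
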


The next lemma summarizes the facts we will need concerning the
relation between the Brauer and Picard groups of the special and
generic fibers of $\XX$.

\begin{lemma}\label{lemma:cft}
  In the above notation, let $C_1, \ldots, C_r$ be the irreducible
  components of the special fiber $\XX_s$.
  \begin{enumerate}[(i)]
  \item (Artin) $\Br(\XX) = \Br(\XX_s) = 0$.
  \item Assume that $H^0(\XX_s, \GG_m) = k^\unit$, and let $n\in \NN$
    not divisible by $p$.  Then there are canonical isomorphisms
    \begin{align*}
      \Pic(\XX)[n] &= \Pic(\XX_s)[n]\\
      \Pic(\XX)/n  &= \Pic(\XX_s)/n = H^2(\XX, \mu_n) = H^2(\XX_s, \mu_n)
    \end{align*}

  \item There is an exact sequence
    \begin{center}
      \begin{tikzcd}
        0\arrow{r} & \ZZ \arrow{r}{\iota} & \ZZ^r \arrow{r} &
        \Pic(\XX) \arrow{r}{\res} & \Pic(X) \arrow{r} & 0
      \end{tikzcd}
    \end{center}
    Here $\iota(1) = (a_1, \ldots, a_r)$ is the multiplicity vector of
    the special fiber viewed as an element of $\Div(\XX)$:
    $\XX_s = \divisor(\pi) = \sum_{1\le i\le r} a_i [C_i]$.  In
    particular, if $X$ has good reduction (i.e., $\XX$ is smooth over
    $R$) then restriction $\Pic(\XX) \isoto \Pic(X)$ is an
    isomorphism.
  \end{enumerate}
\end{lemma}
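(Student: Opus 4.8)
The plan is to handle the three parts in turn, with the Kummer sequence as the principal engine and proper base change as the device for passing between $\XX$ and its special fibre $\XX_s$. For part~(i) I would simply invoke Artin's vanishing theorem for the Brauer group of a regular two-dimensional scheme proper over a henselian (here complete) discrete valuation ring with finite residue field, which gives $\Br(\XX)=0$. For $\Br(\XX_s)$, note that $\XX_s$ is a proper curve over the finite field $k$; the vanishing $\Br(\XX_s)=0$ then follows either from the same circle of Artin's results, or, in the good reduction case where $\XX_s$ is a smooth geometrically connected curve over $k$, from the Hasse principle for the global function field $k(\XX_s)$ together with $\Br(\kappa(P))=0$ at every closed point $P$ (finite residue field), which forces all local invariants of an unramified class to vanish.

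For part~(ii) I would write down the Kummer sequence $1\to\mu_n\to\GG_m\xrightarrow{n}\GG_m\to1$, legitimate since $n$ is prime to $p$ and hence invertible on $\XX$ and on $\XX_s$, and read off its long exact cohomology sequence. Writing $Y$ for either $\XX$ or $\XX_s$, this breaks into
$$0\to H^0(Y,\GG_m)/n\to H^1(Y,\mu_n)\to\Pic(Y)[n]\to0$$
and
$$0\to\Pic(Y)/n\to H^2(Y,\mu_n)\to\Br(Y)[n]\to0.$$
On global sections, Stein factorization ($f_*\sheaf O_\XX=\sheaf O_R$, recorded in subsection~\ref{sec:integral-models}) gives $H^0(\XX,\GG_m)=R^\unit$, while $H^0(\XX_s,\GG_m)=k^\unit$ is the standing hypothesis; the restriction $R^\unit\to k^\unit$ is the reduction map, surjective with kernel the pro-$p$ group of principal units $1+\pi R$, on which multiplication by $n$ is bijective, so a snake-lemma argument yields $R^\unit/n\isoto k^\unit/n$. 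Proper base change (valid because $R$ is complete, hence henselian, and $\mu_n$ is finite) supplies isomorphisms $H^i(\XX,\mu_n)\isoto H^i(\XX_s,\mu_n)$ for all $i$. Comparing the two first Kummer sequences through these maps and applying the five lemma gives $\Pic(\XX)[n]=\Pic(\XX_s)[n]$; comparing the two second sequences and using $\Br(\XX)=\Br(\XX_s)=0$ from part~(i) to strip off the $\Br[n]$ terms identifies $\Pic(\XX)/n\cong H^2(\XX,\mu_n)\cong H^2(\XX_s,\mu_n)\cong\Pic(\XX_s)/n$, as claimed.

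For part~(iii) the plan is to use the excision sequence for the divisor class group of the regular scheme $\XX$ relative to the closed subset $\XX_s=\bigcup_i C_i$, whose complement is exactly the generic fibre $\XX\setminus\XX_s=\XX_\eta=X$. Since $\XX$ is regular this reads
$$\textstyle\bigoplus_i\ZZ[C_i]\longrightarrow\Pic(\XX)\xrightarrow{\res}\Pic(X)\longrightarrow0,$$
so it remains to compute the kernel of $\ZZ^r\to\Pic(\XX)$. A tuple $(m_i)$ lies in this kernel exactly when $\sum_i m_i C_i=\divisor_\XX(f)$ for some $f\in K(\XX)^\unit=K(X)^\unit$. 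Such an $f$ has no horizontal divisor, hence no zeros or poles on the proper curve $X$, so $f\in H^0(X,\GG_m)=K^\unit$; writing $f=u\pi^m$ with $u\in R^\unit$ a global unit on $\XX$ gives $\divisor_\XX(f)=m\cdot\divisor(\pi)=m\sum_i a_i C_i$. As the $C_i$ are distinct prime divisors, the kernel is exactly $\ZZ\cdot(a_1,\dots,a_r)=\im\iota$, and $\iota$ is injective because the $a_i$ are positive. This produces the asserted four-term exact sequence, and when $\XX$ is smooth ($r=1$, $a_1=1$) it collapses to $\Pic(\XX)\isoto\Pic(X)$.

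The genuinely deep input is part~(i): the vanishing of $\Br(\XX)$ is Artin's theorem, which I would cite rather than reprove. Granting it, the only real subtleties are bookkeeping ones—checking that proper base change and the unit computation identify the correct terms in the Kummer sequences (the hypothesis $H^0(\XX_s,\GG_m)=k^\unit$ is precisely what makes the left-hand vertical map an isomorphism), and verifying in part~(iii) that the vertical principal divisors are exactly the multiples of $\divisor(\pi)$, which hinges on the properness of $X$ over $K$.
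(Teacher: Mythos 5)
Your proposal is correct, and parts (i) and (ii) run essentially as in the paper: (ii) is the same Kummer-sequence comparison, with proper base change for the middle column, the $n$-divisibility of the principal units $1+(\pi)$ for the $H^0$ column, and the vanishing of the two Brauer groups to handle $i=2$. For (i) the paper's route is slightly different in emphasis: it first identifies $\Br(\XX)$ with $\Br(\XX_s)$ via Grothendieck's theorem, then kills $\Br(\XX_s)$ by class field theory on each component $C_i$ together with a lemma of Saltman that passes from the components to the (possibly reducible, normal-crossings) curve $\XX_s$. Your fallback for $\Br(\XX_s)=0$ is only spelled out in the smooth irreducible case; for the general statement of the lemma you would need the reducible case too, so you should either cite the component-by-component reduction explicitly or lean entirely on Artin's theorem for proper curves over finite fields. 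This is a citation issue rather than a mathematical gap.

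Part (iii) is where you genuinely diverge, and to your advantage. The paper obtains exactness at the two left-hand terms by appealing to the intersection theory of arithmetic surfaces (the negative semi-definiteness of the intersection matrix of the fibre, via Liu IX.1.23), whereas you compute the kernel of $\ZZ^r\to\Pic(\XX)$ directly: a principal divisor supported on $\XX_s$ comes from an $f$ with trivial divisor on the proper generic fibre, hence $f\in H^0(X,\sheaf O_X)^\unit=K^\unit$, hence $f=u\pi^m$ with $u$ a global unit on $\XX$ (using $f_*\sheaf O_\XX=\sheaf O_R$), so the kernel is exactly $\ZZ\cdot(a_1,\dots,a_r)$. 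This is more elementary and self-contained than the paper's citation, at the cost of using the geometric connectedness of $X$ (to get $H^0(X,\sheaf O_X)=K$) rather than the connectedness of the fibre; both hypotheses are in force here, so either route is legitimate. The argument is complete and correct.
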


\begin{proof}
  Item (i) follows from the canonical isomorphism
  $\Br(\XX) = \Br(\XX_s)$ (\cite{GB} th\'eor\`eme~3.1, p.~98), and the
  fact that, by class field theory, $\Br(C_i) = 0$ for all $i$, hence
  $\Br(\XX_s) = 0$ as well by \cite{Saltman-p-adic}, lemma~3.2, p.40.

  To show (ii), note that for $i\ge 1$ the Kummer sequence
  $1 \to \mu_n \to \GG_m \stackrel{n}{\to} \GG_m \to 1$ gives a
  commutative diagram with exact rows (the vertical arrows are
  restrictions)
  \begin{center}
    \begin{tikzcd}
      0 \arrow{r}& H^{i-1}(\XX, \GG_m)/n \arrow{d}\arrow{r}&  H^i(\XX,\mu_n) \arrow{d}[swap]{\cong}{\text{PBC}}\arrow{r}&  H^i(\XX,\GG_m)[n] \arrow{d}\arrow{r} & 0\\
      0 \arrow{r}& H^{i-1}(\XX_s, \GG_m)/n \arrow{r}& H^i(\XX_s,\mu_n) \arrow{r}&  H^i(\XX_s,\GG_m)[n] \arrow{r}& 0
    \end{tikzcd}
  \end{center}
  The middle vertical arrow is an isomorphism by proper base change
  (\cite{Milne}, corollary~VI.2.7, p.~224).  For $i=1$, since the
  subgroup $U^{(1)} \df= 1 + (\pi)$ of $R^\unit$ is $n$-divisible by
  Hensel's lemma, from the exact sequence
  $1 \to U^{(1)} \to R^\unit \to k^\unit \to 1$ the leftmost vertical
  arrow is also an isomorphism
  $$H^0(\XX, \GG_m)/n = R^\times/n \isoto k^\unit/n = H^0(\XX_s, \GG_m)/n$$
  (recall that $f\colon \XX \to \Spec R$ satisfies
  $f_*\sheaf{O}_{\XX} = \sheaf{O}_R$ and $H^0(\XX_s, \GG_m) =k^\unit$ by
  hypothesis). Hence the rightmost vertical arrow is an isomorphism
  $\Pic(\XX)[n] \isoto \Pic(\XX_s)[n]$.

  For $i=2$, by (i) both groups $\Br(\XX) = H^2(\XX, \GG_m)$ and
  $\Br(\XX_s) = H^2(\XX_s, \GG_m)$ vanish, and we get the remaining
  isomorphisms to be shown.

  To show (iii), observe that exactness at the two rightmost terms
  follows from \cite{Fulton}, proposition~1.8, p.~21 applied to the
  closed subscheme $\XX_s$ and its open complement
  $X = \XX \smallsetminus \XX_s$, while exactness at the two leftmost
  terms follows from the intersection theory for arithmetic surfaces
  (see \cite{Liu}, \S9.1.2, p.381, in particular theorem~IX.1.23 on
  p.385).
\end{proof}


\subsection{Kummer-Artin-Schreier-Witt theory}\label{sec:kumm-artin-schr}

Let $p$ be a prime number, $\zeta_2\in \mu_{p^2}$ be a primitive
$p^2$-th root of unity, and $A = \ZZ_{(p)}[\zeta_2]$, a discrete
valuation ring with uniformizer $\lambda_2=\zeta_2-1$ and residue
field $\FF_p$.  In \cite{SekiguchiSuwa}, Sekiguchi and Suwa
constructed an exact sequence of group schemes over $A$
\begin{equation}\label{eq:3}
0 \longrightarrow \ZZ/ p^{2} \longrightarrow \mathcal{W}_{2} \stackrel{\Psi}{\longrightarrow} \mathcal{V}_{2} \rightarrow 0
\end{equation}
whose special fiber is isomorphic to the Artin-Schreier-Witt sequence
in characteristic $p>0$, while its generic fiber is isomorphic to a
Kummer type sequence, thus giving an explicit interpolation between
the Artin-Schreier-Witt and Kummer theories.  In order to describe
Sekiguchi and Suwa's construction, set
$$ 
\zeta = \zeta_2^p,\quad \lambda=\zeta-1,\quad \lambda_2=\zeta_2-1,\quad
\eta=\sum_{k=1}^{p-1} \frac{(-1)^{k-1}}{k} \lambda_{2}^{k}
,\quad\tilde{\eta}=\frac{\lambda^{p-1}}{p}(p \eta-\lambda)
$$
and consider the polynomials in $A[T]$ ($p$-truncated exponential series)
\begin{equation}\label{eq:11}
F(T)=\sum_{k=0}^{p-1} \frac{(\eta T)^{k}}{k !}, \quad G(T)=\sum_{k=0}^{p-1} \frac{(\tilde\eta T)^{k}}{k !}
\end{equation}
The group schemes $\mathcal{W}_{2}$ and $\mathcal{V}_{2}$ are given by
\begin{align*}
  \mathcal{W}_{2} & =\Spec A\left[T_{0}, T_{1}, \frac{1}{\lambda T_{0}+1}, \frac{1}{\lambda T_{1}+F\left(T_{0}\right)}\right] \\
  \mathcal{V}_{2} &=\Spec A\left[T_{0}, T_{1}, \frac{1}{\lambda^{p} T_{0}+1}, \frac{1}{\lambda^{p} T_{1}+G\left(T_{0}\right)}\right]
\end{align*}
The comultiplication in $\mathcal{W}_{2}$ is given by 
$$ 
\left(T_{0}, T_{1}\right) \mapsto\left(\Lambda_{0}^{F}\left(T_{0}
    \otimes 1,1 \otimes T_{0}\right), \Lambda_{1}^{F}\left(T_{0}
    \otimes 1, T_{1} \otimes 1,1 \otimes T_{0}, 1 \otimes
    T_{1}\right)\right)
$$
and the one in $\mathcal{V}_{2}$ by
$$ 
\left(T_{0}, T_{1}\right) \mapsto\left(\Lambda_{0}^{G}\left(T_{0}
    \otimes 1,1 \otimes T_{0}\right), \Lambda_{1}^{G}\left(T_{0}
    \otimes 1, T_{1} \otimes 1,1 \otimes T_{0}, 1 \otimes
    T_{1}\right)\right)
$$
where $\Lambda_i^F$ and $\Lambda_i^G$ are the polynomials with
coefficients in $A$ given by
$$
\Lambda_{0}^{F}\left(X_{0}, Y_{0}\right)=\lambda X_{0} Y_{0}+X_{0}+Y_{0}, \quad \Lambda_{0}^{G}\left(X_{0}, Y_{0}\right)=\lambda^{p} X_{0} Y_{0}+X_{0}+Y_{0}
$$
\begin{align*}
  \Lambda_{1}^{F}\left(X_{0}, X_{1}, Y_{0}, Y_{1}\right) &= \lambda X_{1} Y_{1}+X_{1} F\left(Y_{0}\right)+F\left(X_{0}\right) Y_{0}\\
                                                         &\qquad+\frac{1}{\lambda}\left[F\left(X_{0}\right) F\left(Y_{0}\right)-F\left(\lambda X_{0} Y_{0}+X_{0}+Y_{0}\right)\right]\\
\Lambda_{1}^{G}\left(X_{0}, X_{1}, Y_{0}, Y_{1}\right)&= \lambda^{p} X_{1} Y_{1}+X_{1} G\left(Y_{0}\right)+G\left(X_{0}\right) Y_{0} \\
                                                         &\qquad+\frac{1}{\lambda^{p}}\left[G\left(X_{0}\right) G\left(Y_{0}\right)-G\left(\lambda^{p} X_{0} Y_{0}+X_{0}+Y_{0}\right)\right]
\end{align*}
Finally the isogeny $\Psi\colon \mathcal{W}_{2} \to \mathcal{V}_{2}$
is given by $(T_{0}, T_{1}) \mapsto\left(\Psi_{0}\left(T_{0}\right), \Psi_{1}\left(T_{0}, T_{1}\right)\right)$
where $\Psi_i \in A\left[T_{0}, T_{1}, 1 /\left(\lambda  T_{0}+1\right), 1 /\left(\lambda  T_{1}+F\left(T_{0}\right)\right)\right]$ 
are given by
\begin{align*}
  \Psi_{0}\left(T_{0}\right) &=\frac{\left(\lambda T_{0}+1\right)^{p}-1}{\lambda^{p}}\\
\Psi_{1}\left(T_{0}, T_{1}\right) &=\frac{1}{\lambda^{p}}\left[\frac{\left(\lambda T_{1}+F\left(T_{0}\right)\right)^{p}}{\lambda T_{0}+1}-G\left(\frac{\left(\lambda T_{0}+1\right)^{p}-1}{\lambda^{p}}\right)\right]
\end{align*}
As shown in \cite{SekiguchiSuwa}, the special fiber of \eqref{eq:3} is
isomorphic to the Artin-Schreier-Witt sequence (here $W_2$ denotes the
group of Witt vectors of length~$2$ and $F$, the Frobenius map)
$$ 
0 \longrightarrow \ZZ/p^{2} \longrightarrow W_{2} \stackrel{F-\id}{\longrightarrow} W_{2} \longrightarrow 0
$$
Moreover, there is a commutative diagram of group schemes over $A$
\begin{center}
  \begin{tikzcd}
    \mathcal{W}_{2} \arrow{d}[swap]{\alpha^{(F)}} \arrow{r}{\Psi}& \mathcal{V}_{2} \arrow{d}{\alpha^{(G)}}\\
    \GG_{m, A}^2 \arrow{r}{\Theta} & \GG_{m, A}^2
  \end{tikzcd}
\end{center}
whose vertical arrows become isomorphisms over $\Spec (\Frac A)$.
Here, if $\GG_{m, A}^2 = \Spec A[U_0, U_1, 1/U_0, 1/U_1]$, the maps
$\alpha^{(F)}$ and $\alpha^{(G)}$ are respectively given by
$$\left(U_{0}, U_{1}\right) \mapsto\left(\lambda T_{0}+1, \lambda
  T_{1}+F\left(T_{0}\right)\right)
\quad\text{and}\quad
\left(U_{0}, U_{1}\right) \mapsto\left(\lambda^{p} T_{0}+1,
  \lambda^{p} T_{1}+G\left(T_{0}\right)\right)
$$
and $\Theta$, by
$(U_{0}, U_{1}) \mapsto (U_{0}^{p}, U_{0}^{-1} U_{1}^{p})$.

Let us give the local description of how to lift Artin-Schreier-Witt
extensions to Kummer ones.  Consider the polynomial
\begin{equation}\label{eq:10}
c(X, Y)=\frac{X^{p}+Y^{p}-(X+Y)^{p}}{p} \in \ZZ[X, Y]
\end{equation}
Let $B$ be an $A$-algebra, and write
$\overline B \df= B\tensor_A \FF_p = B/(\lambda_2)$ for its reduction
modulo $\lambda_2$.  To give an Artin-Schreier-Witt extension of
$\overline B$ is the same as to give a morphism
$\overline f\colon \Spec \overline B \to \mathcal{V}_2\tensor_A \FF_p
= \Spec \FF_p[T_0, T_1]$, i.e., to give a pair
$(\overline b_0, \overline b_1)\in \overline B^2$ so that
$\overline f$ is defined by
$(T_0, T_1)\mapsto (\overline b_0, \overline b_1)$; then the
corresponding Artin-Schreier-Witt extension is given by taking the
pullback
$\Spec(\overline B)\times_{\mathcal{V}_2} \mathcal{W}_2 \to \Spec
(\overline B)$ of $\Psi$, i.e., it is the extension
$\overline B[x_0, x_1] \supset \overline B$ defined by equations
$$x_0^p - x_0 = \overline b_0,\qquad x_1^p - x_1 - c(x_0^p, - x_0) =
\overline b_1
$$
Any lift $f\colon \Spec B \to \mathcal{V}_2$ of $\overline f$ now
defines a Kummer extension lifting the above Artin-Schreier-Witt one.
Namely, let $(b_0, b_1) \in B^2$ be any lift
$(\overline b_0, \overline b_1)$ with $\lambda^{p} b_{0}+1$ and
$\lambda^{p} b_{1}+G(b_{0})$ units in $B$.  Then
$(T_0, T_1)\mapsto (b_0, b_1)$ defines a morphism
$f\colon \Spec B \to \mathcal{V}_2$, and taking the pullback
$\Spec(B)\times_{\mathcal{V}_2} \mathcal{W}_2 \to \Spec(B)$ of $\Psi$
with respect to $f$ we get a Kummer extension $B[t_0, t_1] \supset B$
via equations
$$\frac{\left(\lambda t_{0}+1\right)^{p}-1}{\lambda^{p}} = b_0,\qquad
\frac{1}{\lambda^{p}}\left[\frac{\left(\lambda t_{1}+F\left(t_{0}\right)\right)^{p}}{\lambda t_{0}+1}-G(b_0)\right]=b_1
$$
or, written in a more Kummerish style,
$$(\lambda t_{0}+1)^{p} = 1 + b_0\lambda^{p},\qquad
(\lambda t_{1}+F\left(t_{0}\right))^{p} = (\lambda t_{0}+1)(b_1\lambda^{p} +G(b_0))
$$
so that $B[t_0, t_1] \supset B$ is built by succesively taking $p$-th
roots of certain elements.


\section{Cyclicity for curves with good reduction}

In this section, our goal is to show

\begin{theorem}\label{thm:main}
  Assume that $X$ has good reduction, and let $\XX \to \Spec R$ be a
  smooth integral model.  Let $n\in \NN$ with $p\nmid n$.  The
  ramification map with respect to the prime divisor $\XX_s$ induces
  an isomorphism
  $$\Br(X)[n] = H^1(\XX_s, \ZZ/n)$$
  Hence all elements of $\Br(X)[n]$ are cyclic, namely given any
  uniformizer $\pi\in K$, they have the form
  $\delta_n\pi \cup \tilde\chi$ where
  $\tilde \chi \in H^1(\XX, \ZZ/n)$ is the unique lift of the
  ramification $\chi \in H^1(\XX_s, \ZZ/n)$ of $\beta$ with respect to
  $\XX_s$.
\end{theorem}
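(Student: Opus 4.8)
The plan is to identify $\Br(X)[n]$ with the unramified characters of the special fiber through the single residue $\partial_{\XX_s}$, combining the vanishing $\Br(\XX)=0$ with a reciprocity argument at the closed points, and then to exhibit an explicit cyclic preimage via the tame cup product $\delta_n\pi\cup\tilde\chi$. First I would set up the residue at the vertical divisor. Since $\XX$ is smooth over $R$, the special fibre $\XX_s=\divisor(\pi)$ is an irreducible regular divisor of multiplicity one, so $\pi$ is a uniformizer for $v_{\XX_s}$ and the residue field is the global function field $F\df=\kappa(\XX_s)=k(\XX_s)$. Under the bijection of lemma~\ref{lemma:integral-models}(ii) the residue $\partial_x$ at a closed point $x\in X_0$ coincides with the residue at the corresponding horizontal divisor of $\XX$, so \eqref{eq:9} identifies $\Br(X)[n]$ with the classes in $\Br(K(X))[n]$ unramified at every horizontal divisor. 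Applying lemma~\ref{lemma:Brauer}(i) to $Z=\XX$ and invoking $\Br(\XX)[n]=0$ (lemma~\ref{lemma:cft}(i)), a class unramified at \emph{all} prime divisors vanishes; hence the restriction of $\partial_{\XX_s}$ to $\Br(X)[n]$ is injective into $H^1(F,\QQ/\ZZ)[n]=H^1(F,\ZZ/n)$.

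Next I would pin down the image. The closed points of $\XX$ are precisely the closed points of $\XX_s$, so for each $P\in\XX_s$ I apply the complex of lemma~\ref{lemma:Brauer}(ii) to the excellent regular local ring $\sheaf{O}_{\XX,P}$, whose residue field has characteristic $p\nmid n$. The only divisors through $P$ are $\XX_s$ and horizontal ones; since $\beta\in\Br(X)[n]$ kills all horizontal residues, the complex forces the residue of $\chi\df=\partial_{\XX_s}(\beta)$ at the point $P$ of the curve $\XX_s$ to vanish. As $P$ is arbitrary, $\chi$ is unramified at every closed point of the smooth curve $\XX_s$, hence lies in $H^1(\XX_s,\ZZ/n)$ by purity (the degree-one analogue of \eqref{eq:9}). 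Thus $\partial_{\XX_s}$ embeds $\Br(X)[n]$ into $H^1(\XX_s,\ZZ/n)$.

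Finally I would establish surjectivity together with the explicit cyclic form. Given $\chi\in H^1(\XX_s,\ZZ/n)$, proper base change for $\ZZ/n$ (valid as $n\in R^\unit$), exactly as in lemma~\ref{lemma:cft}(ii), yields a unique lift $\tilde\chi\in H^1(\XX,\ZZ/n)$; set $\beta=\delta_n\pi\cup\tilde\chi\in\Br(K(X))[n]$. At any horizontal divisor $x\in X_0$ the valuation $v_x$ is trivial on $K$, so $v_x(\pi)=0$ and $\tilde\chi$ is unramified, whence $\partial_x\beta=0$ and $\beta\in\Br(X)[n]$. At $\XX_s$, since $\pi$ is a uniformizer and $\tilde\chi$ restricts to $\chi$ over $F$, the class $\delta_n\pi\cup\tilde\chi$ is by construction the image of $\chi$ under the section of $\partial_{v_{\XX_s}}$ in the Witt decomposition, so $\partial_{\XX_s}\beta=\chi$. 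Hence $\partial_{\XX_s}$ is onto $H^1(\XX_s,\ZZ/n)$ and therefore an isomorphism; applying this to $\chi=\partial_{\XX_s}(\beta)$ for an arbitrary $\beta$ recovers $\beta=\delta_n\pi\cup\tilde\chi$, proving cyclicity.

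I expect the main obstacle to be the image computation of the second paragraph: showing that the residue character is unramified on the \emph{whole} special curve, and not merely at the generic point, requires the local reciprocity of lemma~\ref{lemma:Brauer}(ii) at every closed point together with the $H^1$-purity needed to re-algebraize $\chi$ as an étale class on $\XX_s$ rather than a bare character of $k(\XX_s)$; the injectivity and the explicit symbol, by contrast, follow quite directly from $\Br(\XX)=0$ and the Witt decomposition.
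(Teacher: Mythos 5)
Your proposal is correct and follows essentially the same route as the paper's proof: unramifiedness of $\chi=\partial_{\XX_s}(\beta)$ at each closed point $P\in\XX_s$ via lemma~\ref{lemma:Brauer}(ii) applied to $\sheaf{O}_{\XX,P}$, injectivity from $\Br(\XX)=0$ via lemmas~\ref{lemma:Brauer}(i) and \ref{lemma:cft}(i), and surjectivity by lifting $\chi$ through proper base change and taking $\delta_n\pi\cup\tilde\chi$. The only (harmless) additions are your explicit check that $\delta_n\pi\cup\tilde\chi$ is unramified at all horizontal divisors and the remark on $H^1$-purity for $\XX_s$, both of which the paper leaves implicit.
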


\begin{proof}
  Let $k(\XX_s)$ be the function field of the special fiber $\XX_s$,
  and denote by
  $\partial_s\colon \Br(K(X))[n] \to H^1(k(\XX_s),\ZZ/n)$ the
  corresponding ramification map.  First we show that $\partial_s$
  restricts to a map
  $\partial_s\colon \Br(X)[n] \to H^1(\XX_s, \ZZ/n)$, i.e., that for
  $\beta \in \Br(X)[n]$ the character $\chi = \partial_s (\beta)$ is
  unramified with respect to any closed point $P\in\XX_s$.  From
  sequence~\eqref{eq:9} we get $\partial_x (\beta) = 0$ for all
  $x\in X_0$, thus also $\partial_D (\beta) = 0$ for all horizontal
  prime divisors $D$ of $\XX$ by
  lemma~\ref{lemma:integral-models}(ii).  Applying
  lemma~\ref{lemma:Brauer}(ii) with $A = \sheaf{O}_{\XX,P}$ we get
  that $\partial_P (\chi) = 0$, i.e., $\chi$ is unramified at $P$, as
  desired.

  Next, given $\chi \in H^1(\XX_s, \ZZ/n)$ let
  $\tilde\chi\in H^1(\XX, \ZZ/n)$ be its unique lift under the proper
  base change isomorphism $H^1(\XX, \ZZ/n) = H^1(\XX_s, \ZZ/n)$
  (\cite{Milne}, corollary~VI.2.7, p.~224).  Since $\pi$ is a
  uniformizer for the valuation defined by $\XX_s$ and $\tilde \chi$
  is $\pi$-unramified, we have
  $\partial_s(\delta_n \pi \cup \tilde\chi) = \chi$, showing that
  $\partial_s\colon \Br(X)[n] \onto H^1(\XX_s, \ZZ/n)$ is surjective.
  Finally, if $\beta \in \Br(X)[n]$ is such that
  $\partial_s(\beta) = 0$, then $\beta$ in unramified on the whole
  scheme $\XX$, thus $\beta \in \Br(\XX) = 0$ by
  lemmas~\ref{lemma:Brauer}(i) and \ref{lemma:cft}(i).  This proves
  $\partial_s\colon \Br(X)[n] \into H^1(\XX_s, \ZZ/n)$ is also
  injective.
\end{proof}

\begin{remark}
  The previous theorem can also be obtained using Lichtembaum's
  duality.  We briefly sketch the proof.  First recall that from Class
  Field Theory of global fields of positive characteristic (see
  \cite{MilneAD}, chapter~I, appendix~A), for any smooth projective
  curve $Z$ over a finite field $k$, there is a non-degenerate pairing
  of finite groups
  \begin{equation}\label{eq:15}
    H^1(Z, \ZZ/n) \times \Pic(Z)/n \to \ZZ/n
  \end{equation}
  yielding an isomorphism $H^1(Z, \ZZ/n) = (\Pic(Z)/n)^\vee$.  This
  pairing is induced by the local perfect pairings
  $$H^1(\widehat{k(Z)}_z, \ZZ/n) \times H^1(\widehat{k(Z)}_z, \mu_n)\to
  \Br(\widehat{k(Z)}_z)[n] = \ZZ/n
  $$
  (here $\widehat{k(Z)}_z$ stands for the completion of the function
  field $k(Z)$ of $Z$ with respect to the valuation given by a closed
  point $z\in Z_0$).
  
  Explicitly, given $\chi \in H^1(Z, \ZZ/n)$ and a divisor
  $\sum_z n_z z \in \Div(Z)$ representing a given
  $\alpha\in \Pic(Z)/n$, the pairing \eqref{eq:15} maps
  $(\chi, \alpha)$ to
  $\sum_z n_z \cdot \chi|_{\kappa(z)}(\mathord{\text{Frob}}_z)$ where
  $\mathord{\text{Frob}}_z\in G_{\kappa(z)}$ is the Frobenius
  automorphism (here $\kappa(z)$ denotes the residue field of $z$, a
  finite field).

  Now to get the theorem we just splice together Lichtenbaum's duality
  (theorem~\ref{thm:lichtembaums-duality}), lemma~\ref{lemma:cft}, the
  isomorphism $H^1(\XX_s, \ZZ/n) = (\Pic(\XX_s)/n)^\vee$ given by
  \eqref{eq:15} and the proper base change theorem:
  \begin{align*}
    \Br(X)[n] &\stackrel{\ref{thm:lichtembaums-duality}}{=} \Hom(\Pic(X)/n, \ZZ/n) \stackrel{\ref{lemma:cft}(iii)}{=} \Hom(\Pic(\XX)/n, \ZZ/n)\\
              &\stackrel{\ref{lemma:cft}(ii)}{=} \Hom(\Pic(\XX_s)/n,
                \ZZ/n) \stackrel{\eqref{eq:15}}{=} H^1(\XX_s, \ZZ/n) \stackrel{\text{\rm PBC}}{=} H^1(\XX, \ZZ/n)
  \end{align*}
  Given the explict descriptions of \eqref{eq:15} and Lichtembaum's
  pairing, it is not difficult to check that this chain of
  isomorphisms is just the ramification map $\partial_s$ above.
\end{remark}


\section{Non-cyclic division algebras in the singular case}\label{sec:non-cyclic-division}

In this section, we show that for a general $p$-adic curve $X$ with
bad reduction, one cannot expect $\Br(X)[n]$ to be comprised solely of
$\ZZ/n$-cyclic algebras.  In example~\ref{eg:noncyclic} below, for
infinitely many primes $p$, we exhibit a non $\ZZ/3$-cyclic class of
period~$3$ in the Brauer group of a certain $p$-adic curve of
genus~$10$.  Both for conceptual clarity and in order to facilitate
the construction of other similar examples, we work in a more general
situation, as described in the following

\begin{setup}\label{setup:ceg}
  Let $d\in \NN$ be prime to $p$, and suppose that $\mu_d \subset K$.
  Choose homogenous polynomials $F_1, F_2\in R[x, y, z]$ with
  $\deg F_i = d$ such that
  \begin{enumerate}[(i)]
  \item in $\PP^2_R$ the closed subschemes $V_+(F_1)$ and $V_+(F_2)$
    intersect the line $V_+(z)$ at two $R$-points $\infty_1$,
    $\infty_2$, each with multiplicity $d$, and such that
    $\overline \infty_1 \ne \overline \infty_2$.  Here $\overline
    \infty_i$ denotes the special fiber of $\infty_i$;

  \item the reductions $\overline F_1, \overline F_2 \in k[x, y, z]$
    modulo $\pi$ define geometrically connected smooth curves over $k$
    $$C_1 \df= \Proj k[x, y, z]/(\overline F_1)
    \qquad
    C_2\df= \Proj k[x, y, z]/(\overline F_2)
    $$
    having normal crossings in $\PP^2_{k}$, with at least one
    $k$-rational common point $P_0\in C_1\cap C_2$;

  \item $\Pic(C_1)[d] \ne 0$.
  \end{enumerate}
  Define the integral projective $R$-scheme
  $$\XX \df= \Proj \frac{R[x, y, z]}{(F_1\cdot F_2 - \pi z^{2d})} \to \Spec R
  $$
  and the elements $f_1 \df= F_1/z^d , f_2 \df= F_2/z^d \in K(\XX)$ in
  its function field $K(\XX)$.  Finally, let $u\in R^\unit$ be any
  unit which is not a $d$-th power.
\end{setup}

\begin{example}\label{eg:noncyclic}
  As a concrete example, we may choose $d=3$ and a prime
  $p\equiv 1 \pmod 3$, so that there is a primitive third root of
  unity $\omega\in \ZZ_p$ by Hensel's lemma.  We will show that for
  almost all $p$ the polynomials in $\ZZ_p[x, y, z]$
  \begin{align*}
    F_1 &= y^2z + yz^2 - x^3 - x^2z + 7xz^2 -5 z^3\\
    F_2 &= x^2z - y^3 + 26z^3
  \end{align*}
  will satisfy the conditions of the setup (the curve
  $V_+(F_1) \subset \PP^2_{\QQ}$ is the elliptic curve labeled 91.b2
  in the LMFDB database, see \texttt{http://www.lmfdb.org}).  Let
  $\XX = V_+(F_1F_2 - pz^6) \subset \PP_{\ZZ_p}^2$ and
  $f_i = F_i/z^3 \in K(\XX)$ as above.  Then, in $\PP^2_{\ZZ_p}$, the
  subschemes $V_+(F_1)$ and $V_+(F_2)$ intersect the line $V_+(z)$ at
  the $R$-points $\infty_1 = V_+(x,z)$ and $\infty_2 = V_+(y,z)$, both
  with multiplicity~$3$.

  If $p\ne 2,3,7,13$ then both reductions $\overline F_i$ will be
  elliptic curves over $\FF_p$ sharing a common $\FF_p$-rational point
  $P_0 = (-1:3:1) \in C_1\cap C_2$.  Moreover the resultant of
  $F_1(x, y, 1)$ and $F_2(x, y, 1)$ with respect to the variable $x$
  is
  \begin{equation}\label{eq:13}
    (y - 3)(y^8 + 3y^7 + 9y^6 - 66y^5 - 196y^4 - 587y^3 + 1084y^2 +
    3209y + 9585)
  \end{equation}  
  which is separable over $\FF_p$ as long as
  $p\ne 2, 3, 37, 97, 29723, 447692787897013$.  Then by Bézout's
  theorem each of the $3^2 = 9$ roots of \eqref{eq:13} will correspond
  to a different multiplicity~$1$ intersection point of $C_1$ and
  $C_2$, thus for almost all $p$ the curves $C_1$ and $C_2$ will be
  regular and have normal crossings.  Finally, $\Pic(C_1)[3] \ne 0$;
  in fact, if $P_1 = (1:0:1)$ then
  $[P_1] - [\overline\infty_1] \in \Pic^0(C_1)[3]$ is a nontrivial
  element.
\end{example}

We will need the following result concerning the Picard group of
reducible curves such as $\XX_s$ (see \cite{Hida}, theorem~4.1.5,
chapter~4, p.296):

\begin{lemma}\label{lemma:non-cyclic-division}
   Let $k$ be a field, and $C = C_1 \cup C_2$ be the union of two
   proper smooth irreducible curves over $k$ such that its components
   intersect transversally at $r$ points over a finite extension of
   fields $l \supset k$.  Then there is a split exact sequence
  $$0 \to T(k) \to \Pic^0(C) \to \Pic^0(C_1) \oplus \Pic^0(C_2) \to 0
  $$
  where $T$ is an algebraic torus that becomes isomorphic to
  $\GG_m^{r-1}$ over $l$.
\end{lemma}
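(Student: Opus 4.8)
The plan is to compute $\Pic^0(C)$ by comparing $C$ with its normalization. Since $C_1,C_2$ are already smooth, the normalization is the disjoint union $\nu\colon\widetilde C=C_1\sqcup C_2\to C$, a finite map that is an isomorphism away from the nodes and that at a node $P$ glues a point $a_P\in C_1$ to a point $b_P\in C_2$. Comparing invertible functions upstairs and downstairs yields the fundamental short exact sequence of sheaves on $C$
$$1\to\GG_{m,C}\xrightarrow{\ \nu^\ast\ }\nu_\ast\GG_{m,\widetilde C}\to\mathcal Q\to1,$$
where $\mathcal Q$ is the skyscraper sheaf supported at the nodes, with stalk $\kappa(P)^\unit$ at $P$, and the right-hand map records at each node the ratio of the two branch-values of a function. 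I will assume each $C_i$ is geometrically connected, so that $H^0(C_i,\GG_m)=k^\unit$ and, $C$ being connected, $H^0(C,\GG_m)=k^\unit$ (this holds in our application, where the $C_i$ are elliptic curves; the general case only alters the group of global units).

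Next I would pass to cohomology. As $\nu$ is finite the Leray spectral sequence degenerates, giving $H^1(C,\nu_\ast\GG_{m,\widetilde C})=\Pic(C_1)\oplus\Pic(C_2)$, while $\mathcal Q$ is flasque so $H^1(C,\mathcal Q)=0$. The resulting long exact sequence is
$$1\to k^\unit\to k^\unit\times k^\unit\to\bigoplus_P\kappa(P)^\unit\xrightarrow{\ \partial\ }\Pic(C)\xrightarrow{\ \res\ }\Pic(C_1)\oplus\Pic(C_2)\to0,$$
the second map sending $(a,b)$ to the family of ratios $(a/b)_P$. Restricting $\res$ to the multidegree-$(0,0)$ part, which is exactly $\Pic^0$, the vanishing $H^1(C,\mathcal Q)=0$ gives surjectivity onto $\Pic^0(C_1)\oplus\Pic^0(C_2)$, and $\ker\res=\im\partial$ is the cokernel of the diagonal embedding $k^\unit\hookrightarrow\bigoplus_P\kappa(P)^\unit$. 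I would then identify this cokernel as $T(k)$ for the torus $T$ sitting in $1\to\GG_m\to\prod_P\mathrm{Res}_{\kappa(P)/k}\GG_m\to T\to1$: Hilbert's Theorem 90 gives $T(k)=\bigl(\bigoplus_P\kappa(P)^\unit\bigr)/k^\unit$. Over $l$ all nodes become rational and the only units are constants, so the map degenerates to the constant diagonal $l^\unit\to(l^\unit)^r$ and $T_l\cong\GG_m^{r-1}$, as required.

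The remaining point, and the genuine crux, is the splitting of
$$0\to T(k)\to\Pic^0(C)\xrightarrow{\ \res\ }\Pic^0(C_1)\oplus\Pic^0(C_2)\to0.$$
A section would be a choice, depending multiplicatively on $(L_1,L_2)$, of gluing isomorphisms $(L_1)_{a_P}\xrightarrow{\sim}(L_2)_{b_P}$ at the nodes. The naive attempt---represent $L_i$ by a divisor on the smooth locus $C_i\smallsetminus\{\text{nodes}\}$ and extend it to a Cartier divisor on $C$---fails to descend to Picard groups, precisely because a principal divisor on $C_i$ lifts to a principal divisor on $C$ only when its branch-values coincide at every node, the discrepancy landing in $T(k)$. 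In other words, as algebraic groups $\Pic^0(C)$ is a nontrivial semiabelian variety (its class in $\mathrm{Ext}^1(\Pic^0(C_1)\times\Pic^0(C_2),T)$ is assembled from the differences $a_P-a_{P'}$ and $b_P-b_{P'}$), so the assertion here is an arithmetic statement about the abelian groups of $k$-points, which I expect to be the hard part. Over the finite residue field of our application Lang's theorem gives $H^1(k,T)=0$, reproving the surjectivity above, and I would extract the homomorphic section itself from the structure theory of Picard schemes of nodal curves, which is exactly the content of the cited \cite{Hida}, Theorem~4.1.5.
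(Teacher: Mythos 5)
Your normalization argument is correct and is the standard route; the paper itself gives no argument at all for this lemma, only the citation of Hida, so there is nothing in the text to compare you against step by step. The exact sequence $1\to\GG_{m,C}\to\nu_*\GG_{m,C_1\sqcup C_2}\to\mathcal{Q}\to 1$, the vanishing of $H^1$ of the skyscraper $\mathcal{Q}$, the identification of the kernel of $\Pic^0(C)\to\Pic^0(C_1)\oplus\Pic^0(C_2)$ with $\bigl(\bigoplus_P\kappa(P)^\unit\bigr)/k^\unit=T(k)$ via Hilbert~90, and the computation $T_l\cong\GG_m^{r-1}$ are all sound. The one thing you do not prove --- the splitting --- is, as you correctly diagnose, the entire nontrivial content of the cited theorem, and it is also the only part of the lemma the paper actually uses: in item (e) of the following lemma, surjectivity of $\Pic(\XX_s)[d]\onto\Pic^0(C_1)[d]\oplus\Pic^0(C_2)[d]$ on $d$-torsion is exactly what the splitting buys, since from the short exact sequence alone the obstruction lives in $T(k)/d$, which need not vanish over a finite field. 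So your write-up is consistent with the paper in that both ultimately rest on Hida for the splitting; just be aware that you have supplied an independent proof only of the exactness and of the structure of $T$, while the half that the application genuinely needs remains delegated to the reference, exactly as in the paper.
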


\begin{lemma}
  Assume setup~\ref{setup:ceg}.  Then
  \begin{enumerate}[(a)]
  \item $\XX$ is a regular scheme;

  \item The generic fiber
    $X \df= \XX_\eta = \Proj K[x, y, z]/(F_1F_2 - \pi z^{2d})$ of
    $\XX$ is a smooth projective curve over $K$;

  \item The special fiber
    $\XX_s = \Proj k[x, y, z]/(\overline F_1 \overline F_2)$ of $\XX$
    is a reduced $k$-scheme whose irreducible components are $C_1$ and
    $C_2$;
  
  \item The divisors $[C_1]$ and $[C_2]$ in $\Div(\XX)$ are principal
    divisors modulo $d$, more precisely
    \begin{align*}
      \divisor(f_1) &= [C_1] + d^2\cdot [\infty_1] - d^2\cdot [\infty_2]\\
      \divisor(f_2) &= [C_2] + d^2\cdot [\infty_2] - d^2\cdot [\infty_1]
    \end{align*}
    In particular, $\Pic(\XX)$ can be generated by horizontal divisors
    only (see definition before lemma~\ref{lemma:integral-models});
  
  \item There exists $h\in K(X)^\unit$ such that
    \begin{itemize}
    \item $\divisor(h) = d E$ for some $E\in \Div(\XX)$;
    \item the restriction $\overline h\in k(C_1)^\unit$ to the function
      field of $C_1$ is well-defined;
    \item $\overline h$ is not a $d$-th power in $k(C_1)^\unit$;
    \item $P_0$ is not a pole of $\overline h$, and
      $\overline h(P_0) = \overline 1 \in k(P_0)^\unit$ (here $k(P_0)$
      denotes the residue field of $P_0$).
    \end{itemize}
  \end{enumerate}
\end{lemma}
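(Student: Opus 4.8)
The plan is to handle (a)--(c) by one local computation on the model, read off (d) from the relation $f_1f_2=\pi$, and construct the function in (e) by lifting a $d$-torsion divisor class off the component $C_1$. For (a), since $\XX$ is proper over the local ring $R$ its closed points all lie in $\XX_s$, so I only need regularity there. Writing $G=F_1F_2-\pi z^{2d}$ for the defining form, I would check $G\notin\mathfrak{m}_P^2$ in the regular local ring $\sheaf O_{\PP^2_R,P}$ for each closed point $P\in\XX_s$, which is exactly the condition for the hypersurface $\XX=V_+(G)$ to be regular at $P$. If $P$ lies on a single component (this includes $\overline\infty_1,\overline\infty_2$, since $C_1\cap V_+(z)=\{\overline\infty_1\}$ and $C_2\cap V_+(z)=\{\overline\infty_2\}$ are distinct), say $\overline F_2(P)\neq 0$, then modulo $\mathfrak m_P^2$ one has $G\equiv\overline F_2(P)\,F_1$, which is nonzero because smoothness of $C_1$ at $P$ forces $F_1\notin\mathfrak m_P^2$. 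If $P$ is a node, i.e.\ $P\in C_1\cap C_2$, then $F_1,F_2\in\mathfrak m_P$, so $F_1F_2\in\mathfrak m_P^2$ and $G\equiv-\pi z^{2d}\not\in\mathfrak m_P^2$; here it is precisely the deformation term $\pi z^{2d}$ that rescues regularity at the crossings. This gives (a). For (b), $\XX$ is integral and dominates $\Spec R$, hence flat, so $X=\XX_\eta$ is an open subscheme of the regular scheme $\XX$ and is therefore regular; as $K$ is perfect, $X$ is smooth over $K$, and it is a geometrically connected projective plane curve of degree $2d$. For (c), modulo $\pi$ the equation becomes $\overline F_1\overline F_2$, a product of two distinct irreducible forms, hence squarefree, so $\XX_s$ is reduced with components $C_1,C_2$.

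For (d), the identity $f_1f_2=F_1F_2/z^{2d}=\pi$ on $\XX$ is the key input. Since $\XX_s$ is reduced, $\divisor(\pi)=[C_1]+[C_2]$, and because $f_2$ is a unit at the generic point of $C_1$ (neither $\overline F_2$ nor $z$ vanishes there) we get $v_{C_1}(f_1)=1$, $v_{C_2}(f_1)=0$, and symmetrically for $f_2$. On the generic fiber $F_1$ vanishes on $X$ only at $\infty_1$ (there $z=0$ forces vanishing, while $F_1(\infty_2)\neq 0$), so $\divisor_X(F_1)=2d^2[\infty_1]$ by a degree count, and $\divisor_X(z)=d[\infty_1]+d[\infty_2]$ by the multiplicity hypothesis (i); hence $\divisor_X(f_1)=\divisor_X(F_1)-d\,\divisor_X(z)=d^2[\infty_1]-d^2[\infty_2]$. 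Assembling the vertical and horizontal contributions yields the stated formula for $\divisor(f_1)$, and likewise for $f_2$. In particular $[C_1]\equiv-d^2([\infty_1]-[\infty_2])$ modulo principal divisors, so $[C_1],[C_2]$ are redundant generators and $\Pic(\XX)$ is generated by horizontal divisors.

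For (e), I would first use (iii): pick a nonzero $\tau\in\Pic^0(C_1)[d]$ and, by a standard moving argument, represent it by a degree-$0$ divisor $D_0$ whose support avoids $P_0$ and the nodes $C_1\cap C_2$, with $dD_0=\divisor_{C_1}(\overline w)$ for some $\overline w\in k(C_1)^\unit$. Such $\overline w$ is automatically not a $d$-th power, since $\overline w=g^d$ would give $D_0=\divisor_{C_1}(g)$, contradicting $\tau\neq 0$; rescaling by the constant $\overline w(P_0)\in k(P_0)^\unit=k^\unit$ arranges $\overline w(P_0)=1$ without disturbing either property. The task is then to build $h\in K(X)^\unit$ with $\divisor_\XX(h)$ divisible by $d$, with $v_{C_1}(h)=0$, and with $h|_{C_1}=\overline w$. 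Lifting each point of $D_0$ to a horizontal divisor meeting $\XX_s$ transversally there (lemma~\ref{lemma:integral-models}(iii)), chosen disjoint from $C_2$, produces a horizontal $\Gamma$ with $\Gamma\cdot C_1=D_0$ and $\deg(\Gamma|_X)=0$. Granting that $d[\Gamma|_X]=0$ in $\Pic(X)$, the exact sequence of lemma~\ref{lemma:cft}(iii) gives $d[\Gamma]=m[C_1]$ in $\Pic(\XX)$; intersecting with $C_1$ and using $\deg(\Gamma\cdot C_1)=0$ together with $C_1^2=-C_1\cdot C_2<0$ forces $m=0$. Thus $d\Gamma=\divisor(h)$ for some $h$ with $v_{C_1}(h)=0$ and $\divisor_{C_1}(h|_{C_1})=d\,(\Gamma\cdot C_1)=dD_0$, whence $h|_{C_1}=c\,\overline w$ for a constant $c\in k^\unit$; dividing $h$ by any unit of $R^\unit$ lifting $c$ completes (e).

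The main obstacle is the one hypothesis left open above, namely $d[\Gamma|_X]=0$ in $\Pic^0(X)$. The specialization of $[\Gamma|_X]$ to $\XX_s$ is the $d$-torsion class $[D_0]$, so $d[\Gamma|_X]$ already lies in the kernel of the reduction map $\Pic^0(X)=J_X(K)\to\Pic^0(\XX_s)$. This kernel is the group of $R$-points of a formal group, hence pro-$p$ and, since $p\nmid d$, uniquely $d$-divisible; so I expect to modify the chosen lifts of the points of $D_0$ within their residue disks, altering $[\Gamma|_X]$ by a prescribed element of the reduction kernel while preserving $\Gamma\cdot C_1=D_0$, so as to kill $d[\Gamma|_X]$ exactly. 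Making this adjustment rigorous, in particular representing the needed elements of the reduction kernel by divisors supported off $C_1$, is the delicate step of the lemma; the remainder is bookkeeping with divisors on the regular model $\XX$.
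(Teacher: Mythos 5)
Parts (a)--(d) of your argument are sound and essentially parallel to the paper's: the paper verifies regularity by exhibiting two generators of each maximal ideal rather than via the hypersurface criterion, and computes $v_{\infty_1}(f_1)$ by a local length computation in $\sheaf{O}_{\XX,\overline\infty_1}$ rather than by a degree count on the generic fiber, but these are cosmetic differences. One small slip in (a): at a point $P$ lying on a single component with $z(P)\neq 0$, one has $G\equiv uF_1-\pi z^{2d}\pmod{\ideal{m}_P^2}$ with the second term \emph{not} in $\ideal{m}_P^2$, so the correct way to conclude is that $F_1\notin(\pi)+\ideal{m}_P^2$ by smoothness of $C_1$; the conclusion $G\notin\ideal{m}_P^2$ is unaffected.

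The genuine gap is in (e), and you have located it yourself: you never establish that the horizontal lift $\Gamma$ can be chosen with $d[\Gamma|_X]=0$ in $\Pic(X)$, and the repair you sketch is substantially harder than you suggest. First, the assertion that $d[\Gamma|_X]$ lies in the kernel of reduction presupposes a specialization map $\Pic^0(X)\to\Pic^0(\XX_s)$ computed by ``take closures and intersect with components'', whose existence and identification of the kernel with a formal group require the theory of relative Picard functors and N\'eron models for a model with reducible special fiber (and $\Pic^0(X)=J_X(K)$ is not automatic when $X(K)=\emptyset$). Second, your $D_0$ is only $d$-torsion in $\Pic^0(C_1)$: by lemma~\ref{lemma:non-cyclic-division} its class in $\Pic^0(\XX_s)$ may acquire a nontrivial component in the torus $T(k)$ after multiplication by $d$, so $d[\Gamma|_X]$ need not even reduce to zero. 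Third, realizing a prescribed element of the (uniquely $d$-divisible) reduction kernel as a sum of differences of lifts confined to the given residue disks, while preserving $\Gamma\cdot C_1=D_0$, is precisely the ``delicate step'' you leave unproved, and it is not routine for $g>1$. The paper bypasses all of this: by lemma~\ref{lemma:cft}(ii) (Kummer sequence, proper base change, and $\Br(\XX)=\Br(\XX_s)=0$) one has $\Pic(\XX)[d]=\Pic(\XX_s)[d]$, and composing with the split surjection $\Pic^0(\XX_s)[d]\onto\Pic^0(C_1)[d]\oplus\Pic^0(C_2)[d]$ of lemma~\ref{lemma:non-cyclic-division} yields $L\in\Pic(\XX)[d]$ restricting to the chosen nontrivial $\overline L_1\in\Pic(C_1)[d]$; one then writes $L=\sheaf{O}_{\XX}(E)$ with $E$ avoiding $C_1$, $C_2$ and the nodes (using horizontal generation from (d) together with Saltman's semilocal UFD moving argument) and sets $\divisor(h)=dE$. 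Replacing your lifting construction by this appeal to $\Pic(\XX)[d]=\Pic(\XX_s)[d]$ closes the gap; the rest of your part (e), including the non-$d$-th-power argument and the normalization at $P_0$, then goes through as in the paper.
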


\begin{proof}
  To prove (a) we have to show that for all closed points $P\in \XX$
  the maximal ideal $\ideal{m}_P$ of $\sheaf{O}_{\XX, P}$ (a
  $2$-dimensional local ring) can be generated by $2$ elements.  Since
  $P$ necessarily lies on the special fiber $\XX_s$ there are two
  cases to consider:
  \begin{itemize}
  \item $P$ is a regular point of the special fiber $\XX_s$.  In that
    case, if $\overline t$ is a uniformizer of the discrete valuation
    ring $\sheaf{O}_{\XX_s, P} = \sheaf{O}_{\XX, P}/(\pi)$ then
    $\ideal{m}_P = (t, \pi)$ for any lift $t\in \sheaf{O}_{\XX, P}$ of
    $\overline t$;

  \item $P$ is one of the nodal points in $C_1\cap C_2$.  Then
    $\ideal{m}_P = (f_1, f_2, \pi)$, which is clearly equal to
    $(f_1, f_2)$ since $f_1f_2 = \pi$ in $\sheaf{O}_{\XX, P}$.
  \end{itemize}
  Since $\XX$ is regular, so is its open subset $X$, therefore (b)
  holds.  Item (c) is clear.

  To show (d), we work with $f_1$, the case for $f_2$ being similar.
  First notice that, as an element of the ring
  $\sheaf{O}_{\XX}(D_+(z))$, $f_1$ is a prime defining the generic
  point of $C_1$ since (here $\overline f_1$ stands for
  $f_1\bmod \pi$)
  $$\frac{\sheaf{O}_{\XX}(D_+(z))}{(f_1)}
  = \frac{R[\frac{x}{z}, \frac{y}{z}]}{(f_1f_2 - \pi, f_1)}
  = \frac{k[\frac{x}{z}, \frac{y}{z}]}{(\overline f_1)}
  = \sheaf{O}_{C_1}(D_+(z))
  $$
  The complement $V_+(z) = \XX \smallsetminus D_+(z)$ of the open
  subset $D_+(z)$ consists of two $R$-points $\infty_1$ and
  $\infty_2$, hence the support of $\divisor(f_1)$ is contained in
  $\{ C_1, \infty_1, \infty_2 \}$, and all that is left is to compute
  the valuations $v_{\infty_i}(f_1)$ of $f_1$ with respect to the
  points $\infty_i$.  Actually, it suffices to show that
  $v_{\infty_1}(f_1) = d^2$; then by symmetry
  $v_{\infty_2}(f_2) = d^2$ and from $f_1f_2 = \pi$ we will
  automatically get $v_{\infty_2}(f_1) = -d^2$.

  To show $v_{\infty_1}(f_1) = d^2$, it is enough to work over the
  $2$-dimensional local ring $\sheaf{O}_{\XX, \overline\infty_1}$.
  Without loss of generality, say that $\infty_1 \in D_+(y)$, and
  write $x' = x/y$, $z' = z/y$ and $\phi_i = F_i/y^d \in R[x', z']$.
  Let $\xi\in \Spec R[x', z'] \subset \PP^2_R$ be the prime
  corresponding to the generic point of $\infty_1$ (so that
  $\overline\infty_1 = (\pi, \xi)$ and
  $\sheaf{O}_{\XX, \overline\infty_1} = R[x', z']_{(\pi, \xi)}/(\phi_1
  \phi_2 - \pi (z')^{2d})$).  Since
  $\overline\infty_1\ne \overline\infty_2$ by hypothesis,
  $\overline\infty_1 \notin V_+(\overline F_2) \subset \PP^2_k$ and
  thus $\phi_2$ is a unit in $\sheaf{O}_{\XX, \overline\infty_1}$.
  Therefore
  $$\phi_1 \phi_2 = \pi (z')^{2d} \implies
  f_1 = \frac{\phi_1}{(z')^d} = \frac{\pi}{\phi_2}\cdot  (z')^{d}
  \qquad\text{ in }
  \sheaf{O}_{\XX, \overline\infty_1}
  $$
  showing that $v_{\infty_1}(f_1) = d\cdot v_{\infty_1}(z')$.
  Finally, to show that $v_{\infty_1}(z') = d$, we have to compute the
  length of the $\sheaf{O}_{\XX, \xi}$-module
  $$\frac{\sheaf{O}_{\XX, \xi}}{(z')}
  = \frac{R[x', z']_{\xi}}{(\phi_1 \phi_2 - \pi (z')^{2d}, z')} =
  \frac{R[x', z']_{\xi}}{(\phi_1, z')}
  $$
  (since $\phi_2\in \sheaf{O}_{\XX, \overline\infty_1}^\unit$,
  $\phi_2 \notin (\pi, \xi)$ in $R[x', z']$ and thus
  $\phi_2\in R[x', z']_\xi^\unit$ as well).  But the length of the
  last module is exactly the intersection multiplicity of $V_+(z)$ and
  $V_+(F_1)$ in $\PP^2_R$ at $\infty_1$, which is $d$ by hypothesis.
  
  Finally, to show (e) let $\overline L_1 \in \Pic(C_1)[d]$ be a
  nontrivial element, whose existence is guaranteed by
  setup~\ref{setup:ceg}(iii).  Since $\XX_s$ is geometrically
  connected, $H^0(\XX_s, \sheaf{O}_{\XX_s}) = k$ and by
  lemmas~\ref{lemma:cft}(ii) and \ref{lemma:non-cyclic-division}, the
  natural map
  $$\Pic(\XX)[d] = \Pic(\XX_s)[d] \onto \Pic^0(C_1)[d] \oplus \Pic^0(C_2)[d]$$
  is surjective, hence there is $L\in \Pic(\XX)[d]$ restricting to
  $\overline L_1$.  We may write $L = \sheaf{O}_{\XX}(E)$ for some
  divisor $E\in \Div(\XX)$ whose support does not contain $C_1$, $C_2$
  or any divisor going through any of the (at most $d^2$) intersection
  points of $C_1$ and $C_2$.  In fact, $\Pic(\XX)$ is generated by
  horizontal divisors; moreover, as in the proof of the lemma in
  \cite{Saltman-p-adic-c}, we may modify $E$ by a principal divisor
  using the fact that there is an affine open set $\Spec A$ of $\XX$
  containing all these intersection points, and that the
  semi-localization of $A$ with respect to the corresponding maximal
  ideals is a UFD (by Auslander-Buchsbaum's theorem and
  \cite{Matsumura}, exercise 20.5, p.169).

  Since $L = \sheaf{O}_{\XX}(E)$ has order $d$ in $\Pic(\XX)$, there
  exists $h\in K(X)^\unit$ such that $\divisor(h) = dE$, and by the
  choice of $E$ the restriction $\overline h \in k(C_1)^\unit$ is
  well-defined, and $P_0$ is neither a zero nor a pole of
  $\overline h$.  Thus multiplying $h$ by a unit in $R^\unit$, we may
  further assume that $\overline h(P_0) = \overline 1$.  Now all that
  is left is to show that $\overline h$ is not a $d$-th power in
  $k(C_1)^\unit$.  Write $\overline E_1\in \Div(C_1)$ for the
  restriction of $E$ to $C_1$.  Then
  $\overline L_1 = \sheaf{O}_{C_1}(\overline E_1)$ in $\Pic(C_1)$ and
  $\divisor(\overline h) = d \overline E_1$.  Hence if
  $\overline h = \overline g^d$ with $\overline g\in k(C_1)^\unit$ we
  would have $\divisor (\overline g) = \overline E_1$, which
  contradicts the fact that
  $\overline L_1 = \sheaf{O}_{C_1}(\overline E_1)$ is not trivial in
  $\Pic(C_1)$.  This finishes the proof of item (e).
\end{proof}

Now we are ready to show

\begin{claim}
  Assume setup~\ref{setup:ceg}, and let $h$ be as in the previous
  lemma.  Choose a primitive root of unity $\zeta_d \in \mu_d$, and
  define the period $d$ sum of symbol algebras
  $$\beta \df= (f_1, h)_{\zeta_d} + (f_2, u)_{\zeta_d} \in \Br(K(X))[d]$$
  Then $\beta \in \Br(X)[d]$.  If $m$ and $n$ are respectively the
  orders of $\overline h$ in $k(C_1)^\unit/d$ and $u$ in $k^\unit/d$
  then $mn \mid \ind(\beta)$.  In particular, when $d$ is prime
  $\ind(\beta) > d$, hence $\beta$ is not $\ZZ/d$-cyclic.
\end{claim}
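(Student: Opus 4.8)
The plan is to verify membership $\beta\in\Br(X)[d]$ by a residue computation at every closed point of $X$, and then to bound $\ind(\beta)$ from below by applying Nakayama's formula (lemma~\ref{lemma:NAK}) twice: first at the vertical divisor $C_1$, and then at the split point $P_0$ inside the resulting residue field.  Throughout I use that, for a symbol $(a,b)_{\zeta_d}$ and a discrete valuation $v$ with residue field $F\supseteq\mu_d$, the tame symbol computes the residue as
$$\partial_v\bigl((a,b)_{\zeta_d}\bigr) = \overline{(-1)^{v(a)v(b)}\,a^{v(b)}\,b^{-v(a)}} \in F^\unit/d = H^1(F,\ZZ/d).$$

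For membership, by sequence~\eqref{eq:9} it suffices to show $\partial_x(\beta)=0$ for all $x\in X_0$.  By the preceding lemma, part~(d), on the generic fiber $X$ the divisors of $f_1$ and $f_2$ are supported at $\infty_1,\infty_2$ with all multiplicities equal to $\pm d^2$; by part~(e), $\divisor(h)=dE$ has all multiplicities divisible by $d$; and $u\in R^\unit$ is a unit at every $x$.  Hence at any $x$ the exponents $v_x(f_1),v_x(f_2),v_x(h)$ entering the tame symbols of $(f_1,h)_{\zeta_d}$ and $(f_2,u)_{\zeta_d}$ are divisible by $d$, so every factor is a $d$-th power in $\kappa(x)^\unit$ and $\partial_x(\beta)=0$.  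Thus $\beta\in\Br(X)[d]$.

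Next I pass to the completion $\widehat{K(X)}_{C_1}$ at the generic point of $C_1\subset\XX$, whose residue field is the global field $k(C_1)$.  Since $f_1f_2=\pi$ and $v_{C_1}(f_1)=1$ (part~(d)), $f_1$ is a uniformizer there, while $f_2,h,u$ are units.  Therefore $(f_1,h)_{\zeta_d}$ is purely ramified, contributing $\delta f_1\cup\chi$ with $\chi=\chi_{\overline h}\in H^1(k(C_1),\ZZ/d)$ the character of $\overline h$ (of order $m$), whereas $(f_2,u)_{\zeta_d}$ is unramified and inflates from $\alpha\df=(\overline f_2,\overline u)_{\zeta_d}\in\Br(k(C_1))[d]$.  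This is precisely the Witt decomposition $\beta=\alpha+\delta f_1\cup\chi$, so lemma~\ref{lemma:NAK} yields
$$\ind\bigl(\beta|_{\widehat{K(X)}_{C_1}}\bigr) = m\cdot\ind\bigl(\alpha|_{k(C_1)(\chi)}\bigr),\qquad k(C_1)(\chi)=k(C_1)(\sqrt[m]{\overline h}).$$
Because index does not increase under scalar extension, $\ind(\beta|_{\widehat{K(X)}_{C_1}})$ divides $\ind(\beta)$, so it remains to show $n\mid\ind(\alpha|_{F'})$, where $F'\df=k(C_1)(\sqrt[m]{\overline h})$.

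For this I exploit the common point $P_0\in C_1\cap C_2$ together with part~(e).  Since $\overline h(P_0)=\overline1$ is an $m$-th power in $k(P_0)=k$ (note $\mu_d\subset k$ as $p\nmid d$), the extension $F'/k(C_1)$ is unramified and totally split at $P_0$; choose a place $P_0'$ of $F'$ above it, with residue field $k$.  As $C_1$ and $C_2$ meet transversally at $P_0$, the restriction $\overline f_2\in k(C_1)^\unit$ has a simple zero there, so $v_{P_0'}(\overline f_2)=1$, while $\overline u\in k^\unit$ has order $n$ in $k^\unit/d$.  Computing the tame symbol of $\alpha|_{F'}=(\overline f_2,\overline u)_{\zeta_d}$ at $P_0'$ gives $\partial_{P_0'}(\alpha|_{F'})=\overline u^{-1}$, a character of order exactly $n$ (the residue field $k(P_0')=k$ preserves the order of $\overline u$), and a final application of lemma~\ref{lemma:NAK} at $P_0'$ forces $n\mid\ind(\alpha|_{\widehat{F'}_{P_0'}})\mid\ind(\alpha|_{F'})$.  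Combining, $mn\mid m\cdot\ind(\alpha|_{F'})=\ind(\beta|_{\widehat{K(X)}_{C_1}})\mid\ind(\beta)$.  When $d$ is prime, part~(e) gives $m=d$ (as $\overline h$ is not a $d$-th power) and setup~\ref{setup:ceg} gives $n=d$ (as $u$, hence $\overline u$, is not a $d$-th power), so $d^2\mid\ind(\beta)$; since any $\ZZ/d$-cyclic class has index dividing $d$, $\beta$ is not $\ZZ/d$-cyclic.  The main obstacle is this last step: one must ensure the degree-$m$ extension $F'$ does not also annihilate the second residue, and this is guaranteed exactly by the splitting of $P_0$ in $F'$ — the reason part~(e) was arranged so that $\overline h(P_0)=\overline1$ and $P_0$ is a transversal (hence simple) zero of $\overline f_2$.
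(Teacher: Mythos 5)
Your argument is correct and follows the paper's proof almost verbatim: the same residue computation at the closed points of $X$ for membership, the same Witt decomposition of $\beta$ at the completion along $C_1$, Nakayama's formula, and the complete splitting of $P_0$ in the extension cut out by $\overline h$ combined with the transversality of $C_1$ and $C_2$ to produce a residue of order $n$. The only (harmless) deviations are that you conclude via the local index at $P_0'$ dividing the global index rather than via the class-field-theoretic fact that the index over the global field $k(C_1)(\chi)$ is the $\mathrm{lcm}$ of its local invariants, which is what the paper invokes, and that the degree-$m$ field defined by $\chi$ should be written $k(C_1)(\overline h^{1/d})$ rather than $k(C_1)(\sqrt[m]{\overline h})$ (these can differ for composite $d$, though your splitting argument at $P_0$ works for either).
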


\begin{proof}
  Let $P\in X_0$ be an arbitrary closed point of the generic fiber of
  $\XX$.  Then $P$ is the generic point of a unique horizontal divisor
  on $\XX$ (lemma~\ref{lemma:integral-models}(ii)); write
  $v_P \colon K(X) \to \ZZ \cup \{\infty\}$ and
  $\partial_P \colon \Br(K(X))[d] \to H^1(\kappa(P), \QQ/\ZZ)[d]$ for
  the corresponding discrete valuation and ramification maps.  By
  sequence \eqref{eq:9}, to prove that $\beta \in \Br(X)[d]$, we have
  to show that $\partial_P(\beta) = 0$, but this is clear since all
  $v_P(f_1)$, $v_P(f_2)$, $v_P(h)$, $v_P(u) = 0$ are multiples of $d$
  by itens (d) and (e) of the previous lemma.

  Next let $\widehat{K(X)}$ be the completion of $K(\XX) = K(X)$ with
  respect to the valuation defined by the prime divisor $C_1$.
  Observe that $f_1$ is a uniformizer of $\widehat{K(X)}$, and that
  its residue field is $k(C_1)$, the function field of $C_1$.  To show
  that $mn \mid \ind(\beta)$ it is enough to show that the index of
  the restriction $\beta|_{\widehat{K(X)}}$ is a multiple of $mn$.  We
  just have to apply Nakayma's index formula (lemma~\ref{lemma:NAK}).
  Denoting by a bar images in $k(C_1)$, since the order of the
  $f_1$-unramified Kummer character given by
  $\delta_d(\overline h) \in H^1(k(C_1), \mu_d) \subset
  H^1(\widehat{K(X)}, \mu_d)$ is $m$,
  $$\ind \beta|_{\widehat{K(X)}}
  = m \cdot \ind (\overline f_2, \overline u)_{\zeta_d}|_{k(C_1)(\overline h^{1/d})}
  $$
    Since
  $k(C_1)(\overline h^{1/d})$ is a global field of characteristic $p$,
  by Class Field Theory the index of
  $(\overline f_2, \overline u)_{\zeta_d}|_{k(C_1)(\overline
    h^{1/d})}$ is the least common multiple of its local invariants,
  hence it suffices to show that one of these invariants equals $n$.
  Since $\overline h(P_0) = \overline 1$ by construction, $P_0$ splits
  completely in the Kummer extension $k(C_1)(\overline h^{1/d})$,
  hence the local invariant of
  $(\overline f_2, \overline u)_{\zeta_d}|_{k(C_1)(\overline
    h^{1/d})}$ with respect to any point lying over $P_0$ is the same
  as $\inv_{P_0}(\overline f_2, \overline u)_{\zeta_d}$.  But the
  latter is equal to the order $n$ of $\overline u$ in $k^\unit/d$
  since $\overline f_2$ is a uniformizer of $P_0\in C_1$ (recall that
  $C_1$ and $C_2$ have normal crossings by assumption), and we are
  done.
\end{proof}


\section{Tate curves}\label{sec:tate-curves}

In this section, we use Lichtenbaum's duality in order to study the
Brauer group of elliptic curves with split multiplicative reduction,
i.e., Tate curves.  The nice fact about Tate curves is that they enjoy
a $p$-adic uniformization, which will allow us to obtain cyclicity
results even for the $p$-primary case.

Let $q\in K^\unit$ with $|q| < 1$. Recall that the \emph{Tate elliptic
  curve} $E_q$ is the elliptic curve defined by the affine equation
$$y^2 + xy = x^3 + a_4(q) x + a_6(q)$$
where
$$s_r(q) = \sum_{n\ge 1} \frac{n^r q^n}{1-q^n}\qquad
a_4(q) = - 5s_3(q)\qquad
a_6(q) = - \frac{5s_3(q) +7s_5(q)}{12}
$$
For the next result, see \cite{SilvermanATAEC}, theorems~V.3.1 on
p.~423, V.5.3 on p.442.

\begin{theorem}[Tate]\label{thm:tateellcurve}
  Let $q\in K^\unit$ with $|q| < 1$, and $E_q$ be the Tate elliptic
  curve as defined above.
  \begin{enumerate}[(i)]
  \item There is an exact sequence of $G_K$-modules
    \begin{center}
      \begin{tikzcd}[column sep=large]
        0\arrow{r}& \ZZ \arrow{r}{n\mapsto q^n}& \overline K^\unit \arrow{r}{\varphi}& E_q(\overline K) \arrow{r}& 0
      \end{tikzcd}
    \end{center}
    with the usual trivial $G_K$-action on $\ZZ$, and Galois actions
    on $\overline K^\unit$ and $E_q(\overline K)$.  In particular, for
    every algebraic extension $L \supseteq K$, $\varphi$ induces an
    isomorphism $\varphi\colon E_q(L) \isoto L^\unit/q^\ZZ$ (since
    $H^1(G_L, \ZZ) = \Hom_c(G_L, \ZZ) = 0$).

  \item Let $E$ be an elliptic curve over $K$ with split
    multiplicative reduction.  Then $E$ is $K$-isomorphic to $E_q$ for
    some $q\in K^\unit$, $|q|<1$.
  \end{enumerate}    
\end{theorem}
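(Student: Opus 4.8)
The plan is to construct the uniformization $\varphi$ explicitly by convergent $q$-series, mirroring the classical complex-analytic isomorphism $\CC^\unit/q^\ZZ \isoto E_q(\CC)$, and then to transport the needed algebraic identities from $\CC$ to $K$ by permanence of formal identities. For part~(i), I would fix $q\in K^\unit$ with $|q|<1$ and define, for $w\in \overline K^\unit$,
\[
X(w, q) = \sum_{n\in\ZZ} \frac{q^n w}{(1 - q^n w)^2} - 2 s_1(q), \qquad
Y(w, q) = \sum_{n\in\ZZ} \frac{(q^n w)^2}{(1 - q^n w)^3} + s_1(q),
\]
checking that these series (together with $a_4(q), a_6(q)$) converge in the complete non-archimedean field because $|q|<1$ forces the general term to tend to $0$, and that the discriminant $\Delta(q) = q\prod_{n\ge 1}(1-q^n)^{24}$ is a unit times $q$, hence nonzero. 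This makes $E_q$ a genuine elliptic curve and shows $\varphi(w) \df= (X(w,q), Y(w,q))$, with $\varphi(q^\ZZ) = O$, lands on $E_q$.

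The heart of part~(i) is to verify that $\varphi$ is a $G_K$-equivariant homomorphism with kernel exactly $q^\ZZ$ that is surjective on $\overline K$-points. Galois equivariance is immediate since all coefficients $a_i(q), s_r(q)$ lie in $K$ and the defining series commute with the $G_K$-action on $w$. The homomorphism property $\varphi(w w') = \varphi(w)\cdot\varphi(w')$ I would not prove by direct manipulation of the Weierstrass addition law; instead I would observe that the required relation is, after clearing denominators, an identity of formal power series with integer coefficients in the variables $w, w'$, and that such an identity, once known over $\CC$ (where it is the classical statement that $z\mapsto (X(e^{2\pi i z},q), Y(e^{2\pi i z},q))$ parametrizes $\CC/(\ZZ\tau + \ZZ)$), holds over every ring. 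For the kernel and surjectivity I would use the formal group of $E_q$ at the origin, which the same permanence-of-identities comparison identifies with the formal multiplicative group, giving a local isomorphism near $1\in\overline K^\unit$; surjectivity onto all of $E_q(\overline K)$ then follows by combining this local statement with the translation structure and a Hensel/successive-approximation argument. Having established the short exact sequence of $G_K$-modules, the isomorphism $\varphi\colon E_q(L)\isoto L^\unit/q^\ZZ$ for an algebraic extension $L\supseteq K$ drops out of the long exact cohomology sequence together with $H^1(G_L, \ZZ) = \Hom_c(G_L, \ZZ) = 0$, as indicated in the statement.

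For part~(ii), let $E/K$ have split multiplicative reduction, so $v(j(E)) < 0$, i.e. $|j(E)| > 1$. I would use that $j(E_q)$ has the expansion $j(E_q) = q^{-1} + 744 + 196884\,q + \cdots$, a Laurent series in $q$ with integer coefficients and leading term $q^{-1}$; inverting it by a Newton/Hensel argument produces a unique $q\in K^\unit$ with $|q|<1$ and $j(E_q) = j(E)$. Two elliptic curves over $K$ with equal $j$-invariant become isomorphic over $\overline K$ and differ by a quadratic twist (here $|j|>1$ ensures $j\ne 0, 1728$, so only quadratic twists occur). I would pin down that this twist is trivial precisely because both curves have \emph{split} multiplicative reduction: among the quadratic twists of $E_q$, a ramified twist destroys multiplicative reduction, while the unramified quadratic twist interchanges split and non-split reduction, so the only twist preserving split multiplicative reduction is the trivial one, yielding a $K$-isomorphism $E\cong E_q$.

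The main obstacle is the homomorphism-plus-surjectivity package in part~(i): the cleanest route is the permanence-of-identities comparison with the complex case rather than any $p$-adic computation, and one must take care that surjectivity onto all of $E_q(\overline K)$ — not merely onto the image of a neighbourhood of the identity — is delivered by the formal-group and Hensel step. In part~(ii) the delicate point is excluding the nontrivial (unramified) quadratic twist, for which the splitness hypothesis is exactly what is needed.
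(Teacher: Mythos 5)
The paper offers no proof of this theorem at all — it is quoted directly from Silverman's \emph{Advanced Topics} (Theorems V.3.1 and V.5.3), which is precisely the argument your sketch reconstructs: the explicit $q$-series for $X(w,q)$ and $Y(w,q)$, the permanence-of-formal-identities comparison with the complex uniformization for the homomorphism property, and the inversion of $j(E_q)=q^{-1}+744+\cdots$ plus the quadratic-twist analysis for part (ii). Your outline is the standard proof from the cited source and correctly identifies its delicate points (surjectivity of $\varphi$ beyond the formal neighbourhood of the identity, and ruling out the unramified quadratic twist via the splitness hypothesis), so there is nothing in the paper itself to compare it against.
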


The main technical result in this section is given by the following

\begin{lemma}\label{lemma:brgptatecurve}
  Let $q\in K^\unit$ with $|q| < 1$, and $E_q$ be the corresponding
  Tate elliptic curve.  For any finite field extension $L\supseteq K$,
  let $\theta_L \colon L^\unit \into G_L^{\rm ab}$ be the local
  reciprocity map (see \cite{Neukirch}, chapter V, p.317 and
  \cite{SerreLF}, chapter XIII, p.188) and write $H^1_q(G_L, \QQ/\ZZ)$
  for the kernel of
  \begin{align*}
    \epsilon_q\colon H^1(G_L, \QQ/\ZZ) =  \Hom_c(G_L^{\rm ab}, \QQ/\ZZ)&\to  \QQ/\ZZ\\
    \chi&\mapsto \chi(\theta_L(q))
  \end{align*}
  Then there is an isomorphism
  $$\xi_q\colon \Br(E_q \tensor L)\isoto H^1_q(G_L, \QQ/\ZZ) \oplus \QQ/\ZZ$$
  sitting in a commutative diagram
  \begin{center}
    \begin{tikzcd}
      \Br(E_q\tensor L) \arrow{r}{\xi_q}[swap]{\approx}& H^1_q(G_L, \QQ/\ZZ) \oplus \QQ/\ZZ\\
      \Br(E_q) \arrow{u}{\res}\arrow{r}{\xi_q}[swap]{\approx}& H^1_q(G_K, \QQ/\ZZ) \oplus \QQ/\ZZ \arrow{u}[swap]{\res\oplus [L:K]}
    \end{tikzcd}
  \end{center}
\end{lemma}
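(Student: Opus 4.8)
The plan is to compute $\Br(E_q \tensor L)$ by combining Lichtenbaum's duality (Theorem~\ref{thm:lichtembaums-duality}) with Tate's $p$-adic uniformization (Theorem~\ref{thm:tateellcurve}) to pin down $\Pic(E_q\tensor L)$, and then to dualize. First I would observe that since $E_q$ is an elliptic curve it has a rational point (the origin $O$), so $X(L)\ne\emptyset$ and hence $\Pic^0(E_q\tensor L) = J_{E_q}(L) = E_q(L)$ by the discussion following Theorem~\ref{thm:lichtembaums-duality}. The degree map $\deg\colon \Pic(E_q\tensor L)\to\ZZ$ is then split by $n\mapsto n[O]$, giving a (topological) direct sum decomposition
\begin{equation*}
  \Pic(E_q\tensor L) = E_q(L) \oplus \ZZ
\end{equation*}
as topological groups, where $\ZZ$ carries the discrete topology and $E_q(L)$ its $p$-adic topology. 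Dualizing via Lichtenbaum, $\Br(E_q\tensor L) = \Pic(E_q\tensor L)^\vee = E_q(L)^\vee \oplus \ZZ^\vee = E_q(L)^\vee \oplus \QQ/\ZZ$, which already produces the second summand $\QQ/\ZZ$ of the target.

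Next I would identify $E_q(L)^\vee$ with $H^1_q(G_L,\QQ/\ZZ)$. Here is where Tate's uniformization enters: by Theorem~\ref{thm:tateellcurve}(i), $\varphi$ gives an isomorphism $E_q(L)\isoto L^\unit/q^\ZZ$, so dually $E_q(L)^\vee = (L^\unit/q^\ZZ)^\vee$, the group of continuous characters of $L^\unit$ that kill $q$. The plan is to transport these across the local reciprocity map $\theta_L\colon L^\unit\into G_L^{\rm ab}$. Local class field theory tells us that $\theta_L$ has dense image and identifies continuous characters of $G_L^{\rm ab}$ (i.e.\ elements of $H^1(G_L,\QQ/\ZZ) = \Hom_c(G_L^{\rm ab},\QQ/\ZZ)$) with continuous characters of $L^\unit$; under this correspondence, a character $\chi$ kills $q\in L^\unit$ precisely when $\chi(\theta_L(q)) = 0$, i.e.\ $\chi\in\ker\epsilon_q = H^1_q(G_L,\QQ/\ZZ)$. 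Thus $E_q(L)^\vee = (L^\unit/q^\ZZ)^\vee = H^1_q(G_L,\QQ/\ZZ)$, and assembling the two pieces defines the isomorphism $\xi_q$.

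Finally I would verify commutativity of the diagram, tracking the two summands separately under the functoriality of all the maps involved. On the $\QQ/\ZZ$ factor, the restriction $\Br(E_q)\to\Br(E_q\tensor L)$ is dual to the transfer/norm on Picard groups, and on the degree component the base change $\Pic(E_q)\to\Pic(E_q\tensor L)$ multiplies degrees by $[L:K]$; dualizing yields the asserted $[L:K]$ on $\QQ/\ZZ$. On the first factor, compatibility of $\theta$ with norms (the fundamental functoriality $\theta_L\circ N_{L/K} = \theta_K|$, equivalently $\res\circ\theta_K^\vee = \theta_L^\vee\circ N_{L/K}^\vee$ in class field theory) shows that the restriction $H^1_q(G_K,\QQ/\ZZ)\to H^1_q(G_L,\QQ/\ZZ)$ matches the dual of $\varphi$ under $\xi_q$. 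I expect the main obstacle to be exactly this last bookkeeping step: making the identifications canonical enough that the restriction map $\res$ on Brauer groups really corresponds to $\res\oplus[L:K]$ on the right, which requires being careful about how Tate's $\varphi$, Lichtenbaum duality, and the reciprocity map $\theta$ each behave under base change $K\rightsquigarrow L$, and checking that the $\ZZ$-summands (chosen via the section $n\mapsto n[O]$ of $\deg$) are compatible with base change so that no cross-terms appear in the matrix of $\res$.
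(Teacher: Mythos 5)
Your proposal follows essentially the same route as the paper's proof: split $\Pic(E_q\tensor L)$ as $E_q(L)\oplus\ZZ$ using the origin, apply Lichtenbaum duality, identify $E_q(L)^\vee$ with $H^1_q(G_L,\QQ/\ZZ)$ via Tate's uniformization and the duality induced by $\theta_L$, and check the diagram by computing the pushforward on Picard groups together with the norm-compatibility of the reciprocity map. The only point the paper elaborates that you cite as a black box is the surjectivity of $\theta_L^\vee\colon \Hom_c(G_L^{\rm ab},\QQ/\ZZ)\to\Hom_c(L^\unit,\QQ/\ZZ)$, which it derives from the existence theorem; this is standard and your appeal to local class field theory is acceptable.
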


\begin{proof}
  First, observe that the continuous map
  $\theta_L \colon L^\unit \into G_L^{\rm ab}$ induces an isomorphism
  $$H^1(G_L, \QQ/\ZZ) = \Hom_c(G_L^{\rm ab}, \QQ/\ZZ) \stackrel{\theta_L^\vee}{\to} \Hom_c(L^\unit, \QQ/\ZZ)
  $$
  In fact, $\theta_L^\vee$ is injective since $\theta_L$ has dense
  image.  To show that $\theta_L^\vee$ is surjective, let
  $f\in \Hom_c(L^\unit, \QQ/\ZZ)$, and let $\pi_L$ be a uniformizer of
  the ring of integers $R_L$ of $L$, so that
  $L^\unit \cong \pi_L^\ZZ \oplus U_L$ where $U_L$ denotes the group
  of units of $R_L$.  Since $\QQ/\ZZ$ has discrete topology, $\ker f$
  must be open in $L^\unit$, i.e.,
  $\ker f \supseteq U^{(i)}_L \df= 1 + (\pi_L)^i$ for some $i\ge 1$.
  Since $L^\unit \cong \pi_L^\ZZ \oplus U_L$, and $f(\pi_L)$ and
  $U_L/U^{(i)}_L$ have finite orders, $\im f$ is finite, and thus
  $\ker f$ has finite index in $L^\unit$, therefore
  $\ker f = N_{M/L} (M^\unit)$ is a norm group for some finite abelian
  extension $M\supseteq L$ by the existence theorem (\cite{SerreLF},
  XIV.\S6, theorem~1, p.218).  Therefore $f$ will be the image under
  $\theta_L^\vee$ of the composition
  \begin{center}
    \begin{tikzcd}
      G_L^{\rm ab} \arrow[two heads]{r}
      & \Gal(M/L) \arrow{r}{\theta_{M/L}^{-1}}[swap]{\approx}
      & \dfrac{L^\unit}{N_{M/L} (M^\unit)} = \dfrac{L^\unit}{\ker f} \arrow{r}{f}
      & \QQ/\ZZ
    \end{tikzcd}
  \end{center}
  where $\theta_{M/L}$ denotes the isomorphism induced on finite
  quotients by $\theta_L$.

  Next, by theorem~\ref{thm:tateellcurve}(i), there is an exact
  sequence of topological groups
  $$0 \to \ZZ \to L^\unit \to E_q(L) \to 0$$
  which, together with the isomorphism
  $\theta_L^\vee \colon H^1(G_L, \QQ/\ZZ) \isoto \Hom_c(L^\unit,
  \QQ/\ZZ)$, gives rise to an exact sequence
  $$0 \to \Hom_c(E_q(L), \QQ/\ZZ) \to H^1(G_L, \QQ/\ZZ) \stackrel{\epsilon_q}{\to} \Hom_c(\ZZ,\QQ/\ZZ)=\QQ/\ZZ
  $$
  Hence $E_q(L)^\vee = \ker \epsilon_q = H^1_q(G_L, \QQ/\ZZ)$.
  Therefore, by Lichtembaum's duality
  (theorem~\ref{thm:lichtembaums-duality}), we get isomorphisms
  $$\Br(E_q\tensor L) \stackrel{\textrm{\ref{thm:lichtembaums-duality}}}{=} \Pic(E_q\tensor L)^\vee
  = (E_q(L)\oplus\ZZ)^\vee = H^1_q(G_L, \QQ/\ZZ) \oplus \QQ/\ZZ
  $$
  Here we have used the identifications
  \begin{align*}
    E_q(L)&\isoto \Pic^0(E_q \tensor_K L)& \Pic(E_q\tensor_K L)&\isoto  \Pic^0(E_q\tensor_K L) \oplus \ZZ\\
          P&\mapsto [P]-[O_L]&                            \alpha&\mapsto (\alpha - (\deg \alpha)\cdot [O_L], \deg \alpha)
  \end{align*}
  where $[P]\in \Pic(E_q\tensor_K L)$ denotes the class of the
  degree~$1$ divisor given by $P\in E_q(L)$, and $O_L\in E_q(L)$ is
  the identity element of the elliptic curve.

  Finally, to prove that the diagram in the statement of the lemma
  commutes, write $f \colon E_q\tensor_K L \to E_q$ for the natural
  map, let $n = [L:K]$, and
  $\sigma_1, \ldots, \sigma_n\colon L \into \overline L$ be the
  $K$-imersions of $L$ into its separable closure
  $\overline L = \overline K$.  Then
  $f_*\colon \Pic(E_q\tensor_K L) \to \Pic(E_q)$ is given on closed
  points $P\in E_q(L)$ by
  $$f_* [P] = \sum_{1\le i\le n} [\sigma_i(P)]
  \in \Pic(E_q\tensor_K \overline K)^{G_K} = \Pic(E_q)
  $$
  where $\Pic(E_q\tensor_K \overline K)^{G_K} = \Pic(E_q)$ follows
  from the Hochschild-Serre spectral sequence, and the fact that $E_q$
  has a $K$-rational point, so that $\Br(K) \to \Br(E_q)$ is injective
  (see for example \cite{Lichtenbaum}, \S2, p.122).  Hence
  $f_*[O_L] = n[O_K]$, and there is a commutative diagram
  \begin{center}
    \begin{tikzcd}[column sep=small]
      \Pic(E_q\tensor_K L)\arrow{r}{\approx} \arrow{d}[swap]{f_*}
      &\Pic^0(E_q\tensor_K L) \oplus \ZZ \arrow[equal]{r} \arrow{d}[swap]{f_*}
      &E_q(L) \oplus \ZZ \arrow{r}{\approx} \arrow{d}{f_*}
      & L^\unit/q^\ZZ \oplus \ZZ  \arrow{d}{N_{L/K}\oplus n}\\
      \Pic(E_q) \arrow{r}{\approx}
      & \Pic^0(E_q) \oplus \ZZ\arrow[equal]{r}
      & E_q(K) \oplus \ZZ \arrow{r}{\approx}
      & K^\unit/q^\ZZ \oplus \ZZ
    \end{tikzcd}
  \end{center}
  where the rightmost vertical arrow is induced by the norm on the
  first component, and by multiplication by $n = [L:K]$ on the second.
  Combining the above diagram with the next one, expressing one of the
  functorial properties of the local reciprocity map,
  \begin{center}
    \begin{tikzcd}
      L^\unit \arrow{d}[swap]{N_{L/K}} \arrow{r}{\theta_L}& G_L^{\rm ab}\arrow[hook]{d}\\
      K^\unit \arrow{r}{\theta_K}& G_K^{\rm ab}
    \end{tikzcd}
  \end{center}
  taking Pontryagin duals, and applying Lichtenbaum's duality finishes
  the proof.
\end{proof}

The following lemma is well-known, but we include its short proof for
the convenience of the reader.

\begin{lemma}
  Let $m\in \NN$ be such that $\gcd(m, |\mu(K)|) = 1$ where $\mu(K)$
  denotes the (finite) group of all roots of unity in $K$.  Then the
  character group $H^1(G_K, \QQ/\ZZ)$ is $m$-divisible.
\end{lemma}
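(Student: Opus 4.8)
The plan is to reduce the statement, via local class field theory, to an elementary divisibility property of the unit group of $K$. First I would invoke the isomorphism
\[
\theta_K^\vee\colon H^1(G_K, \QQ/\ZZ) \isoto \Hom_c(K^\unit, \QQ/\ZZ) = (K^\unit)^\vee
\]
constructed in the proof of Lemma~\ref{lemma:brgptatecurve} (taking $L = K$ there). Since multiplication by $m$ is compatible with this isomorphism, it suffices to show that the character group $(K^\unit)^\vee$ is $m$-divisible.

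Next I would separate the contribution of a uniformizer. Choosing $\pi$ gives a topological splitting $K^\unit \cong \pi^\ZZ \times R^\unit$, and hence $(K^\unit)^\vee \cong \QQ/\ZZ \oplus (R^\unit)^\vee$. The first summand $\QQ/\ZZ$ is divisible, so the whole group is $m$-divisible as soon as the profinite part $(R^\unit)^\vee$ is. This reduces the problem to the compact group $R^\unit$, where Pontryagin duality is well-behaved.

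The key observation is then that $R^\unit$ has no nontrivial $m$-torsion. Indeed, any $x \in R^\unit$ with $x^m = 1$ is an $m$-th root of unity, hence lies in $\mu(K)$ and satisfies $x^m=1$, i.e.\ in $\mu(K)[m]$; but $\gcd(m, |\mu(K)|) = 1$ forces $x = 1$, so $R^\unit[m] = 0$. Since Pontryagin duality is an exact contravariant equivalence between profinite abelian groups and discrete torsion abelian groups, the cokernel of multiplication by $m$ on $(R^\unit)^\vee$ is canonically $(R^\unit[m])^\vee = 0$; that is, $m$ is surjective on $(R^\unit)^\vee$, which is therefore $m$-divisible. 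Combined with the previous step, this shows $(K^\unit)^\vee$, and hence $H^1(G_K, \QQ/\ZZ)$, is $m$-divisible.

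I expect the only delicate point to be the duality step, specifically making sure the identity ``cokernel of $m$ equals the dual of the $m$-torsion'' is applied to a \emph{compact} group; splitting off $\pi^\ZZ$ in the second step is precisely what isolates the profinite $R^\unit$ and removes this worry. If one prefers to avoid invoking exactness of Pontryagin duality, an equivalent route is to use the structure theorem $R^\unit \cong \mu(K) \times \ZZ_p^{[K:\QQ_p]}$ (see \cite{Neukirch}); then $(R^\unit)^\vee \cong \mu(K)^\vee \oplus (\QQ_p/\ZZ_p)^{[K:\QQ_p]}$, whose second summand is divisible and whose finite first summand has order $|\mu(K)|$ prime to $m$, so multiplication by $m$ is an automorphism of it.
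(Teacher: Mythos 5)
Your proof is correct, but it takes a genuinely different route from the paper's. The paper takes $G_K$-cohomology of the sequence $0 \to (\tfrac{1}{m}\ZZ)/\ZZ \to \QQ/\ZZ \stackrel{m}{\to} \QQ/\ZZ \to 0$ and kills the obstruction term $H^2(G_K, \ZZ/m)$ in one stroke by Tate local duality, $H^2(G_K, \ZZ/m) \cong H^0(G_K,\mu_m)^\vee = \mu_m(K)^\vee = 0$ — a two-line argument. You instead pass through the reciprocity isomorphism $H^1(G_K,\QQ/\ZZ) \cong (K^\unit)^\vee$ (already established in the proof of the preceding lemma, so this is fair to cite), split off the divisible summand coming from $\pi^\ZZ$, and reduce to the absence of $m$-torsion in the compact group $R^\unit$, transferred to $m$-divisibility of the dual either by exactness of Pontryagin duality or by the structure theorem $R^\unit \cong \mu(K)\times \ZZ_p^{[K:\QQ_p]}$. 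All steps check out: the coprimality hypothesis is used exactly where it must be ($R^\unit[m] \subseteq \mu(K)[m] = 0$), and your care about applying the duality identity only to the compact factor is well placed. Both arguments ultimately rest on the same arithmetic input $\mu_m(K)=1$; the paper's is shorter and stays entirely in Galois cohomology, while yours is more explicit and — in its second variant via the structure of local unit groups — essentially elementary, at the cost of importing the reciprocity isomorphism.
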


\begin{proof}
  Taking $G_K$-invariants of
  $0 \to (\tfrac{1}{m}\ZZ)/\ZZ \to \QQ/\ZZ \stackrel{m}{\to} \QQ/\ZZ
  \to 0$, we obtain an exact sequence
  $$H^1(G_K, \QQ/\ZZ) \stackrel{m}{\to} H^1(G_K, \QQ/\ZZ) \to H^2(G_K, (\tfrac{1}{m}\ZZ)/\ZZ)=0$$
  whose last term vanishes by local duality (\cite{NeukirchCNF},
  theorem~7.2.6, p.327):
  $$H^2(G_K, (\tfrac{1}{m}\ZZ)/\ZZ) = H^0(G_K, \mu_m)^\vee = \mu_m(K)^\vee =0
  $$
\end{proof}

Now we can show the main result in this section.

\begin{theorem}\label{thm:tatecurves}
  Let $m\in \NN$, and $E_q$ be a Tate elliptic curve over $K$.
  Suppose that ($p\mid m$ is allowed)
  \begin{enumerate}[(i)]
  \item either $m$ is prime; or
  \item $\gcd(m, |\mu(K)|) = 1$.
  \end{enumerate}
  Then all elements of $\Br(E_q)[m]$ are $\ZZ/m$-cyclic.
\end{theorem}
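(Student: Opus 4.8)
The plan is to leverage the explicit computation of $\Br(E_q)$ in lemma~\ref{lemma:brgptatecurve} to reduce $\ZZ/m$-cyclicity of an arbitrary class to the construction of a single cyclic splitting field of the right degree. First I would fix $\beta \in \Br(E_q)[m]$ and use the isomorphism $\xi_q$ to write $\beta \leftrightarrow (\chi, c)$ with $\chi \in H^1_q(G_K, \QQ/\ZZ)[m]$ and $c \in (\QQ/\ZZ)[m] = \tfrac1m\ZZ/\ZZ$, the second summand being the image under pullback of the constant classes $\Br(K)[m]$. The key observation is the following reduction: suppose I can find a \emph{cyclic} extension $L/K$ with $[L:K] = m' \mid m$ such that $\res_L \beta = 0$ in $\Br(E_q \tensor L)$. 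Then, via the injection $\Br(E_q\tensor L) \into \Br(K(E_q)L)$ (lemma~\ref{lemma:Brauer}(i)), the class $\beta$ lies in the relative Brauer group $\ker\bigl(\Br(K(E_q)) \to \Br(K(E_q)L)\bigr)$ of the cyclic field extension $K(E_q)L / K(E_q)$ (cyclic of degree $m'$, since $L/K$ is linearly disjoint from the regular extension $K(E_q)/K$). By the classical theory of cyclic algebras (\cite{SerreLF}, XIV), this relative Brauer group is exactly $\{\delta(f) \cup \tilde\psi : f \in K(E_q)^\unit\}$, where $\tilde\psi$ is the pullback to $K(E_q)$ of a generating character $\psi$ of $\Gal(L/K)^\vee$ of order $m' \mid m$; hence $\beta = \delta(f)\cup\tilde\psi$ is $\ZZ/m$-cyclic.

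It then remains to construct such an $L$. By the commutative diagram in lemma~\ref{lemma:brgptatecurve}, restriction along $L/K$ sends $(\chi, c)$ to $(\chi|_{G_L},\, [L:K]\cdot c)$, so the condition $\res_L\beta = 0$ amounts to $L \supseteq L_\chi$ (the cyclic extension cut out by $\chi$, so that $\chi|_{G_L}=0$) together with $(\text{order of } c) \mid [L:K]$. Writing $d$ for the order of $\chi$ and $e$ for the order of $c$, I therefore seek a cyclic $L/K$ containing $L_\chi$ with $[L:K] = m'$ a multiple of $\mathrm{lcm}(d,e)$ dividing $m$; taking $m' = \mathrm{lcm}(d,e)$ (which divides $m$ since $d,e \mid m$) will do.

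In case~(i), where $m$ is prime, this is immediate: if $\chi = 0$ then $\beta$ is a constant class, hence $\ZZ/m$-cyclic since $K$ is local; and if $\chi \ne 0$ then $d = m$, so $L = L_\chi$ has degree $m$ and automatically $e \mid m = [L:K]$, giving $\res_L\beta = 0$.

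The main work—and the point where hypothesis~(ii) enters—is producing $L$ in case~(ii). Here $\gcd(m, |\mu(K)|) = 1$, so by the previous lemma $H^1(G_K, \QQ/\ZZ)$ is $m$-divisible; concretely, for each prime $\ell \mid m$ the $\ell$-primary part of $H^1(G_K,\QQ/\ZZ)$ is a divisible group $(\QQ_\ell/\ZZ_\ell)^{s_\ell}$ with $s_\ell \ge 1$, because $\ell \nmid |\mu(K)|$ forces the pro-$\ell$ part of $G_K^{\mathrm{ab}} \cong \widehat{K^\unit}$ to be torsion-free. In such a divisible group the cyclic subgroup $\langle\chi\rangle$ of order $d$ can be enlarged to a cyclic subgroup $\langle\chi'\rangle$ of any prescribed order $m'$ divisible by $d$, by choosing coordinate by coordinate an $(m'/d)$-division point of $\chi$ of the correct order; taking $m' = \mathrm{lcm}(d,e) \mid m$ and $L = L_{\chi'}$ then satisfies $L \supseteq L_\chi$ and $e \mid m' = [L:K]$, as required. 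I expect this enlargement step to be the only delicate point: everything else is a formal consequence of lemma~\ref{lemma:brgptatecurve} and the cyclic-algebra description of relative Brauer groups, whereas the possibility of extending the given cyclic extension $L_\chi$ to one of the exact degree $m'$ is precisely what the coprimality condition $\gcd(m,|\mu(K)|)=1$ guarantees (and what primality trivializes in case~(i)).
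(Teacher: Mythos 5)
Your proposal is correct and follows essentially the same route as the paper: both use the isomorphism $\xi_q$ and the commutative diagram of lemma~\ref{lemma:brgptatecurve} to reduce to finding a cyclic extension $L/K$ of degree dividing $m$ with $\chi|_{G_L}=0$ and $[L:K]$ killing the $\QQ/\ZZ$-component, and both invoke the $m$-divisibility lemma to enlarge $\chi$ to a character of the required order in case~(ii). The only differences are presentational: the paper first reduces to prime-power $m$ via primary decomposition, whereas you handle composite $m$ directly by working $\ell$-primary-part by $\ell$-primary-part inside $H^1(G_K,\QQ/\ZZ)$, and you spell out the implicit step that a class split by a constant cyclic extension lies in a cyclic relative Brauer group and is therefore $\ZZ/m$-cyclic.
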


\begin{proof}
  By looking at $\ell$-primary parts, it suffices to consider the case
  when $m = \ell^r$ is a power of some prime $\ell$.  Let
  $\beta \in \Br(E_q)[m]$ be an element of order $m$, and
  $\xi_q(\beta) = (\chi, a) \in H^1_q(G_K, \QQ/\ZZ) \oplus \QQ/\ZZ$.
  Then either $\chi$ or $a$ has order $m = \ell^r$.  If
  $\chi\in H^1_q(G_K, \QQ/\ZZ)$ has order $m$, then it corresponds to
  a surjective morphism $\chi\colon G_K \onto \ZZ/m\ZZ$, hence to a
  degree $m$ cyclic extension $L \supseteq K$.  By the commutative
  diagram in lemma~\ref{lemma:brgptatecurve}, $\beta|_L = 0$, i.e.,
  $\beta$ is $\ZZ/m$-cyclic.  If $\chi$ has order $\ell^s$ with
  $s< r$, then there exists a character
  $\tilde \chi \in H^1(G_K, \QQ/\ZZ)$ of order $m = \ell^r$ such that
  $\ell^{r-s} \tilde \chi = \chi$; this is clear in case (i), while in
  case (ii) it follows from the previous lemma.  Then $\tilde \chi$
  defines a degree $\ell^r$ cyclic extension $L\supseteq K$ such that
  $\beta|_L = 0$, again by the diagram in
  lemma~\ref{lemma:brgptatecurve}.
\end{proof}

Noting that $|\mu(\QQ_p)| = p-1$ we obtain the following interesting

\begin{corollary}
  Let $E_q$ be a Tate elliptic curve over $\QQ_p$, and $r\in \NN$.
  Then all classes in $\Br(E_q)[p^r]$ are $\ZZ/p^r$-cyclic.
\end{corollary}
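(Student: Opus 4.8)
The plan is to obtain this statement as an immediate application of Theorem~\ref{thm:tatecurves}, specialized to the base field $K = \QQ_p$ and the $p$-power modulus $m = p^r$. The entire mathematical content is already packaged in that theorem; the corollary is just the observation that, over $\QQ_p$, the coprimality hypothesis of part~(ii) is automatically satisfied for $p$-primary torsion, so that one lands in the wild ($p \mid m$) regime that the theorem was designed to handle.

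Concretely, the first thing I would verify is the gcd condition $\gcd(m, |\mu(\QQ_p)|) = 1$ for $m = p^r$. Recall that the roots of unity in $\QQ_p^\unit$ are exactly the $(p-1)$-th roots of unity: these lift the elements of $\FF_p^\unit$ via Hensel's lemma, and no primitive $p$-th root of unity is present since $1 + (p)$ is torsion-free (for $p$ odd). Hence $|\mu(\QQ_p)| = p - 1$, as noted just before the statement. Since $p$ and $p-1$ are coprime, so are $p^r$ and $p-1$, giving $\gcd(p^r, |\mu(\QQ_p)|) = 1$. Thus hypothesis~(ii) of Theorem~\ref{thm:tatecurves} holds with $K = \QQ_p$ and $m = p^r$, and the theorem directly yields that every class in $\Br(E_q)[p^r]$ is $\ZZ/p^r$-cyclic.

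There is no genuine obstacle in this argument: the work is entirely in Theorem~\ref{thm:tatecurves}, whose proof, in the relevant case~(ii), rests on the $m$-divisibility of the character group $H^1(G_{\QQ_p}, \QQ/\ZZ)$ (equivalently the vanishing $\mu_{p^r}(\QQ_p) = 0$, which holds precisely because $\gcd(p^r, p-1)=1$). This divisibility is exactly what allows a character $\chi$ of order $p^s < p^r$ appearing in $\xi_q(\beta) = (\chi, a)$ to be lifted to a character $\tilde\chi$ of full order $p^r$ cutting out a degree-$p^r$ cyclic splitting field. The only point worth flagging is that the displayed identity $|\mu(\QQ_p)| = p-1$ is stated for $p$ odd; I would accordingly read the corollary as covering the case of odd residue characteristic, which is where the coprimality $\gcd(p^r, p-1)=1$ forces hypothesis~(ii) and the tame-roots-of-unity mechanism applies.
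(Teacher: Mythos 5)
Your proposal is correct and is exactly the paper's (implicit) argument: the corollary follows from Theorem~\ref{thm:tatecurves}(ii) applied with $K=\QQ_p$ and $m=p^r$, using $\gcd(p^r,|\mu(\QQ_p)|)=\gcd(p^r,p-1)=1$. Your caveat about $p=2$ is a legitimate observation the paper glosses over: there $|\mu(\QQ_2)|=2$ since $-1\in\QQ_2$, so hypothesis~(ii) fails and only the $r=1$ case is rescued by hypothesis~(i).
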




\section{Indecomposable algebras}

In this section, we assume once again that $X$ is the generic fiber of
a smooth projective $R$-curve $\XX$.  Our goal is to show that there
are indecomposable algebras in the function field $K(X)$ of $X$ with
period $p^2$ and index $p^3$, assuming enough roots of unity
(theorem~\ref{thm:indec-algebr}).  The construction basically follows
the strategy in \cite{BMT}, but we now have to struggle with the fact
that for the $p$-primary case we do not have at our disposal general
lifting theorems, as was the case in the prime-to-$p$ situation
treated in that paper.  That unfortunately restricts our construction
to the above period $p^2$ and index $p^3$.  As in \cite{BMT}, we rely
on the following well-known criterion for indecomposability:

\begin{lemma}\label{lemma:saltman-trick}
  Let $n = p^r$ and $\beta \in \Br(K(X))[n]$ be such that
  $\ind (p\beta) = \frac1p \ind \beta$.  Then $\beta$ is
  indecomposable.
\end{lemma}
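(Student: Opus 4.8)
The plan is to argue by contraposition: I will assume that the underlying division algebra $D$ of $\beta$ decomposes nontrivially and derive that $\ind(p\beta)$ must be strictly smaller than $\frac1p\ind\beta$, contradicting the hypothesis. So suppose $D \cong D_1 \otimes_{K(X)} D_2$ with both $D_i$ nontrivial division algebras, and write $\beta = \beta_1 + \beta_2$ with $\beta_i = [D_i]$. Since $\beta$ is $p$-primary its index is a power of $p$, say $\ind\beta = p^m$; and because $D_1 \otimes D_2$ is again a division algebra we have $\deg D = \deg D_1 \cdot \deg D_2$, hence $\ind\beta = \ind\beta_1 \cdot \ind\beta_2$. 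Writing $\ind\beta_i = p^{m_i}$, this gives $m_1 + m_2 = m$ with $m_1, m_2 \ge 1$.

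The heart of the argument is the estimate $\ind(p\beta_i) \le \frac1p\ind\beta_i = p^{m_i - 1}$ for each factor, and this is the step I expect to require the most care. I would establish it as follows. A $p$-primary central division algebra $D_i$ of index $p^{m_i}$ contains a subfield $E_i \supseteq K(X)$ with $[E_i : K(X)] = p^{m_i - 1}$; such a subfield can be produced by iterating the passage to the centralizer of a degree-$p$ subfield, each step lowering the index by exactly one factor of $p$. For such an $E_i$ the centralizer $C_{D_i}(E_i)$ is a division algebra over $E_i$ of degree $p$, and since $\ind(\beta_i|_{E_i}) = \deg C_{D_i}(E_i) = p$, the restriction $\beta_i|_{E_i}$ has period dividing $p$. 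Therefore $p\beta_i|_{E_i} = 0$, i.e. $E_i$ splits $p\beta_i$, and consequently $\ind(p\beta_i) \mid [E_i : K(X)] = p^{m_i - 1}$. (When $m_i = 1$ this reads $E_i = K(X)$ and $p\beta_i = 0$, which is consistent.)

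With this in hand the conclusion is immediate. Since $p\beta = p\beta_1 + p\beta_2$ and the index of a sum divides the product of the indices,
$$\ind(p\beta) \mid \ind(p\beta_1)\cdot\ind(p\beta_2) \le p^{m_1 - 1}\cdot p^{m_2 - 1} = p^{m-2}.$$
But the hypothesis $\ind(p\beta) = \frac1p\ind\beta = p^{m-1}$ would force $p^{m-1} \mid p^{m-2}$, which is absurd. Hence $D$ admits no nontrivial decomposition and $\beta$ is indecomposable. The only genuinely nontrivial input is the index-drop estimate of the second paragraph; everything else is bookkeeping with the multiplicativity of the index under tensor products of division algebras and under restriction to splitting fields.
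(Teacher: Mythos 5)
The paper offers no proof of this lemma at all --- it is quoted as a ``well-known criterion'' following [BMT] --- so there is no in-paper argument to match yours against. Your overall architecture is in fact the standard one: write $\ind\beta=\ind\beta_1\cdot\ind\beta_2=p^{m_1+m_2}$ with $m_1,m_2\ge 1$, bound $\ind(p\beta)$ by $\ind(p\beta_1)\cdot\ind(p\beta_2)$, and feed in the estimate $\ind(p\gamma)\mid\frac1p\ind\gamma$ for each $p$-primary class $\gamma$ of index $>1$ to contradict the hypothesis. All of that bookkeeping is correct.

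The gap is in your justification of that key estimate. You assert that a division algebra $D_i$ of degree $p^{m_i}$ contains a subfield of degree $p$ over the center, and then iterate through centralizers to reach a subfield $E_i$ of degree $p^{m_i-1}$. That first step is not a fact you can appeal to over an arbitrary field: a maximal separable subfield $L\subset D_i$ has degree $p^{m_i}$, but if the Galois group of its Galois closure acts primitively on the embeddings of $L$ then $L$ has no proper intermediate fields, and nothing forces some other element of $D_i$ to generate a degree-$p$ subfield. The existence of small-degree subfields of $p$-primary division algebras is a genuinely delicate question (adjacent to the crossed-product problem), and this is precisely why the standard proof of the inequality $\ind(p\gamma)\mid\frac1p\ind\gamma$ first passes to the fixed field $F'$ of a Sylow $p$-subgroup of $\Gal(M/F)$, where $M$ is the Galois closure of a maximal separable subfield of the division algebra of $\gamma$: the degree $[F':F]$ is prime to $p$, so none of the ($p$-primary) indices in play change under restriction to $F'$, while over $F'$ the class is split by a Galois extension with $p$-group Galois group, and there one really can peel off degree-$p$ (even cyclic) subfields one at a time exactly as you describe. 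With that prime-to-$p$ reduction inserted --- or with the estimate simply cited as the classical result it is (see e.g.\ Gille and Szamuely, \emph{Central Simple Algebras and Galois Cohomology}, \S 4.5) --- your argument closes up and proves the lemma.
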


We begin with a

\begin{lemma}
  Let $C$ be a smooth projective curve over the finite field
  $k = \FF_q$.  For all sufficiently large integers $m\gg 0$, there
  exist distinct closed points $P_\infty, P_1, P_2\in C$, all of
  degree $m$, such that the divisors $[P_1]-[P_\infty]$ and
  $[P_2]-[P_\infty]$ are principal.
\end{lemma}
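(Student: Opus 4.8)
The plan is to turn the statement into a pigeonhole count: the divisors $[P_1]-[P_\infty]$ and $[P_2]-[P_\infty]$ are principal exactly when $P_\infty$, $P_1$ and $P_2$ define one and the same class in the degree-$m$ part $\Pic^m(C)$ of the Picard group. So it suffices to show that, for all $m\gg 0$, some class in $\Pic^m(C)$ is represented by at least three distinct closed points of degree~$m$. I would obtain this by comparing two quantities: the number of such points (which blows up with $m$) against $|\Pic^m(C)|$ (which stays bounded).

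For the bounded side, since $\Br(\FF_q)=0$ and $C$ is geometrically connected, there is a short exact sequence $0\to \Pic^0(C)\to \Pic(C) \stackrel{\deg}{\onto} \ZZ\to 0$, the surjectivity of $\deg$ being F.~K.~Schmidt's theorem that a curve over a finite field has index~$1$. Thus each $\Pic^m(C)$ is a nonempty coset of the finite group $\Pic^0(C)=J_C(\FF_q)$ (with $J_C$ the Jacobian), so $|\Pic^m(C)| = h$ is independent of $m$, where $h\df=|\Pic^0(C)|$.

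For the growing side, let $a_m$ be the number of closed points of $C$ of degree exactly $m$ and $N_m = \#C(\FF_{q^m})$. From $N_m=\sum_{d\mid m} d\,a_d$ and the inequality $d\,a_d\le N_d$ I get $m\,a_m \ge N_m - \sum_{d\le m/2} N_d$, and the Hasse--Weil bound $|N_m-(q^m+1)|\le 2g\,q^{m/2}$ then yields $m\,a_m \ge q^m - O(q^{m/2})$; in particular $a_m\to\infty$. Hence, for $m$ large enough that $a_m > 2h$, the map sending a degree-$m$ closed point $P$ to its class $[P]\in \Pic^m(C)$ (domain of size $a_m$, target of size $h$) must have some fiber of size at least three, by pigeonhole. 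Choosing three points $P_\infty,P_1,P_2$ in a common fiber, the classes $[P_1]-[P_\infty]$ and $[P_2]-[P_\infty]$ lie in $\Pic^0(C)$ and vanish, i.e.\ are principal, as required.

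There is really no hard obstacle here: the only genuine input is that $a_m\to\infty$, which is the standard consequence of the Weil bounds via M\"obius inversion, and everything else is bookkeeping. The one place to take a little care is guaranteeing that $\Pic^m(C)$ is nonempty and of size independent of $m$; this is exactly where geometric connectedness (so that $J_C(\FF_q)$ is a finite group of a fixed order) and index~$1$ enter.
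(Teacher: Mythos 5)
Your proof is correct and is essentially the paper's argument: both rest on the observation that the number of degree-$m$ closed points grows like $q^m/m$ (the paper cites a ready-made estimate from Hirschfeld--Korchm\'aros--Torres, you rederive it from Hasse--Weil and M\"obius-style bookkeeping) while $|\Pic^0(C)|$ is a fixed finite number, so for $m\gg 0$ the pigeonhole principle forces three degree-$m$ points in the same class. The only cosmetic difference is that the paper pigeonholes the classes $[P]-[Q]$ in $\Pic^0(C)$ for a fixed auxiliary $Q$ of degree $m$, whereas you pigeonhole the classes $[P]$ in the coset $\Pic^m(C)$; these are the same count.
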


\begin{proof}
  Let $g$ be the genus of $C$, and for each $m\in \NN_{>0}$ denote by
  $S(m)$ the set of all closed points $P \in C$ of degree
  $[\kappa(P): k] = m$.  By \cite{HKT}, theorem~9.25, p.346,
  $$\left||S(m)|-\frac{q^{m}}{m}\right| <(2+7 g) \cdot
  \frac{\sqrt{q^{m}}}{m} \text{ for all } m\ge 2
  \implies \lim_{m\to \infty} \frac{|S(m)|}{q^m/m} = 1
  $$
  Hence $|S(m)|$ asympotically grows as $q^m/m$, and if $m$ is large
  enough then $|S(m)| > 2 |\Pic^0(C)|$.  Fix such an $m$, and pick an
  arbitrary point $Q \in S(m)$.  Then in the set
  $\{ [P] - [Q] \in \Div^0(C) \mid P \in S(m) \}$ there will be three
  distinct divisors $[P_\infty] - [Q]$, $[P_1] - [Q]$, $[P_2] - [Q]$
  having the same image in $\Pic^0(C)$, hence their differences
  $[P_1]-[P_\infty]$ and $[P_2]-[P_\infty]$ will have trivial image in
  $\Pic^0(C)$, i.e., they will be principal divisors.
\end{proof}

Let $\XX\to \Spec R$ be a smooth projective curve as above.  Fix
$m\in \NN$ such that
\begin{itemize}
\item $m$ is not divisible by $p$;
\item $m > 2g-2$, where $g$ is the genus of $\XX_s$;
\item $m\gg 0$ satisfies the conditions of the lemma for $C = \XX_s$,
  so that there are three distinct degree $m$ points
  $P_\infty, P_1, P_2 \in \XX_s$ such that
  $$\divisor(\overline f_i) = [P_i] - [P_\infty] \in \Div(\XX_s)
  \qquad(i=1,2)
  $$
  for some elements $\overline f_i \in k(\XX_s)^\unit$ in the function
  field of $\XX_s$.
\end{itemize}
By lemma~\ref{lemma:integral-models}(iii), there is a horizontal
divisor $D_\infty$ on $\XX$ intersecting the special fiber $\XX_s$
transversally at $P_\infty\in \XX_s$.  And since $m > 2g-2$,
\cite{HarbaterHartmann} proposition~4.1, p.71, shows that the map
$H^0(\XX, \sheaf{O}_{\XX}(D_\infty)) \onto H^0(\XX_s,
\sheaf{O}_{\XX_s}(P_\infty))$ is surjective, hence we may choose
$f_i\in H^0(\XX, \sheaf{O}_{\XX}(D_\infty)) \subset K(X)$ lifting
$\overline f_i$.  Then all components of $\divisor(f_i)$ are
horizontal (since $\overline f_i \ne 0$), and $D_\infty$ is the only
pole of $f_i$.  Hence
$$\divisor(f_i) = [D_i] - [D_\infty]\in \Div(\XX)
\qquad(i=1,2)
$$
where $D_i$ is a horizontal divisor on $\XX$ restricting
(transversally) to $P_i\in \XX_s$ for $i=1,2$.  Moreover, since
$\sheaf{O}_{\XX_s, P_i}/(\overline f_i) \supset k$ is a (separable)
degree $m$ field extension, $\sheaf{O}_{\XX, P_i}/(f_i) \supset R$ is
a finite unramified extension of degree $m$, hence
$\sheaf{O}_{\XX, P_i}/(f_i)$ is a complete $p$-adic ring with
uniformizer $\pi$.  Now set $f = f_1f_2$ so that it restricts to
$\overline f = \overline f_1 \overline f_2$ and
\begin{align*}
  \divisor(f) &= [D_1] + [D_2] - 2[D_\infty] \in \Div(\XX)\\
  \divisor(\overline f) &= [P_1] + [P_2] - 2[P_\infty] \in \Div(\XX_s)
\end{align*}

From now on, suppose that $p\ne 2$ and $\mu_{p^2} \subset K$.  We keep
the notation of subsection~\ref{sec:kumm-artin-schr}, writing
$\zeta_2\in \mu_{p^2}$ for a primitive $p^2$-th root of unity,
$\lambda_2 = \zeta_2-1$, and so on.  Observe that $\lambda_2$ is a
uniformizer of $\QQ_p(\zeta_2)$, a subfield of $K$, hence
$\pi\mid \lambda_2$ in $R$.  Consider the degree $p^2$
Artin-Schreier-Witt extension of $k(\XX_s)$ given by
\begin{align}
  x^p - x &= \overline f \label{eq:5}\\
  y^p - y - c(x^p, - x)  &= \overline f^{-1}\label{eq:6}
\end{align}
where $c(X, Y)\in \ZZ[X, Y]$ is the polynomial \eqref{eq:10} defined
in subsection~\ref{sec:kumm-artin-schr}.  Since $p\ne 2$, it is easy
to check that both $P_1$ and $P_2$ split completely in
$k(\XX_s)(x) \supset k(\XX_s)$, and each place above $P_1$ or $P_2$ is
totally ramified in $k(\XX_s)(x, y) \supset k(\XX_s)(x)$, while
$P_\infty$ is totally ramified in $k(\XX_s)(x, y) \supset k(\XX_s)$.
In particular, this shows that the latter extension indeed has degree
$p^2$.

Now we lift $k(\XX_s)(x, y) \supset k(\XX_s)$ to the Kummer extension
of $K(X)$ defined by equations
\begin{align}
  \frac{\left(\lambda x+1\right)^{p}-1}{\lambda^{p}} &= f\label{eq:4}\\
  \frac{1}{\lambda^{p}}\left[\frac{\left(\lambda
  y+F(x)\right)^{p}}{\lambda x+1}-G(f)\right]& = \frac{1}{f}\label{eq:7}
\end{align}
where $F, G\in R[x]$ are the $p$-truncated exponentials \eqref{eq:11}.
Let $\chi\in H^1(K(X), \ZZ/p^2)$ be a Kummer character of order $p^2$
associated to this extension, and $W$ (respectively $Y$) be the
normalization of $X$ in the Kummer extension of $K(X)$ defined by
\eqref{eq:4} (respectively \eqref{eq:4} and \eqref{eq:7}), so that the
composition
$$Y \to W \to X$$
defines a degree $p^2$ cyclic cover of $X$.  Observe that the function
fields of $W$ and $Y$ are given by $K(W) = K(X)(x)$ and
$K(Y) = K(X)(x, y)$, respectively.

\begin{lemma}\label{lemma:indec-algebr}
  In the above notation, for $i=1,2$ let $A_i$ and $B_i$ be the
  normalizations of $\sheaf{O}_{\XX, P_i}$ in $K(W)$ and $K(Y)$
  respectively.  Then
  \begin{enumerate}[(i)]
  \item $A_i$ and $B_i$ are regular semilocal rings (hence UFDs), each
    with exactly $p$ maximal ideals.
    
  \item The prime divisor $D_i$ of $\XX$ splits completely into $p$
    disjoint horizontal divisors $D_{i1}', \ldots, D_{ip}'$ in
    $\Spec A_i$, i.e.,
    $$D_i \times_{\XX} \Spec A_i = D_{i1}' \sqcup \cdots \sqcup D_{ip}'$$
    with each $D_{ij}'\cong D_i$, corresponding bijectively to the
    maximal ideals of $A_i$.  In particular, $f$ is a local parameter
    for each $D_{ij}'$.

  \item Each $D_{ij}'$ is totally ramified in $\Spec B_i$.
  \end{enumerate}
\end{lemma}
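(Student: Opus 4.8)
The plan is to analyze the two layers $W\to X$ and $Y\to W$ separately over the two-dimensional regular local ring $\sheaf O:=\sheaf O_{\XX,P_i}$, whose maximal ideal is $\ideal m=(f,\pi)$ (near $P_i$ the divisors $D_\infty$ and $D_{3-i}$ do not pass through $P_i$, so $f=f_i\cdot(\text{unit})$ is a local parameter for $D_i$ there). Recall from subsection~\ref{sec:kumm-artin-schr} that $\Psi_0(x)=\frac{(\lambda x+1)^p-1}{\lambda^p}$ is a monic degree-$p$ polynomial in $\sheaf O[x]$ with $K(W)=K(X)(x)$ and $\Psi_0(x)=f$, and that, since $\pi\mid\lambda_2\mid\lambda$ and the special fibre of the Sekiguchi--Suwa sequence is Artin--Schreier--Witt, one has $\Psi_0(x)\equiv x^p-x\pmod\pi$.

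First I would treat $A_i=\sheaf O[x]/(\Psi_0(x)-f)$ (the minimal polynomial being irreducible of degree $p$). The fibre over $\ideal m$ is $\kappa(P_i)[x]/(x^p-x)=\prod_{j\in\FF_p}\kappa(P_i)$, so there are exactly $p$ points above $\ideal m$; and since $\Psi_0'(x)\equiv-1\pmod{\ideal m}$ is a unit on this fibre, the Jacobian criterion shows $A_i$ is étale over $\sheaf O$ at each of them. Being étale over the regular ring $\sheaf O$ forces regularity, hence normality, so $A_i$ is precisely the semilocalization there: a regular semilocal ring with $p$ maximal ideals, a UFD by Auslander--Buchsbaum. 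For (ii), reducing $f=\Psi_0(x)$ modulo $f$ gives $A_i/(f)=S[x]/(\Psi_0(x))$ with $S:=\sheaf O/(f)$ the complete DVR attached to $D_i$; since $\Psi_0(x)\equiv\prod_j(x-j)\pmod\pi$ splits into distinct linear factors, Hensel's lemma over $S$ yields $\Psi_0(x)=\prod_j(x-\alpha_j)$, whence $A_i/(f)=\prod_j S$ and $D_i$ splits into $p$ disjoint copies $D_{ij}'\cong D_i$, with $f=(x-\alpha_j)\cdot(\text{unit})$ a local parameter for each.

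For (iii), I localize $A_i$ at the generic point $\xi_{ij}'$ of $D_{ij}'$, a DVR with uniformizer $f$ and residue field the $p$-adic field $\Frac S$ of characteristic~$0$. Rewriting \eqref{eq:7} as $w^p=u\bigl(G(f)+\lambda^p/f\bigr)$ with $w=\lambda y+F(x)$ and $u=\lambda x+1$, and noting that $\lambda$ is a unit in this DVR (it is a nonzero element of $\Frac S$) while $G(f)\equiv1$ and $u$ are units, I get $v_{\xi_{ij}'}(w^p)=v_{\xi_{ij}'}(\lambda^p/f)=-1$. As the residue field has characteristic~$0$ and contains $\mu_p$, the degree-$p$ extension $K(Y)/K(W)$ is the tame Kummer extension attached to an element of valuation coprime to $p$, hence totally ramified at $\xi_{ij}'$, which is exactly (iii).

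It remains to show that $B_i$ is regular semilocal with $p$ maximal ideals, and I expect this to be the main obstacle, being the point where the wild ramification of the special fibre intervenes. On the sheet $x\equiv\alpha_j$ the second Artin--Schreier--Witt layer restricts to $y^p-y=\overline f^{-1}+c_j$ with $c_j$ constant; since $\overline f^{-1}$ has a simple pole at $P_i$, this is totally (wildly) ramified there. For regularity I would argue fibrewise over $R$: $B_i$ is $\pi$-torsion-free, hence $R$-flat, so by the fibre criterion it suffices to prove that $B_i\tensor_R k$ is regular of dimension~$1$. This is where the Sekiguchi--Suwa interpolation of subsection~\ref{sec:kumm-artin-schr} is indispensable: the flatness of the isogeny $\Psi\colon\mathcal W_2\to\mathcal V_2$ provides a finite flat $R$-model of $Y\to X$ whose formation commutes with reduction modulo $\pi$ and whose special fibre is the Artin--Schreier--Witt extension. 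Computing the normalization of that special fibre explicitly --- the substitution $s=1/y$ turns $y^p-y=\overline f^{-1}+c_j$ into $f=s^p\cdot(\text{unit})$, so it is the regular ring $\kappa(P_i)[[s]]$, a single smooth point --- shows simultaneously that there is exactly one prime of $B_i$ over $\ideal m_j$ (giving $p$ maximal ideals in total) and that $B_i\tensor_R k$ is regular. The fibre criterion then upgrades this to regularity of $B_i$. The delicate step, and the crux of the whole argument, is justifying that reduction modulo $\pi$ commutes with normalization at the wild point, i.e.\ that the explicit Sekiguchi--Suwa model is already normal; this is precisely what the flatness of $\Psi$ secures.
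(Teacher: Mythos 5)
Your handling of $A_i$, of (ii), and of (iii) is essentially sound and close to the paper's: for $A_i$ you use the \'etaleness of $\Psi_0(x)-f$ over $\sheaf{O}_{\XX,P_i}$ (the paper instead exhibits the two generators $(x-a,\pi)$ of each maximal ideal, which is equivalent), (ii) is the same Hensel argument over $S=\sheaf{O}_{\XX,P_i}/(f)$, and for (iii) your valuation count at the generic point of $D_{ij}'$ (residue field $\Frac S$ of characteristic $0$, $w^p=u\bigl(G(f)+\lambda^p/f\bigr)$ of valuation $-1$, hence tame total ramification) is a valid substitute for the paper's explicit computation of $D_{ij}'\times_{\Spec A_i}\Spec B_i$ as $\Spec A_{i,(x-a,\pi,f)}[1/y]/(f,(1/y)^p)$.

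The genuine gap is in the $B_i$ half of (i), and you have flagged it yourself without closing it. Your outline --- produce a finite flat $\sheaf{O}_{\XX,P_i}$-model of the second layer, check its special fibre is regular of dimension one, invoke the fibre criterion, conclude normality and hence that the model is $B_i$ --- would work, but the assertion that ``the flatness of $\Psi$ provides a finite flat $R$-model whose formation commutes with reduction mod $\pi$'' is not a proof and is not even directly available here: the Sekiguchi--Suwa pullback requires a morphism $\Spec B\to\mathcal{V}_2$, i.e.\ $b_1=1/f\in B$ with $\lambda^p b_1+G(b_0)$ a unit, and $1/f\notin\sheaf{O}_{\XX,P_i}$ precisely because $f$ lies in the maximal ideal. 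The actual content to be proved is that $1/y$ is integral over $A_i$ and satisfies a degree-$p$ equation whose leading coefficient is a \emph{unit} of $A_i$; this is the paper's equation \eqref{eq:8}, obtained by multiplying \eqref{eq:7} by $(1+\lambda x)f/y^p$, and its integrality rests on the Sekiguchi--Suwa congruence $F(T)^{p} \equiv (\lambda T + 1)\, G\bigl(\tfrac{(\lambda T + 1)^p-1}{\lambda^p}\bigr) \pmod{\lambda^p}$ (\cite{SekiguchiSuwa}, 5.15), which makes $\bigl(F(x)^p-(1+\lambda x)G(f)\bigr)/\lambda^p$ an element of $A_i$, together with $1+\lambda x\in A_i^\unit$. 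Without this identity you have neither the model nor its special fibre, and your computation of the \emph{normalization of the special fibre} cannot stand in for the \emph{special fibre of the normalization}, which is what the fibre criterion consumes. Once \eqref{eq:8} is in hand the paper finishes more directly than you propose: it reads off from \eqref{eq:8} and \eqref{eq:12} that each of the $p$ maximal ideals $(x-a,\pi,f,1/y)$ of $A_i[1/y]$ becomes $(\pi,1/y)$ after localization, so $A_i[1/y]$ is a two-dimensional regular (hence normal) semilocal ring with $p$ maximal ideals and $B_i=A_i[1/y]$, with no appeal to the fibre criterion.
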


\begin{proof}
  We will prove the lemma for $i=1$.  Observe that $P_1$ is the closed
  point of $D_1$, $f_2\in \sheaf{O}_{\XX, P_1}^\unit$ (since
  $\overline f_2(P_1) \ne 0$), both $f_1$ and $f = f_1f_2$ are local
  parameters for $D_1$ in $\Spec \sheaf{O}_{\XX, P_1}$, and
  $(\pi, f) = (\pi, f_1)$ is the maximal ideal of the $2$-dimensional
  regular local ring $\sheaf{O}_{\XX, P_1}$.

  Expanding \eqref{eq:4}, since $\lambda^{p-1} \mid p$ in
  $R \subset \sheaf{O}_{\XX, P_1}$, we get the following monic
  equation for $x$ with coefficients in $\sheaf{O}_{\XX, P_1}$
  \begin{equation}\label{eq:12}
    x^p + \frac{p}{\lambda} x^{p-1} + \frac{p(p-1)}{2\lambda} x^{p-2} + \cdots + \frac{p}{\lambda^{p-1}}x - f = 0
  \end{equation}
  whose reduction modulo $\pi$ is \eqref{eq:5}.  Since
  $\sheaf{O}_{\XX, P_1}[x]\supset \sheaf{O}_{\XX, P_1}$ is an integral
  extension, if we show that $\sheaf{O}_{\XX, P_1}[x]$ is a regular
  semilocal ring then $\sheaf{O}_{\XX, P_1}[x]$ will be a UFD (by
  Auslander-Buchsbaum's theorem and \cite{Matsumura}, exercise 20.5,
  p.169), hence normal, showing also that
  $A_1 = \sheaf{O}_{\XX, P_1}[x]$.  Since \eqref{eq:12} reduces to
  $x^p-x = 0$ modulo the maximal ideal $(\pi, f)$ of
  $\sheaf{O}_{\XX, P_1}$, the fiber of $(\pi, f)$ under
  $\Spec \sheaf{O}_{\XX, P_1}[x] \to \Spec \sheaf{O}_{\XX, P_1}$ is
  given by
  $\Spec \sheaf{O}_{\XX, P_1}[x]/(\pi, f) = \Spec
  \kappa(P_1)[x]/(x^p-x)$, a disjoint union of $p$ copies of
  $\Spec \kappa(P_1)$.  This shows that $\sheaf{O}_{\XX, P_1}[x]$ is
  semilocal with $p$ maximal ideals $(x - a, \pi, f)$ for
  $a=0, 1, \ldots, p-1$.  Equation \eqref{eq:12} shows that
  $(x - a, \pi, f) = (x-a, \pi)$ in the localization
  $\sheaf{O}_{\XX, P_1}[x]_{(x - a, \pi, f)}$, therefore
  $\sheaf{O}_{\XX, P_1}[x]$ is regular.

  Since
  $D_1 \times_{\XX} \Spec A_1 = \Spec \sheaf{O}_{\XX, P_1}[x]/(f)$, in
  order to show (ii) we have to show that
  $\sheaf{O}_{\XX, P_1}[x]/(f)$ is isomorphic to a product of $p$
  copies of $\sheaf{O}_{\XX, P_1}$.  But $\sheaf{O}_{\XX, P_1}/(f)$ is
  a complete $p$-adic ring with uniformizer $\pi$, and \eqref{eq:12}
  reduces to the separable equation $x^p-x =0$ modulo $(\pi, f)$.
  Therefore the result follows by Hensel's lemma.
  
  Expanding \eqref{eq:7} and multiplying by $(1+\lambda x) f/y^p$ we get
  \begin{gather}
    \left(\frac{F(x)^p - (1+\lambda x) G(f)}{\lambda^p}\cdot f - (1+\lambda x) \right)\cdot (1/y)^p
    + \frac{pf}{\lambda^{p-1}} F(x)^{p-1} \cdot (1/y)^{p-1}\nonumber\\
    \qquad\qquad + \cdots + \frac{pf}{\lambda} F(x)\cdot (1/y) + f = 0\label{eq:8}
  \end{gather}
  The last equation implies that $A_1[1/y] \supset A_1$ is an integral
  extension.  In fact, by \cite{SekiguchiSuwa}, 5.15, p.236, one has
  that
  $$F(T)^{p} \equiv (\lambda T + 1)\cdot G\left(\frac{(\lambda T + 1)^p-1}{\lambda^p}\right) \pmod{\lambda^p}$$
  Moreover, $1+\lambda x$ is a unit in the semilocal ring
  $A_1 = \sheaf{O}_{\XX, P_1}[x]$ since $\lambda$ belongs to the
  maximal ideal $(\pi, f)$ of $\sheaf{O}_{\XX, P_1}$.  Therefore the
  coefficient of $(1/y)^p$ belongs to $A_1^\unit$.  Equation
  \eqref{eq:8} also shows that the fiber under
  $\Spec A_1[1/y] \to \Spec A_1$ of any maximal ideal $(x-a, \pi, f)$
  with $a=0, 1, \ldots, p-1$ is
  $\Spec \frac{\kappa(P_1)[1/y]}{(1/y)^p}$, i.e., consists of a single
  maximal ideal $(x-a, \pi, f, 1/y)$.  Hence $A_1[1/y]$ is semilocal
  with $p$ maximal ideals.

  Next we show that the $2$-dimensional semilocal ring $A_1[1/y]$ is
  regular, hence normal, thus proving that $B_1 = A_1[1/y]$.  In the
  localization ${A_1}_{(x-a, \pi, f)}$, we have
  $(x-a, \pi, f) = (\pi, f)$ by \eqref{eq:12}, hence in
  $A_1[1/y]_{(x-a, \pi, f, 1/y)}$ we have
  $(x-a, \pi, f, 1/y) = (1/y, \pi)$ by \eqref{eq:8}, showing that
  $A_1[1/y]$ is regular.

  Finally, if $D_{1i}'\subset \Spec A_1$ is the prime divisor
  corresponding to the maximal ideal $(x-a, \pi, f)$ of $A_1$ in (ii),
  since $f$ is a local parameter for
  $D_{1i}' = \Spec {A_1}_{(x-a, \pi, f)}/(f)$, from \eqref{eq:8} we
  get
  $$D_{1i}' \times_{\Spec A_1} \Spec B_1 = \Spec \frac{{A_1}_{(x-a, \pi, f)}[1/y]}{(f, (1/y)^p)}
  $$
  Hence the pre-image of $D_{1i}'$ in $\Spec B_1$ is the divisor
  locally given by $1/y$, with multiplicity $p$, i.e., $D_{1i}'$ is
  totally ramified in $\Spec B_1$.
\end{proof}

Now we can show

\begin{theorem}\label{thm:indec-algebr}
  Suppose $p\ne 2$, and let $X$ be the generic fiber of a smooth
  projective geometrically connected $R$-curve $\XX$.  Assume
  $\mu_{p^2} \subset K$, and let $\chi\in H^1(K(X), \ZZ/p^2)$ be the
  degree $p^2$ character constructed above, corresponding to the field
  extension $K(Y) \supset K(X)$.  Let $h = f_1/f_2$ and
  $\psi\in H^1(K, \ZZ/p^2)$ be the $\pi$-unramified character of order
  $p^2$.  Define
  $$\beta \df= \psi \cup \delta_{p^2} (h) + \chi \cup \delta_{p^2} (\pi)\in \Br(K(X))[p^2]$$
  Then $\beta$ is the class of an indecomposable algebra with
  $\ind \beta = p^3$.
\end{theorem}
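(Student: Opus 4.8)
The plan is to invoke the indecomposability criterion of lemma~\ref{lemma:saltman-trick} with $n=p^2$: it suffices to prove the two index equalities $\ind\beta=p^3$ and $\ind(p\beta)=p^2$, for then $\ind(p\beta)=\tfrac1p\ind\beta$, so $\beta$ is indecomposable of index $p^3$. Since the index can only drop under scalar extension while $p\beta$ visibly has period $p$, I would extract the lower bounds $p^3\mid\ind\beta$ and $p^2\mid\ind(p\beta)$ from a single well-chosen completion, and then establish matching upper bounds by a local--global argument over the model $\XX$.

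For the lower bounds, complete $K(X)$ along the special fibre $\XX_s$, where $\pi$ is a uniformizer. Since $\divisor(h)$ and $\divisor(f_i)$ are horizontal, $v_{\XX_s}(h)=0$ and $\psi$ is $\pi$-unramified, so $\psi\cup\delta_{p^2}(h)$ is unramified along $\XX_s$ with reduction $\overline\psi\cup\delta_{p^2}(\overline h)$, where $\overline h=\overline f_1/\overline f_2$ has divisor $[P_1]-[P_2]$ on $\XX_s$. By the Sekiguchi--Suwa interpolation the cover cutting out $\chi$ is unramified along the divisor $\XX_s$, with reduction the order-$p^2$ Artin--Schreier--Witt character $\chi|_{k(\XX_s)}$ of \eqref{eq:5}--\eqref{eq:6}; hence $\partial_{\XX_s}(\beta)=\chi|_{k(\XX_s)}$. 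Nakayama's formula (lemma~\ref{lemma:NAK}) then gives $\ind(\beta|_{\widehat{K(X)}_{\XX_s}})=p^2\cdot\ind\bigl((\overline\psi\cup\delta_{p^2}\overline h)|_{k(Y_s)}\bigr)$, where $k(Y_s)$ is the degree-$p^2$ Artin--Schreier--Witt extension. Over this global function field I would compute the index by class field theory as the lcm of local invariants: at a place over $P_1$ the residue degree is $m$ and the ramification index is $p$ (by lemma~\ref{lemma:indec-algebr}, $P_1$ splits in $W_s$ then ramifies totally in $Y_s$), so the invariant is $pm/p^2=m/p$, of order exactly $p$ precisely because $\gcd(m,p)=1$. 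Thus $\ind(\beta|_{\widehat{K(X)}_{\XX_s}})=p^3$, and likewise $\ind(p\beta|_{\widehat{K(X)}_{\XX_s}})=p\cdot p=p^2$, giving $p^3\mid\ind\beta$ and $p^2\mid\ind(p\beta)$.

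For the upper bounds I would appeal to a local--global principle for the index over the semi-global field $K(X)$ (\cite{HHK}): $\ind\beta$ is the lcm of the local indices $\ind(\beta|_{F_\xi})$ as $\xi$ ranges over the generic and closed points of $\XX_s$. At the generic point this index is $p^3$, as just computed. At a closed point $P\in\XX_s$ the overfield $F_P$ is $2$-local with finite residue field, so its residue field $E=\widehat{k(\XX_s)}_P$ is a local field and $\Br(E)=\QQ/\ZZ$; writing the Witt decomposition $\beta|_{F_P}=\alpha_P+\delta_{p^2}(\pi)\cup\chi_P$ and applying Nakayama, $\ind(\beta|_{F_P})=|\chi_P|\cdot\ind(\alpha_P|_{E(\chi_P)})$ with $\alpha_P\in\Br(E)$ of order at most $p^2$. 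Since restriction along the degree-$|\chi_P|$ extension $E(\chi_P)/E$ of local fields multiplies the invariant by $|\chi_P|$, the factor $\ind(\alpha_P|_{E(\chi_P)})$ is the order of $|\chi_P|\cdot\alpha_P$, so $\ind(\beta|_{F_P})=p^{\max(a,b)}\le p^2$ when $\alpha_P$ has order $p^a$ and $|\chi_P|=p^b$. Hence the local indices away from the generic point are all at most $p^2$, the lcm is $p^3$, and $\ind\beta=p^3$; the same bookkeeping gives $\ind(p\beta)=p^2$, and lemma~\ref{lemma:saltman-trick} finishes the proof.

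The main obstacle is the upper bound. Everything rests on the local--global principle for the index being available in the wildly ramified $p$-primary regime, and on the delicate asymmetry it exposes: the full index $p^3$ is concentrated at the generic point of $\XX_s$, whose residue field $k(\XX_s)$ is a global field of characteristic $p$ over which the unramified symbol $\overline\psi\cup\delta_{p^2}\overline h$ keeps index $p$, whereas at every closed point (and every horizontal divisor) the residue field is finite or $p$-adic, so its Brauer group is $\QQ/\ZZ$ and the unramified part is annihilated upon passing to the degree-$p^2$ residue extension, capping the local index at $p^2$. Confirming that this machinery applies here and carrying out the closed-point Nakayama estimates cleanly---in particular that each $\alpha_P$ has order dividing $p^2$, so the degree-$p^2$ extension kills it---is where the real work lies; the coprimality $\gcd(m,p)=1$ is the linchpin keeping the residual invariant of order exactly $p$ at the generic point.
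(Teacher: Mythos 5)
Your reduction to lemma~\ref{lemma:saltman-trick} and your lower-bound computation are essentially the paper's: one completes $K(X)$ at the divisorial valuation given by $\XX_s$, notes that both $\psi$ and $\chi$ are $\pi$-unramified there with $\partial_{\XX_s}(\beta)=\chi|_{k(\XX_s)}$ of order $p^2$, applies Nakayama's formula, and then computes the index of the residue class $\overline\psi\cup\delta\overline h$ over the global function field $k(\XX_s)(\chi)$ by class field theory, using that the invariants at $P_1,P_2$ have order $p^2$ before base change and order $p$ after the degree-$p$ local extension, with $\gcd(m,p)=1$ doing exactly the work you say it does. That half is fine.

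The upper bound is where your argument has a genuine gap, and it is not the one you flagged. Even granting the HHK local--global principle for the index in the wild setting, the patches attached to the component $\XX_s$ are the fields $F_U$ for affine opens $U\subset\XX_s$, not the completion $\widehat{K(X)}_{\XX_s}$; since $F_U$ is a \emph{subfield} of that completion, your computation $\ind(\beta|_{\widehat{K(X)}_{\XX_s}})=p^3$ gives a lower bound for $\ind(\beta|_{F_U})$, not the upper bound $\le p^3$ that the lcm formula requires. Separately, at a closed point $P$ the field $F_P=\Frac(\widehat{\sheaf{O}}_{\XX,P})$ is not a complete discretely valued field, so the ``Witt decomposition $\beta|_{F_P}=\alpha_P+\delta_{p^2}(\pi)\cup\chi_P$'' and the appeal to Nakayama are not well-founded as stated; controlling $p$-torsion Brauer classes over such two-dimensional complete fields of residue characteristic $p$ is precisely the part of the theory for which, as the introduction stresses, no general tools are available. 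The paper avoids all of this by producing an \emph{explicit} degree-$p^3$ splitting field $M=L(Y)$ (with $L/K$ the unramified degree-$p$ extension and $K(Y)/K(X)$ the Sekiguchi--Suwa lift of the Artin--Schreier--Witt cover), and then verifying that $M$ splits $\beta$ using the Ha\ss e principle of theorem~\ref{thm:hasse} (injectivity of $\Br(K(X))\to\prod_{x\in X_0}\Br(\widehat{K(X)}_x)$), checking splitting point by point on the normalization $Z$ of $X$ in $M$ via the local factorization $h=u't_1^p$ in the semilocal rings of lemma~\ref{lemma:indec-algebr}. To repair your proof you would either have to import and justify the patching machinery together with a genuine analysis of $\Br(F_U)$ and $\Br(F_P)$ at $p$, or replace the whole upper-bound step by the construction of a concrete splitting field as the paper does.
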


The proof of the theorem will immediately follow from
lemma~\ref{lemma:saltman-trick} once we show the next two lemmas.

\begin{lemma}
  $p^3\mid \ind\beta$ and $p^2\mid \ind (p\beta)$.
\end{lemma}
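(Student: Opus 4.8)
The plan is to compute both indices after restricting to the completion $\widehat{K(X)}$ of $K(X)$ along the special fiber $\XX_s$, where one can read off a Witt decomposition and apply Nakayama's index formula (lemma~\ref{lemma:NAK}). Note that this completion has characteristic $0$ (only its residue field $k(\XX_s)$ has characteristic $p$), so the Witt decomposition and the index formula remain available even though $p^2$ is not prime to $\fchar k(\XX_s)$; indeed $\beta|_{\widehat{K(X)}}$ is split over the maximal unramified extension and hence lies in $\Br(\widehat{K(X)})'[p^2]$.

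First I would determine the Witt decomposition of $\beta|_{\widehat{K(X)}}$ at $\XX_s$, taking $\pi$ as uniformizer. The character $\chi$ is unramified along $\XX_s$: the extension $K(Y)\supset K(X)$ reduces modulo $\pi$ to the degree $p^2$ Artin--Schreier--Witt extension $k(\XX_s)(x,y)\supset k(\XX_s)$ defined by \eqref{eq:5}--\eqref{eq:6}, so the residue degree is already $p^2$, forcing $e=1$, and $\chi$ reduces to the Artin--Schreier--Witt character $\overline\chi$ of order $p^2$. On the other hand $\psi$ is inflated from $k$ (hence $\pi$-unramified) and $v_{\XX_s}(h)=0$ since $\divisor(h)=[D_1]-[D_2]$ is horizontal, so $\psi\cup\delta(h)$ is unramified with residual class $\overline\psi\cup\delta(\overline h)\in\Br(k(\XX_s))$, where $\overline\psi$ is the order $p^2$ character cutting out the degree $p^2$ constant extension of $k(\XX_s)$ and $\overline h=\overline f_1/\overline f_2$. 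Thus $\beta|_{\widehat{K(X)}}=\overline\psi\cup\delta(\overline h)+\delta(\pi)\cup\overline\chi$, and lemma~\ref{lemma:NAK} yields $\ind\beta|_{\widehat{K(X)}}=p^2\cdot\ind\bigl(\overline\psi\cup\delta(\overline h)\bigr)|_{k(\XX_s)(x,y)}$.

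It then remains to show $p\mid\ind\bigl(\overline\psi\cup\delta(\overline h)\bigr)|_{k(\XX_s)(x,y)}$. Since $\overline\psi$ is everywhere unramified on $\XX_s$, over the global field $k(\XX_s)$ the local invariant of $\overline\psi\cup\delta(\overline h)$ at a place $w$ is $v_w(\overline h)\cdot\overline\psi(\operatorname{Frob}_w)$; as $\divisor(\overline h)=[P_1]-[P_2]$ this is supported at $P_1,P_2$, where $v_{P_i}(\overline h)=\pm1$ and $\overline\psi(\operatorname{Frob}_{P_i})$ has order $p^2$ because $\deg P_i=m$ is prime to $p$. In the extension $k(\XX_s)(x,y)$ each place above $P_1$ has $e=p$, $f=1$ (it splits completely in $k(\XX_s)(x)$ and then ramifies totally), so restriction multiplies the local invariant at such $w$ by $ef=p$, giving an invariant of order $p$. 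Hence this restricted class has index divisible by $p$, and combining with the above, $p^3\mid\ind\beta|_{\widehat{K(X)}}\mid\ind\beta$. For $p\beta=p\psi\cup\delta(h)+p\chi\cup\delta(\pi)$ I would run the identical argument one level down: $p\chi$ is $\XX_s$-unramified with residue the order $p$ Artin--Schreier character $p\overline\chi$ cutting out $k(\XX_s)(x)\supset k(\XX_s)$, and $p\psi$ is the order $p$ constant character, so lemma~\ref{lemma:NAK} gives $\ind(p\beta)|_{\widehat{K(X)}}=p\cdot\ind\bigl(p\psi\cup\delta(\overline h)\bigr)|_{k(\XX_s)(x)}$; since $P_1$ splits completely in $k(\XX_s)(x)$ (so $e=f=1$) the local invariant at a place above $P_1$ persists with order $p$, whence $p^2\mid\ind(p\beta)|_{\widehat{K(X)}}\mid\ind(p\beta)$.

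The main obstacle I anticipate is justifying the Witt/Nakayama computation in this wild setting: verifying that $\chi$ is genuinely $\XX_s$-unramified with residue the \emph{full} order $p^2$ Artin--Schreier--Witt character (rather than a proper quotient), and tracking the ramification indices in the Artin--Schreier--Witt extension correctly, so that the local invariants over $P_1$ retain order $p$ after restriction despite the multiplication by $ef$.
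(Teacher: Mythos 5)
Your proof is correct and follows essentially the same route as the paper: restrict to the completion of $K(X)$ at the special fiber $\XX_s$, apply Nakayama's index formula with the unramified part $\psi\cup\delta h$ and the ramified part $\chi\cup\delta\pi$, and then compute the local invariants of the residual class over the global field $k(\XX_s)$ at $P_1,P_2$ using the splitting/ramification behaviour of these places in the Artin--Schreier--Witt tower. The only difference is cosmetic: you spell out explicitly the check that $\beta|_{\widehat{K(X)}}$ is inertially split and that $\chi$ is $\XX_s$-unramified with residue the full order $p^2$ character, points the paper passes over more quickly.
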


\begin{proof}
  Let $\widehat{K(X)}$ be the completion of $K(X)$ with respect to the
  discrete valuation defined by the special fiber $\XX_s$.  Notice
  that $\widehat{K(X)}$ has uniformizer $\pi$ and residue field
  $k(\XX_s)$ (the function field of $\XX_s$), a global field of
  characteristic $p$.  Our goal is to show that the restriction of
  $\beta$ to $\widehat{K(X)}$ has index $p^3$.

  Since the compositum of $\widehat{K(X)}(\psi)$ and
  $\widehat{K(X)}(\chi)$ splits $\beta$, and both define
  $\pi$-unramified extensions of $\widehat{K(X)}$, we may apply
  lemma~\ref{lemma:NAK} in order to compute
  $$\ind \beta|_{\widehat{K(X)}}
  = |\chi| \cdot \ind (\psi\cup \delta h)|_{k(\XX_s)(\chi)} = p^2
  \cdot \ind (\psi\cup \delta h)|_{k(\XX_s)(\chi)}
  $$
  Let $\alpha = (\psi\cup \delta h)|_{k(\XX_s)} \in \Br(k(\XX_s))$.
  We must show that $\ind \alpha|_{k(\XX_s)(\chi)} = p$, and for that
  we need to compute the local invariants of
  $\alpha|_{k(\XX_s)(\chi)}$.  Since $\psi$ defines an unramified
  cover of $\XX_s$, and
  $\divisor(\overline h) = [P_1] - [P_2]\in \Div(\XX_s)$ with
  $m = [\kappa(P_i): k]$ prime to $p$ (where
  $\overline h = \overline f_1/\overline f_2$), $\alpha$ has
  nontrivial invariants $\pm 1/p^2\in \QQ/\ZZ$ only at $P_1$ and
  $P_2$.  On the other hand, by construction
  $k(\XX_s)(\chi)\supset k(\XX_s)$ is the Artin-Schreier-Witt
  extension given by equations \eqref{eq:5} and \eqref{eq:6}, which
  defines local extensions of degree $p$ at the completions of
  $k(\XX_s)$ at $P_1$ and $P_2$.  Therefore $\alpha|_{k(\XX_s)(\chi)}$
  will have local invariants $\pm 1/p\in \QQ/\ZZ$ at those places, and
  thus its index is $p$, as desired.  A similar argument shows that
  $\ind (p\beta)|_{\widehat{K(X)}} = p^2$.
\end{proof}

For the next and final lemma, we make some preliminary remarks.
First, for any $x\in X_0$ denote by $R_x$ the ring of integers of the
$p$-adic field $\kappa(x)$.  Then by the valuative criterion of
properness the inclusion $x = \Spec \kappa(x) \into X \subset \XX$
extends uniquely to an $R_x$-valued point $\Spec R_x \into \XX$ (the
normalization of the horizontal divisor
$\overline{\{ x \}}\subset \XX$, see
lemma~\ref{lemma:integral-models}(ii)).

Second, let $M\supset K(X)$ be a field extension, let $Z$ be the
$p$-adic curve obtained by normalizing $X$ in $M$, and $Z \to X$ be
the natural map induced by the inclusion of function fields
$K(X) \into M = K(Z)$.  By corollary~\ref{thm:hasse}, to show that $M$
splits some $\gamma\in \Br(K(X))$, we can proceed locally, showing
that for every closed point $z\in Z_0$ with image $x\in X_0$ the
completion $\widehat{K(Z)}_z$ of $K(Z)$ at $z$ splits
$\gamma|_{\widehat{K(X)}_x}$.

Third, let $x\in X_0$ and $z\in Z_0$ be a closed point lying over $x$
(with respect to the map $Z\to X$).  Let
$D_x \df= \overline{\{ x \}}\subset \XX$ be horizontal divisor
associated to $x$ (see lemma~\ref{lemma:integral-models}(ii)), write
$P_x\in \XX_s$ for the closed point of $D_x$, let
$A = \sheaf{O}_{\XX, P_x}$ and $\ideal{p}\in\Spec A$ be the height~$1$
prime ideal which is the generic point of $D_x = \Spec A/\ideal{p}$.
Let $C\subset K(Z)$ be the normalization of $A$ in $K(Z)$.  We claim
that $\sheaf{O}_{Z,z} = C_{\ideal{q}}$ for some height~$1$ prime ideal
$\ideal{q}\in \Spec C$, which is contained in a single maximal ideal
of $C$.  In fact, since $\sheaf{O}_{Z,z}$ is a localization of the
normalization of $\sheaf{O}_{X,x}$ in $K(Z)$, we have the following
inclusions of rings, and corresponding maps of spectra
\begin{center}
  \begin{tikzcd}
    C\arrow[hook]{r} &  \sheaf{O}_{Z,z}\\
    A\arrow[hook]{r} \arrow[hook]{u} &  \sheaf{O}_{X,x} \arrow[hook]{u} \arrow[equal]{r}& A_{\ideal{p}}
  \end{tikzcd}
  \qquad\qquad
  \begin{tikzcd}
\ideal{q}\arrow[mapsto]{d}& \Spec C \arrow{d} &  \Spec \sheaf{O}_{Z,z}\arrow{d}\arrow{l}\\
\ideal{p}&\Spec A &  \Spec \sheaf{O}_{X,x}  \arrow{l}
  \end{tikzcd}
\end{center}
Let $\ideal{q}\in\Spec C$ be the image of the closed point of
$\Spec \sheaf{O}_{Z, z}$, so that $\ideal{q}$ lies over $\ideal{p}$.
Since $\ideal{p}$ has height~$1$ and $A \into C$ is integral with $A$
normal, by the going-up and going-down theorems $\ideal{q}$ also has
height $1$, thus $C_{\ideal{q}}$ is a discrete valuation ring (being a
local noetherian normal domain of $\dim C_{\ideal{q}}=1$).  Then the
inclusion $C\into \sheaf{O}_{Z,z}$ induces a local injective map
$C_{\ideal{q}} \into \sheaf{O}_{Z,z}$, which must be an equality since
discrete valuation rings are maximal with respect to domination of
valuation rings (\cite{Matsumura}, exercise 10.5, p.77).  Finally, we
have to check that $\ideal{q}$ is contained in a single maximal ideal
of $C$, i.e., that $C/\ideal{q}$ is a local domain.  Consider the
inclusion $A/\ideal{p} \into C/\ideal{q}$, a finite extension of
domains.  Since $D_x = \Spec A/\ideal{p}$ is a horizontal divisor,
$\Spec A/\ideal{p}$ is finite (and flat) over $\Spec R$
(lemma~\ref{lemma:integral-models}(i)), hence
$\Spec C/\ideal{q}\to \Spec R$ is finite with $R$ henselian.  Now the
result follows from \cite{Milne}, theorem~I.4.2(b), p.32.

\begin{lemma}
  Let $L\supset K$ be the $\pi$-unramified extension of degree $p$.
  Let $M = L(Y)$ be the compositum of $L$ and $K(Y)$.  Then
  $[M:K(X)] = p^3$ and $M$ splits $\beta$, thus $\ind\beta$ is at most
  $p^3$.  Similarly $\ind (p\beta)$ is at most $p^2$.
\end{lemma}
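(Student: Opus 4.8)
The plan is to establish the three assertions in turn: the degree $[M:K(X)]=p^3$, the splitting of $\beta$ by $M$, and the bound $\ind(p\beta)\le p^2$. Throughout I would exploit that $M$ is built from two ``orthogonal'' pieces: the constant (everywhere unramified) extension $L/K$, which lowers the order of $\psi$, and the geometric cover $Y/X$, which supplies ramification.

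For the degree I argue by linear disjointness. Since $X$ is geometrically connected, $K(X)/K$ is regular, so $L(X)=L\tensor_K K(X)$ is a field with $[L(X):K(X)]=[L:K]=p$. As $K(Y)/K(X)$ is cyclic of degree $p^2$, with unique intermediate field $K(W)$, it suffices to show $L(X)\ne K(W)$. But $L(X)/K(X)$ is a constant-field extension, hence unramified at every closed point of $X$, whereas $K(W)/K(X)$ is the degree-$p$ Kummer extension attached by \eqref{eq:4} to $1+f\lambda^p$; since $f$ has a double pole along $D_\infty$ and $p$ is odd, this extension is ramified at the place of $X$ given by $D_\infty$. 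Therefore $L(X)\cap K(Y)=K(X)$, and $[M:K(X)]=[L(X):K(X)]\cdot[K(Y):K(X)]=p^3$.

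Next, since $M\supseteq K(Y)$ and $K(Y)$ is the field cut out by $\chi$, we have $\chi|_M=0$, whence $(\chi\cup\delta_{p^2}\pi)|_M=0$ and $\beta|_M=(\psi\cup\delta_{p^2} h)|_M$; so it remains to split the symbol $\psi\cup\delta h$. For this I would apply the Ha\ss e principle (theorem~\ref{thm:hasse}) to the $p$-adic curve $Z$ obtained by normalizing $X$ in $M$: it suffices to verify $(\psi\cup\delta h)|_{\widehat{K(Z)}_z}=0$ for every $z\in Z_0$, working inside the $2$-dimensional local rings $\sheaf{O}_{\XX,P_x}$ and their normalizations as in the preliminary remarks. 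Fix such a $z$, lying over $x\in X_0$ with special point $P_x\in\XX_s$. Because $\psi$ is $\pi$-unramified, $\psi|_{\kappa(z)}$ is an unramified character of the $p$-adic field $\kappa(z)$, and its Witt decomposition shows that $(\psi\cup\delta h)|_z=0$ precisely when both the residue $v_z(h)\cdot\psi|_{\kappa(z)}$ and the constant part $\psi|_{\kappa(z)}\cup\delta(\overline u)$ vanish (where $u$ is the unit part of $h$ and $\overline u$ its image in $\kappa(z)$); equivalently, when $d_z:=|\psi|_{\kappa(z)}|$ divides both $v_z(h)$ and $w_{\kappa(z)}(\overline u)$. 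The decisive point is that $M\supseteq L$ forces the residue field of $\kappa(z)$ to contain that of $L$, so $p$ divides the residue degree of $\kappa(z)/K$ and hence $d_z\mid p$. Away from $P_1,P_2$ the function $h$ is a unit of $\sheaf{O}_{\XX,P_x}$, so both divisibilities are automatic; over $P_1,P_2$ one combines lemma~\ref{lemma:indec-algebr} with the explicit radicands of \eqref{eq:4} and \eqref{eq:7} to see that at any horizontal divisor where $v_z(h)$ or $w_{\kappa(z)}(\overline u)$ fails to be divisible by $p$, the second layer $Y\to W$ is correspondingly ramified (or else $d_z$ drops to $1$), so the local class dies.

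I expect this last step to be the main obstacle: tracking, via Hensel's lemma applied to the Sekiguchi--Suwa equations, exactly how the horizontal divisors through $P_1,P_2$ split in $W$ and ramify in $Y$, and matching this ramification against the order drop of $\psi$ supplied by $L$, so that both the residue and the constant component of each local symbol vanish simultaneously. Finally, for the bound on $p\beta$ I would not use $M$ but its degree-$p^2$ subextension $L(W)$: over $K(W)$ we have $p\chi=0$, while over $L$ we have $p\psi=0$ (as $p\psi$ is exactly the character cutting out $L$), so $(p\psi\cup\delta h)|_{L(W)}=0$ and $(p\chi\cup\delta\pi)|_{L(W)}=0$, giving $p\beta|_{L(W)}=0$. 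Since $L(X)\ne K(W)$ yields $[L(W):K(X)]=p^2$, this proves $\ind(p\beta)\le p^2$, furnishing the remaining input for lemma~\ref{lemma:saltman-trick}.
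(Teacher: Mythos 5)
Your overall architecture matches the paper's: linear disjointness for the degree count, then the Ha\ss e principle (theorem~\ref{thm:hasse}) reducing the splitting of $\gamma=\psi\cup\delta h$ to a local computation at each $z\in Z_0$, with the constant extension $L$ forcing the order $d_z$ of $\psi|_{\kappa(z)}$ to divide $p$. Two of your variations are fine and even cleaner than the paper's: the paper gets linear disjointness of $L$ and $K(Y)$ from the fact that the residue extensions of $\Spec B_1\to \Spec \sheaf{O}_{\XX,P_1}$ over $P_1$ are trivial, whereas you use ramification of $K(W)/K(X)$ at $D_\infty$ (valid, since $v_{D_\infty}(1+f\lambda^p)=-2$ is prime to $p$); and your argument that $L(W)$ splits $p\beta$ is a complete proof of the last assertion, which the paper dismisses with ``similarly''.

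The gap is in the only case carrying real content: $z$ lying over $x$ with $P_x\in\{P_1,P_2\}$. Your local criterion ($d_z$ must divide both $v_z(h)$ and the $\kappa(z)$-valuation of the reduced unit part) is correct, but the sentence ``at any horizontal divisor where divisibility fails, the second layer $Y\to W$ is correspondingly ramified (or else $d_z$ drops to $1$), so the local class dies'' is not an argument: ramification of $Y\to W$ does not by itself kill a Brauer class, and $d_z$ never drops to $1$ here. What closes the case is lemma~\ref{lemma:indec-algebr}(ii)--(iii) applied to $B_1$, the normalization of $\sheaf{O}_{\XX,P_1}$ in $K(Y)$: since each $D'_{1j}$ is totally ramified in $\Spec B_1$ and $h$ differs from $f$ by a unit of $\sheaf{O}_{\XX,P_1}$, one can write $h=u\,t_1^p\cdots t_p^p$ with $u\in B_1^\unit$ and $t_j$ the primes over $D_1$; localizing at the unique maximal ideal $\ideal{m}_1$ containing the height-one prime defining $y$, this becomes $h=u't_1^p$ with $u'\in (B_1)_{\ideal{m}_1}^\unit$. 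Hence $\gamma|_M=(p\psi)|_M\cup\delta t_1+(\psi\cup\delta u')|_M=(\psi\cup\delta u')|_M$ because $(p\psi)|_L=0$, and the surviving term vanishes in every $\widehat{K(Z)}_z$ over $P_1$ by exactly your ``unit of the two-dimensional local ring'' argument from the easy case. In your language: $p\mid v_z(h)$ at all these places and the unit part reduces to a genuine unit of the valuation ring of $\kappa(z)$, so both divisibilities follow from $d_z\mid p$. Note that you need the two-dimensional statement that $u'$ is a unit of the semilocal ring, not merely that $v_z(h)\equiv 0\pmod p$: the ``constant part'' of your criterion is precisely where a one-dimensional ramification count would leave you stuck.
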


\begin{proof}
  First observe that $L$ and $K(Y)$ are linearly disjoint over $K(X)$
  (for instance, by lemma~\ref{lemma:indec-algebr} the maximal ideals
  of $\Spec B_1$ lying over $P_1$ define trivial residue field
  extensions in $\Spec B_1\to \Spec \sheaf{O}_{\XX, P_1}$).  Hence
  $M = L(Y)\supset K(X)$ is a degree $p^3$ extension, and since it
  visibly splits $\chi\cup \delta \pi$, we must show that it splits
  the remaining part $\gamma = \psi\cup \delta h$.

  Let $Z$ be the normalization of $X$ in $M$.  The inclusions
  $M \supset K(Y) \supset K(X)$ define maps of $p$-adic curves
  $Z \to Y \to X$.  Fix a closed point $z\in Z_0$, and let $y\in Y_0$
  and $x\in X_0$ be its images.  Our goal is to show that
  $\widehat{K(Z)}_z$ splits $\gamma|_{\widehat{K(X)}_x}$.  Let
  $P_x \in \XX_s$ be the closed point of the horizontal divisor
  $\overline{\{x\}}$.  There are a few cases to consider:
  \begin{enumerate}[(i)]
  \item $P_x\ne P_1, P_2$.  Then $h$ is a unit in
    $\sheaf{O}_{\XX, P_x}$, thus also in $\sheaf{O}_{X, x}$.  Since
    $\psi$ is also unramified with respect to $x$,
    $\gamma|_{\widehat{K(X)}_x}\in \Br(\kappa(x))$.  But
    $\gamma|_{\widehat{K(X)}_x} = (\psi \cup \delta h)|_{\kappa(x)}$
    is already trivial in $\Br(\kappa(x))$, since $\psi$ is also
    $\pi$-unramified and $h\in \sheaf{O}_{\XX, P_x}^\unit$ implies
    that the image of $h$ in $R_x$ (the valuation ring of $\kappa(x)$,
    a normalization of a quotient of $\sheaf{O}_{\XX, P_x}$) is also a
    unit.

  \item $P_x = P_1$ (or the analogous case $P_x = P_2$).  Write
    $B \subset K(Y)$ and $C\subset M$ for the normalizations of
    $A = \sheaf{O}_{\XX, P_1}$ in $K(Y)$ and $M = K(Z)$, respectively.
    By lemma~\ref{lemma:indec-algebr}, $B$ is a $2$-dimensional
    regular semilocal ring with $p$ maximal ideals
    $\ideal{m}_1, \ldots, \ideal{m}_p$.  Moreover, since both $f$ and
    $h$ are local parameters of $D_1$ in $\Spec A$, by the same lemma
    we may write $h = u t_1^p \ldots t_p^p$ where $u\in B^\unit$ and
    $t_i\in B$ are prime elements defining the irreducible components
    of $D_1 \times_{\XX} \Spec B$, with $t_i \in \ideal{m}_i$ and
    $t_i\notin \ideal{m}_j$ for $i\ne j$.  By the above remarks,
    $\sheaf{O}_{Y, y} = B_{\ideal{q}}$ for some height~$1$ prime ideal
    $\ideal{q} \in \Spec B$, say contained in $\ideal{m}_1$ (and not
    in any other $\ideal{m}_i$ with $i\ne 1$).  In $B_{\ideal{m}_1}$,
    $h = u't_1^p$ with $u' \in B_{\ideal{m}_1}^\unit$.  Therefore we
    may write
    $$\gamma|_M = (\psi \cup \delta h)|_M = (p\psi)|_M \cup \delta t_1 + (\psi\cup \delta u')|_M
    = (\psi\cup \delta u')|_M
    $$
    since $\psi|_L$ has order $p$ and thus $(p\psi)|_M = 0$.  But
    $u' \in B_{\ideal{m}_1}^\unit\implies u' \in \sheaf{O}_{Y,
      y}^\unit$, hence the same argument of (i) shows that
    $(\psi\cup \delta u')|_{\widehat{K(Y)}_y} = (\psi\cup \delta
    u')|_{\kappa(y)} = 0$.  Therefore
    $\gamma|_{\widehat{K(Z)}_z} = (\psi\cup \delta
    u')|_{\widehat{K(Z)}_z} = 0$ as well.
  \end{enumerate}
\end{proof}


\bibliographystyle{alpha} 
\bibliography{standardbib}

\def\cprime{$'$}
\begin{thebibliography}{SVdB92}

\bibitem[AG60]{AG}
Maurice Auslander and Oscar Goldman.
\newblock The {B}rauer group of a commutative ring.
\newblock {\em Trans. Amer. Math. Soc.}, 97:367--409, 1960.

\bibitem[ART79]{ART}
S.~A. Amitsur, L.~H. Rowen, and J.-P. Tignol.
\newblock Division algebras of degree {$4$} and {$8$} with involution.
\newblock {\em Israel J. Math.}, 33(2):133--148, 1979.

\bibitem[BMT11]{BMT}
E.~Brussel, K.~McKinnie, and E.~Tengan.
\newblock Indecomposable and noncrossed product division algebras over function
  fields of smooth {$p$}-adic curves.
\newblock {\em Adv. Math.}, 226(5):4316--4337, 2011.

\bibitem[BMT16]{BMT2}
Eric Brussel, Kelly McKinnie, and Eduardo Tengan.
\newblock Cyclic length in the tame {B}rauer group of the function field of a
  {$p$}-adic curve.
\newblock {\em Amer. J. Math.}, 138(2):251--286, 2016.

\bibitem[Bru96]{Brussel6}
Eric~S. Brussel.
\newblock Decomposability and embeddability of discretely {H}enselian division
  algebras.
\newblock {\em Israel J. Math.}, 96(, part A):141--183, 1996.

\bibitem[Bru10]{BrusselSaltman}
Eric Brussel.
\newblock On {S}altman's {$p$}-adic curves papers.
\newblock In {\em Quadratic forms, linear algebraic groups, and cohomology},
  volume~18 of {\em Dev. Math.}, pages 13--39. Springer, New York, 2010.

\bibitem[CS86]{CornellSilverman}
G.~Cornell and J.H. Silverman.
\newblock {\em Arithmetic geometry}.
\newblock Springer-Verlag, 1986.

\bibitem[Ful98]{Fulton}
William Fulton.
\newblock {\em Intersection theory}, volume~2 of {\em Ergebnisse der Mathematik
  und ihrer Grenzgebiete. 3. Folge. A Series of Modern Surveys in Mathematics
  [Results in Mathematics and Related Areas. 3rd Series. A Series of Modern
  Surveys in Mathematics]}.
\newblock Springer-Verlag, Berlin, second edition, 1998.

\bibitem[GMS03]{GMS}
Skip Garibaldi, Alexander Merkurjev, and Jean-Pierre Serre.
\newblock {\em Cohomological invariants in {G}alois cohomology}, volume~28 of
  {\em University Lecture Series}.
\newblock American Mathematical Society, Providence, RI, 2003.

\bibitem[Gro68]{GB}
Alexander Grothendieck.
\newblock Le groupe de {B}rauer. {III}. {E}xemples et compl\'ements.
\newblock In {\em Dix {E}xpos\'es sur la {C}ohomologie des {S}ch\'emas}, pages
  88--188. North-Holland, Amsterdam, 1968.

\bibitem[HH10]{HarbaterHartmann}
David Harbater and Julia Hartmann.
\newblock Patching over fields.
\newblock {\em Israel J. Math.}, 176:61--107, 2010.

\bibitem[HHK09]{HHK}
David Harbater, Julia Hartmann, and Daniel Krashen.
\newblock Applications of patching to quadratic forms and central simple
  algebras.
\newblock {\em Invent. Math.}, 178(2):231--263, 2009.

\bibitem[Hid12]{Hida}
Haruzo Hida.
\newblock {\em Geometric modular forms and elliptic curves}.
\newblock World Scientific Publishing Co. Pte. Ltd., Hackensack, NJ, second
  edition, 2012.

\bibitem[HKT08]{HKT}
J.~W.~P. Hirschfeld, G.~Korchm\'{a}ros, and F.~Torres.
\newblock {\em Algebraic curves over a finite field}.
\newblock Princeton Series in Applied Mathematics. Princeton University Press,
  Princeton, NJ, 2008.

\bibitem[Jac91]{J_2}
Bill Jacob.
\newblock Indecomposable division algebras of prime exponent.
\newblock {\em J. Reine Angew. Math.}, 413:181--197, 1991.

\bibitem[JW90]{JW}
Bill Jacob and Adrian Wadsworth.
\newblock Division algebras over {H}enselian fields.
\newblock {\em J. Algebra}, 128(1):126--179, 1990.

\bibitem[Kar98]{Karpenko}
Nikita~A. Karpenko.
\newblock Codimension {$2$} cycles on {S}everi-{B}rauer varieties.
\newblock {\em $K$-Theory}, 13(4):305--330, 1998.

\bibitem[Kat86]{KatoHP}
Kazuya Kato.
\newblock A {H}asse principle for two-dimensional global fields.
\newblock {\em J. Reine Angew. Math.}, 366:142--183, 1986.
\newblock With an appendix by Jean-Louis Colliot-Th\'el\`ene.

\bibitem[Lic69]{Lichtenbaum}
Stephen Lichtenbaum.
\newblock Duality theorems for curves over {$p$}-adic fields.
\newblock {\em Invent. Math.}, 7:120--136, 1969.

\bibitem[Lip78]{Lipman}
Joseph Lipman.
\newblock Desingularization of two-dimensional schemes.
\newblock {\em Annals of Mathematics}, 107(2):151--207, 1978.

\bibitem[Liu02]{Liu}
Qing Liu.
\newblock {\em Algebraic geometry and arithmetic curves}, volume~6 of {\em
  Oxford Graduate Texts in Mathematics}.
\newblock Oxford University Press, Oxford, 2002.
\newblock Translated from the French by Reinie Ern{\'e}, Oxford Science
  Publications.

\bibitem[Mat55]{Mattuck}
Arthur Mattuck.
\newblock Abelian varieties over p-adic ground fields.
\newblock {\em Annals of Mathematics}, 62(1):92--119, 1955.

\bibitem[Mat89]{Matsumura}
Hideyuki Matsumura.
\newblock {\em Commutative ring theory}, volume~8 of {\em Cambridge Studies in
  Advanced Mathematics}.
\newblock Cambridge University Press, Cambridge, second edition, 1989.
\newblock Translated from the Japanese by M. Reid.

\bibitem[McK08]{McKinnie}
Kelly McKinnie.
\newblock Indecomposable {$p$}-algebras and {G}alois subfields in generic
  abelian crossed products.
\newblock {\em J. Algebra}, 320(5):1887--1907, 2008.

\bibitem[Mil80]{Milne}
James~S. Milne.
\newblock {\em \'{E}tale cohomology}, volume~33 of {\em Princeton Mathematical
  Series}.
\newblock Princeton University Press, Princeton, N.J., 1980.

\bibitem[Mil86]{MilneAD}
J.~S. Milne.
\newblock {\em Arithmetic duality theorems}, volume~1 of {\em Perspectives in
  Mathematics}.
\newblock Academic Press, Inc., Boston, MA, 1986.

\bibitem[NS13]{Neukirch}
J.~Neukirch and N.~Schappacher.
\newblock {\em Algebraic Number Theory}.
\newblock Grundlehren der mathematischen Wissenschaften. Springer Berlin
  Heidelberg, 2013.

\bibitem[NSW00]{NeukirchCNF}
J{\"u}rgen Neukirch, Alexander Schmidt, and Kay Wingberg.
\newblock {\em Cohomology of number fields}, volume 323 of {\em Grundlehren der
  Mathematischen Wissenschaften [Fundamental Principles of Mathematical
  Sciences]}.
\newblock Springer-Verlag, Berlin, 2000.

\bibitem[Pop88]{Pop}
Florian Pop.
\newblock Galoissche kennzeichnung p-adisch abgeschlossener k{\"o}rper.
\newblock {\em Journal f{\"u}r die reine und angewandte Mathematik},
  392:145--175, 1988.

\bibitem[PS14]{ParimalaSuresh}
R.~Parimala and V.~Suresh.
\newblock Period-index and {$u$}-invariant questions for function fields over
  complete discretely valued fields.
\newblock {\em Invent. Math.}, 197(1):215--235, 2014.

\bibitem[Sai86]{Saito2}
Shuji Saito.
\newblock Arithmetic on two-dimensional local rings.
\newblock {\em Invent. Math.}, 85(2):379--414, 1986.

\bibitem[Sal79]{Saltman-Indecomposable}
David~J. Saltman.
\newblock Indecomposable division algebras.
\newblock {\em Comm. Algebra}, 7(8):791--817, 1979.

\bibitem[Sal97]{Saltman-p-adic}
David~J. Saltman.
\newblock Division algebras over {$p$}-adic curves.
\newblock {\em J. Ramanujan Math. Soc.}, 12(1):25--47, 1997.

\bibitem[Sal98]{Saltman-p-adic-c}
David~J. Saltman.
\newblock Correction to: ``{D}ivision algebras over {$p$}-adic curves'' [{J}.
  {R}amanujan {M}ath. {S}oc. {\bf 12} (1997), no. 1, 25--47; {MR}1462850
  (98d:16032)].
\newblock {\em J. Ramanujan Math. Soc.}, 13(2):125--129, 1998.

\bibitem[Sal07]{Saltman-cyclic}
David~J. Saltman.
\newblock Cyclic algebras over {$p$}-adic curves.
\newblock {\em J. Algebra}, 314(2):817--843, 2007.

\bibitem[Sal08]{Sa08}
David~J. Saltman.
\newblock Division algebras over surfaces.
\newblock {\em J. Algebra}, 320(4):1543--1585, 2008.

\bibitem[Ser79]{SerreLF}
Jean-Pierre Serre.
\newblock {\em Local fields}, volume~67 of {\em Graduate Texts in Mathematics}.
\newblock Springer-Verlag, New York, 1979.
\newblock Translated from the French by Marvin Jay Greenberg.

\bibitem[Sil94]{SilvermanATAEC}
J.H. Silverman.
\newblock {\em Advanced topics in the arithmetic of elliptic curves}.
\newblock Graduate texts in mathematics. Springer-Verlag, 1994.

\bibitem[SS01]{SekiguchiSuwa}
Tsutomu Sekiguchi and Noriyuki Suwa.
\newblock A note on extensions of algebraic and formal groups. {IV}.
  {K}ummer-{A}rtin-{S}chreier-{W}itt theory of degree {$p^2$}.
\newblock {\em Tohoku Math. J. (2)}, 53(2):203--240, 2001.

\bibitem[{Sta}18]{stacks-project}
The {Stacks Project Authors}.
\newblock \textit{Stacks Project}.
\newblock \url{https://stacks.math.columbia.edu}, 2018.

\bibitem[SVdB92]{SvdB}
Aidan Schofield and Michel Van~den Bergh.
\newblock The index of a {B}rauer class on a {B}rauer-{S}everi variety.
\newblock {\em Trans. Amer. Math. Soc.}, 333(2):729--739, 1992.

\bibitem[Tig87]{Tignol}
J.-P. Tignol.
\newblock Alg\`ebres ind\'ecomposables d'exposant premier.
\newblock {\em Adv. in Math.}, 65(3):205--228, 1987.

\end{thebibliography}


\end{document}